\newtheorem{theorem} {Theorem}[section]
\newtheorem{lemma}[theorem]{Lemma}
\newtheorem{proposition}[theorem]{Proposition}
\newtheorem{remark}[theorem]{Remark}
\newtheorem{definition}[theorem]{Definition}
\numberwithin{equation}{section}
\newcommand{\tr}{\operatorname{tr}}
\newcommand{\dd}{\operatorname{d}\!}
\newcommand{\diver}{\operatorname{div}}
\newcommand{\curl}{\operatorname{curl}}
\newcommand{\n}{\bm{n}}
\newcommand{\e}{\bm{e}}
\newcommand{\normal}{\bm{\nu}}
\newcommand{\free}{\mathscr{F}}
\newcommand{\freetg}{\mathscr{F}_{tg}}
\newcommand{\p}{\bm{p}}
\newcommand{\vv}{\bm{v}}
\newcommand{\uu}{\bm u}
\newcommand{\As}{{\mathcal A}}
\newcommand{\Astg}{{\mathcal A}^{\rm tg}}
\newcommand{\hs}{{\mathcal H}}
\newcommand{\leb}{{\mathcal L}}
\newcommand{\ms}{{\mathcal M}}
\newcommand{\ns}{{\mathcal N}}
\newcommand{\Ns}{{\mathcal N}}
\newcommand{\Nstg}{{\mathcal N}_{tg}}
\newcommand{\hd}{{\mathcal{H}}^{d-1}}
\newcommand{\hdue}{{\mathcal{H}}^2}
\newcommand{\Om}{\Omega}
\newcommand{\Omb}{\overline{\Omega}}
\newcommand{\eps}{\varepsilon}
\newcommand{\weak}{\rightharpoonup}
\newcommand{\weakst}{\stackrel{\ast}{\rightharpoonup}}
\newcommand{\tsub}{\;\widetilde{\subseteq}\;}
\newcommand{\teq}{\;\widetilde{=}\;}
\definecolor{verde}{RGB}{20,150,100}
\newcommand{\R}{{\mathbb R}}
\newcommand{\N}{{\mathbb N}}
\newcommand{\Mtre}{M^{3\times 3}}
\begin{document}
\title{A shape optimization problem for nematic and cholesteric liquid crystal drops}

\author[A. Giacomini]
{Alessandro Giacomini}
\address[Alessandro Giacomini]{DICATAM, Sezione di Matematica, Universit\`a degli Studi di Brescia, Via Branze 43, 25133 Brescia, Italy}
\email[A. Giacomini]{alessandro.giacomini@unibs.it}

\author[S. Paparini]{Silvia Paparini}
\address[Silvia Paparini]{Dipartimento di Matematica, Università di Pavia, Via Ferrata 5, 27100 Pavia, Italy}
\email[S. Paparini]{silvia.paparini@unipv.it}

\begin{abstract}
We generalize the shape optimization problem for the existence of stable equilibrium configurations of nematic and cholesteric liquid crystal drops surrounded by an isotropic solution to include a broader family of admissible domains with inner boundaries, allowing discontinuities in the director field across them.
Within this setting, we prove the existence of optimal configurations under a volume constraint and show that the minimization problem is a natural generalization of that posed for regular domains.
\vskip10pt\noindent  \textsc{Keywords}: Liquid crystals, shape optimization, sets of finite perimeter, functions of bounded variations. 
\vskip10pt\noindent  \textsc{2020 Mathematics Subject Classification}: 49J45, 26A45, 28A75.
\end{abstract}
\maketitle
\tableofcontents

\section{Introduction}
\label{sec:intr}
Liquid crystal phases are states of matter intermediate between crystalline solids and isotropic fluids, categorized as thermotropic or lyotropic based on whether temperature or concentration, respectively, drives their formation. These phases are birefringent like crystals and flow like liquids. There are three main liquid crystal phases, nematics, cholesterics and smectics. In the nematic phase the molecules have orientational order but no positional order, so that the mean orientation of the long axis of the molecules (or molecular assemblies) at the point $x$ can be represented by a unit vector $\n = \n(x)$ called the director. In the cholesteric (or chiral nematic) phase the molecules form a helical structure with an axis perpendicular to the local director. The smectic phases have orientational and some positional order. We restrict attention to only nematic and cholesteric phases.
\par
The classical elastic theory of liquid crystals goes back to the pioneering works of Oseen \cite{oseen:theory} and Frank \cite{frank:theory}. This theory is variational in nature, and features the director $\n$ as the only mesoscopic descriptor of local molecular order. The spatial distortion of a director field $\n$ is measured by its gradient $\nabla\n$.
Here we study a free-boundary problem, where a given quantity of nematic liquid crystal occupying a region $\Om\subset\mathbb{R}^3$ of volume $m>0$ (we treat liquid crystals as incompressible fluids) is surrounded by an isotropic fluid (which could well be its own melt) and can take on any desired shape. 
The energetic behaviour is described by
\begin{equation}
\label{eq:free-intr}
\free[\Om,\n]=\int_\Om W(\n,\nabla \n)\,d x+\int_{\partial\Om}g(\n,\normal)\,d\hdue,
\end{equation}
where $\normal$ denotes the exterior normal, and $\hdue$ denotes the two-dimensional Hausdorff measure which coincides with the usual area measure for sufficiently regular sets. 
\par
The volume integral, the {\it elastic free energy} of the configuration, measures the distortional cost produced by a deviation from a natural state, that is the director configuration (uniform for nematics, helicoidal for cholesterics) into which the liquid crystal naturally relaxes when neither external actions nor anchoring conditions affect its orientation.
The free-energy density $W(\n,\nabla\n)$ is chosen to be the most general frame-indifferent, even function quadratic in $\nabla\n$,
\begin{multline}
\label{eq:fb-intr}
W(\n,\nabla\n)=\frac{1}{2}K_{11}\left(\diver\n\right)^2+\frac{1}{2}K_{22}\left(\n\cdot\curl\n+q_0\right)^2+ \frac{1}{2}K_{33}|\n\times\curl\n|^{2}\\
+ \frac{1}{2}K_{24}\left[\tr(\nabla\n)^{2}-(\diver\n)^{2}\right].
\end{multline}
Here $q_0\in\R$ is the natural twist (zero for nematics, different from zero for cholesterics), while $K_{11}$, $K_{22}$, $K_{33}$, and $K_{24}$ are elastic constants characteristic of the material. They are often referred to as the splay, twist, bend, and saddle-splay constants, respectively, by the features of the different orientation fields, each with a distortion energy proportional to a single term in Eq. \eqref{eq:fb-intr} (see, for example, Ch. 3 of Ref. \cite{virga:variational}).
Throughout this paper, we assume  that the elastic constants in \eqref{eq:fb-intr} satisfy Ericksen’s inequalities\footnote{When $q_0=0$, these inequalities ensure that the cost associated with a distortion from a natural state is never
negative.} \cite{ericksen:inequalities}:
\begin{equation}
\label{eq:eineq}
(K_{11}-K_{24})>0, \quad (K_{22}-K_{24})>0, \quad K_{33}>0, \quad K_{24}>0.
\end{equation}
If in \eqref{eq:fb-intr} we set $q_0=0$, $K_{11}=K_{22}=K_{33}=K_{24}=K$, then we obtain the one-constant approximation $K|\nabla \n|^2$ to the free-energy density formula.
\par
The surface integral in \eqref{eq:free-intr}, the {\it surface free energy}, models the interaction of the liquid crystal with the surrounding isotropic solution. There, an anisotropic
surface tension $g$, depending on the orientation of $\n$ relative
to the exterior normal $\normal$, comes into play. A frequent choice for $g$ is given by the Rapini-Papoular formula \cite{rapini:distortion}
\begin{equation}
\label{eq:gamma_a}
g(\n,\normal):=\gamma\big(1+\lambda(\n\cdot\normal)^2\big),
\end{equation}
where $\gamma>0$ is an \emph{isotropic} surface tension and $\lambda>-1$ is a dimensionless \emph{anchoring strength}. When $\lambda>0$ the surface energy favours a tangential orientation of the optic axis at the free surface of the drop, whereas when $-1<\lambda<0$ it favours an homeotropic (along $\normal$) alignment. 
\par
A key theme in the mathematical modelling of liquid crystals \cite{ball:mathematics,ball:liquid,bedford} is the importance of a notion of admissible geometries and proper associated function space setting for the description of defects, i.e. discontinuities in the nematic director field which may be concentrated in isolated points, lines or surfaces. The same energy functional may predict different behaviour according to the function space or the admissible geometry used, as they may allow the description of one kind of defect but not another. Within Frank's theory, the classical variational problem of determining the optimal configuration of a LC droplet consists in the minimization of the functional \eqref{eq:free-intr} over pairs $(\Omega,\n)$, where $\Om\subset\mathbb{R}^3$ is an open, bounded set with a Lipschitz boundary and volume $m$, and $\n\in W^{1,2}(\Omega,\mathbb{S}^2)$,
\begin{equation}
\label{eq:minfree-intr}
\inf_{\Om}\min_{\n\in W^{1,2}(\Omega,\mathbb{S}^2)}\left\{\free\left[\Omega,\n\right]\,:\, |\Omega|=m, \, \hbox{with } \Omega \hbox{ open, bounded, and Lipschitz}\right\},
\end{equation}
where $|\Om|$ stands for the Lebesgue measure of $\Om$. Under these assumptions on $\Om$, it is well known that the minimization of \eqref{eq:free-intr} among all admissible Sobolev director fields, which yields the total energy of the shape $\Omega$
$$
\free(\Om):=\min_{\n\in W^{1,2}(\Omega,\mathbb{S}^2)}\free\left[\Omega,\n\right],
$$
is well posed for every $\gamma>0$, $\lambda>-1$, and whenever $W(\n,\nabla\n)$ satisfies the Ericksen's inequalities \eqref{eq:eineq}. In the classical formulation, balls, tactoids and cubs are included into the admissible geometries, while only point defects in the director field have finite volume energy.
\par
The mathematical study of the shape optimization problem (1.5) presents significant challenges due to the simultaneous determination of both the shape and the director field, as well as the interplay between elastic and surface energies, which exhibit different scaling behaviors. Rigorous results concerning the existence and regularity properties of minimizers have been achieved under specific additional conditions on the domains, which include convexity (see \cite{Lin-Poon}) or restrictions on the geometry of the admissible drops (see \cite{Geng-Lin} and references therein).
\vskip10pt
Droplets of liquid crystals suspended in an immiscible fluid \cite{volovik:topological,urbanski:liquid} or emerging in biphasic coexistence \cite{oakes:growth,krishnamurthy:topological,li:colloidal} show a delicate balance
between bulk elasticity, surface tension, and surface anchoring, yielding rich morphology of droplet shapes and fascinating inner structures. At equilibrium, they usually assume a shape and a director field described by the classical problem, and preserve a cohesiveness because of the positiveness of the surface-tension coefficient. However, there are exceptions. For sufficiently low values of surface tension, the elasticity of
the nematic phase dominates, and could lead to spontaneous division of chiral liquid crystal droplets during a specific sequence of phase transitions during temperature decrease \cite{lavrentovich:division}, in order to minimize elastic distortions. Another phenomena occurs during the early stages of the isotropic-nematic phase transition in large nematic liquid crystal droplets. Here, the nucleation, growth, and coalescence of nematic domains, characterized by different director orientations, can lead to the formation of disclinations of varying strengths or defect-walls, as described in \cite{kim:morphogenesis,lavrentovich:topological}. The energy barrier for coalescence is defined mainly by the elastic constants of the liquid crystal and the surface tension. 
\par
In view of these scenarios, limiting the analysis to the geometries and director fields described by the classical problem is restrictive. Therefore, it becomes necessary to extend the shape optimization problem to a broader class of admissible domains that admit {\it inner boundaries}, which may be lines (disclination defects) or surfaces (planar or wall-defects), and across which the director $\n$ can be {\it discontinuous}.
These internal diffusive boundaries, with a surface energy supported on them, are thus interpreted as a result of the percolation of the ambient fluid within the drop, or as holes that have collapsed into an inner surface. 
\par
This paper aims to generalize the notion of admissible geometries and space of directors from \eqref{eq:minfree-intr} to account for inner boundaries and the associated discontinuities of $\n$ across them.
Specifically, the generalized shape optimization problem fits within the general framework of {\it free discontinuity problems} for {\it special functions of bounded variations} $SBV$, as suggested in \cite[Section 4.6.4]{AFP}. SBV director fields are necessary to give finite energy to the defects, which are here concentrated on the inner boundaries across which $\n$ jumps. Weak anchoring is included in the model; as a result, the director is not constrained to a specific orientation on the boundary, thereby allowing for a further reduction of its distortion energy.
We show that the extension of the minimum problem \eqref{eq:minfree-intr} to generalized configurations is well posed and that minimizers can be energetically approximated through regular classical configurations. This indicates that the proposed variational problem is a natural generalization of the classical problem on regular domains.  
We refer the reader to \cite{bedford} for further applications of free discontinuity techniques to address issues in liquid crystals problems when $\Omega$ is fixed, or to \cite{BCS} for describing surface defects in smectic A thin films.
More precisely, we consider the family of admissible shapes $\As(\R^3)$, which consists of pairs $(\Om,\Gamma)$ where $\Om\subset \R^3$ is a set of {\it finite perimeter} and finite volume, and $\Gamma$ is a $\hdue$-countably rectifiable set contained in the set of points of density $1$ of $\Om$ (see Section \ref{sec:prel} for the precise definition). 
From a geometrical point of view, we interpret $(\Omega,\Gamma)$ as the set $\Om$ with an ``inner boundary'' $\Gamma$; thus, the shape is conceptually represented by the potentially irregular set $\Om\setminus \Gamma$. If $\Om$ is open, bounded and has a Lipschitz boundary, then $(\Om,\emptyset)\in \As(\R^3)$, indicating that the family $\As(\R^3)$ includes all regular domains. Given $(\Omega,\Gamma)\in\mathcal{A}(\R^3)$, we define the associated space of admissible directors by
\begin{multline*}
\Ns(\Omega,\Gamma):=\Bigg\{\n\in SBV(\R^3;\R^3): \n=0\ \text{ a.e. in $\Omega^c$}, |\n|=1 \text{ a.e. in }\Om, \\
\nabla \n\in L^2(\R^3;\Mtre), J_{\n}\tsub\partial^*\Omega\cup\Gamma\Bigg\},
\end{multline*}
where $\partial^*\Om$ stands for the reduced boundary of $\Om$ and $\tsub$ denotes inclusion up to $\hdue$-negligible sets.
 It turns out that $\ns(\Om,\Gamma)$ can be seen as a subset of a Hilbert space $\hs(\Om,\Gamma)$ which provides a generalization to the possibly irregular geometry $(\Om,\Gamma)$ of the standard Sobolev space $W^{1,2}$, while preserving key properties 
related to immersion and traces (c.f. Proposition \ref{prop:hs}): in this way $\Ns(\Omega,\Gamma)$ is the analogue of the space of Sobolev functions with values in $\mathbb S^2$.
\par
The natural extension of the total energy in \eqref{eq:free-intr} for $(\Om,\Gamma)\in \As(\R^3)$ and $\n\in \Ns(\Om,\Gamma)$ reads as

\begin{equation}
\label{eq:Es-intr}
\tilde\free\left[(\Omega,\Gamma),\n\right]:=\int_\Om W(\n,\nabla\n)\,dx+\int_{\partial^*\Om}g(\n,\normal)\,d\hdue+\int_{\Gamma}\left[g(\n^+,\normal)+g(\n^-,\normal)\right]\dd\hdue,
\end{equation}
where $\n^\pm$ denotes the traces of $\n$ on the rectifiable set $\Gamma$ associated to the orientation given by the normal $\normal$ (see Remark \ref{rem:traceG}). The surface energy on $\Gamma$ separately accounts for the two contributions of the traces $\n^\pm$, reflecting the interpretation of $\Gamma$ as an ``internal boundary'' resulting from an inner fold of the external boundary or the collapse of a hole.
Notice that the total energy $\tilde\free$ in \eqref{eq:Es-intr} reduces to the classical energy $\free$ in \eqref{eq:free-intr} when the geometry is regular. Specifically, if $A$ is an open bounded set with Lipschitz boundary, the family of directors $\Ns(A,\emptyset)$ essentially reduces to $W^{1,2}(A;\mathbb{S}^{2})$ (meaning that if $\n\in \ns(A,\emptyset)$, then $\n_{|A}\in W^{1,2}(A;\mathbb{S}^{2})$). Therefore, $\tilde\free[(A,\emptyset),\n]=\free[A,\n_{|A}]$. 
We will show in Theorem \ref{th:th_opt_director} that, as in the regular setting, the minimization of the energy \eqref{eq:Es-intr} on $\Ns(\Om,\Gamma)$ is well posed, yielding the \emph{total energy} $\tilde\free(\Om,\Gamma)$ \emph{of the shape} $(\Om,\Gamma)$,
\begin{equation}
\label{eq:Etot-intr}
\tilde\free(\Om,\Gamma):=\min_{\n\in\Ns(\Om,\Gamma)}\tilde\free\left[(\Om,\Gamma),\n\right].
\end{equation}
\par
The main result of this paper (Theorem \ref{thm:main2}) states that under the isoperimetric constraint that prescribes the volume $m>0$ of the domain $\Omega$, if the Ericksen's inequalities in \eqref{eq:eineq} hold and the constants of the surface energy satisfy $\gamma>0$ and $-\frac{1}{2}\le \lambda\le 1$, then the shape optimization problem
\begin{equation}
\label{eq:mainpb-intr}
\min\left\{ \tilde\free(\Om,\Gamma)\,:\, (\Om,\Gamma)\in \As(\R^3), |\Om|=m\right\}
\end{equation}
is well posed and its minimum value equals
$$
\inf\left\{ \free(A)\,:\,\text{$A$ has smooth boundary and $|A|=m$}\right\}
$$
so that in particular it is equal also to the infimum on Lipschitz domains.
A similar result holds when the droplet is confined within a convex (or alternatively smooth) open bounded box $D$ (see Theorem \ref{thm:main}).
\par
Existence of a solution for \eqref{eq:mainpb-intr} is obtained through the Direct Method of the Calculus of Variations. Specifically, if $(\Om_k,\Gamma_k)_{k\in\N}$ is a minimizing sequence, we obtain the bound
$$
\sup_k \{\hdue(\partial^*\Om_k)+\hdue(\Gamma_k)\}<+\infty.
$$
Consequently, the compactness for the sequence of sets $(\Om_k)_{k\in\N}$ follows from the theory of sets of finite perimeter, as a consequence of a vanishing/dichotomy/compactness alternative that exploits the translational invariance of the problem. On the other side, for the compactness of the inner interfaces $\Gamma_k$ we rely on a variational notion of convergence for rectifiable sets introduced in \cite{DMFT} for applications in fracture mechanics. This convergence enjoys compactness and lower semicontinuity properties very similar to those of Hausdorff convergence for compact connected sets with finite length. The restriction $-\frac{1}{2}\le \lambda\le 1$ on the surface energy constant $\lambda$ guarantees the lower semicontinuity of the surface energy as the geometries $(\Om,\Gamma)$ vary (see Lemma \ref{lem:glambda} and Theorem \ref{thm:lsc}). The density result for the energy of smooth configurations requires approximations similar to those exploited in \cite{braides:relaxation}: in our setting, additional difficulties arise because the maps are $\mathbb S^2$-valued and the surface energies also depend on the normal trace values. Further lower semicontinuity arguments can address the case of {\it tangential boundary conditions}, where $\n\cdot \normal=0$ and $\n^\pm\cdot \normal=0$ on $\partial^*\Om$ and $\Gamma$, respectively; the existence of stable equilibrium shapes is recovered also in this scenario (Theorem \ref{thm:maintg}).
\par
The paper is organized as follows. In Section \ref{sec:prel} we introduce the notation and recall the basic properties of sets of finite perimeter, rectifiable sets and $SBV$ functions employed throughout the paper. Generalized geometries and associated spaces of directors are defined in Section \ref{sec:geom}, while the main result concerning the existence of stable shapes is stated in Section \ref{sec:main}. In Section \ref{sec:tech} we collect the technical results concerning compactness, lower semicontinuity and approximation through smooth configurations, essential for the proof of the main result contained in Section \ref{sec:mainpf}. Finally, Section \ref{sec:tan} is devoted to the study of tangential boundary conditions.

\section{Notation and preliminaries}
\label{sec:prel}
If $E \subseteq \R^d$, we will denote with $|E|$ its $d$-dimensional Lebesgue measure, and by $\hd(E)$ its $(d-1)$-dimensional Hausdorff measure: we refer to \cite[Chapter 2]{EvansGariepy} for a precise definition, recalling that for sufficiently regular sets $\hd$ coincides with the usual area measure. Moreover, we denote by $E^c$ the complementary set of $E$, and by $1_E$ its characteristic function, i.e., $1_E(x)=1$ if $x \in E$, $1_E(x)=0$ otherwise. Finally, for $\alpha\in [0,1]$ we will write $E^\alpha$ for the points of density $\alpha$ for $E$ (see \cite[Definition 3.60]{AFP}).
\par
If $A \subseteq \R^d$ is open and $1 \le p \le +\infty$, we denote by $L^p(A)$ the usual space of $p$-summable functions on $A$ with norm indicated by $\|\cdot\|_p$. $W^{k,2}(A)$ will stand for the Sobolev  space of functions in $L^2(A)$ whose derivatives up to order $k$ in the sense of distributions belongs to $L^2(A)$. Finally $\ms_b(A;\R^d)$ will denote the space of $\R^d$-valued Radon measures on $A$, which can be identified with the dual of $\R^d$-valued continuous functions on $A$ vanishing at the boundary. We will denote by $|\cdot|$ its total variation.
\par
We say that $\Gamma\subseteq \R^d$ is $\hd$-countably rectifiable if
$$
\Gamma=N\cup \bigcup_{i\in \N}\Gamma_i
$$
where $\hd(N)=0$ and $\Gamma_i\subseteq \ms_i$ are Borel sets, where $\ms_i$ is a $C^1$-hypersurface of $\R^d$. It is not restrictive to assume that the sets $\Gamma_i$ are mutually disjoint. In the rest of the paper, we will write simply rectifiable in place of $\hd$-countably rectifiable. If $\Gamma_1,\Gamma_2$ are rectifiable, we will write $\Gamma_1\tsub \Gamma_2$ if $\hd(\Gamma_1\setminus \Gamma_2)=0$, and $\Gamma_1\teq \Gamma_2$ if $\hd((\Gamma_1\setminus \Gamma_2)\cup (\Gamma_2\setminus \Gamma_1))=0$.

\subsection{Functions of bounded variation}
\label{sec:bv}
Let $A\subseteq \R^d$ be an open set. We say that $u \in BV(A)$ if $u \in L^1(A)$ and its derivative in the sense of distributions is a finite Radon measure on $A$, i.e., $Du \in \ms_b(A;\R^d)$. $BV(A)$ is called the space of {\it functions of bounded variation} on $A$. $BV(A)$ is a Banach space under the norm $\|u\|_{BV(A)}:=\|u\|_{L^1(A)}+\|Du\|_{\ms_b(A;\R^d)}$.  We call $|Du|(A):=\|Du\|_{\ms_b(A;\R^d)}$ the {\it total variation} of $u$. We refer the reader to \cite{AFP} for
an exhaustive treatment of the space $BV$.
\par
If $u\in BV(A)$, then the measure $Du$ can be decomposed canonically (and uniquely) as
$$
Du=D^au+D^ju+D^cu.
$$
The measure $D^au$ is the absolutely continuous part (with respect to the Lebesgue measure) of the derivative: the associated density is denoted by $\nabla u\in L^1(A;\R^d)$. The measure $D^ju$ is the {\it jump part} of the derivative and it turns out that
$$
D^ju=(u^+-u^-)\otimes \nu_u \,\hd\lfloor J_u.
$$
Here $J_u$ is the {\it jump set} of $u$, $\nu_u$ is the normal to $J_u$, while $u^\pm$ are the upper and lower approximate limits at $x$. It turns out that $J_u$ is a  rectifiable set: if we choose the orientation given by a normal vector field $\nu_u$ we have $\hd$-a.e. 
$$
u^+=\gamma_r(u)\qquad\text{and}\qquad u^-=\gamma_l(u)
$$
where $\gamma_r(u)$ and $\gamma_l(u)$ are the right and left traces of $u$ on the rectifiable set $J_u$, associated to the orientation given by $\nu_u$.
Finally $D^cu$ is called the {\it Cantor part} of the derivative, and it vanishes on sets which are $\sigma$-finite with respect to $\hd$. Clearly $D^ju+D^cu$ is the singular part $D^su$ of $Du$ with respect to $\leb^d$.
\par
The space $SBV(A)$ of Special Functions of Bounded Variation on $A$ is defined as
$$
SBV(A):=\{u\in BV(A)\,:\, D^cu=0\},
$$
i.e., it is composed of those functions of bounded variation with vanishing Cantor part.

\subsection{Sets of finite perimeter}
\label{sec:per}
Given $E\subseteq \R^d$ measurable and $A\subseteq \R^d$ open, we say that $E$ has finite perimeter in $A$ (or simply has finite perimeter if $A=\R^d$) if
$$
Per(E;A):=\sup\left\{\int_E div(\varphi)\,dx\,:\, \varphi\in C^\infty_c(A;\R^d), \|\varphi\|_\infty\le 1\right\}<+\infty.
$$
If $|E|<+\infty$, then $E$ has finite perimeter if and only if $1_E\in BV(\R^d)$.
It turns out that
$$
D1_E=\nu_E \hd\lfloor \partial^*E,\qquad Per(E;\R^d)=\hd(\partial^*E),
$$
where the rectifiable set $\partial^*E$ is called the {\it reduced boundary} of $E$, and $\nu_E$ is the associated inner approximate normal (see \cite[Section 3.5]{AFP}). It turns out that $\partial^*E\subseteq \partial E$, but the topological boundary can in general be much larger than the reduced one.

\par
The next proposition is the collection of two approximation results proved in \cite{CoTo}, and will prove particularly useful for our problem.

\begin{proposition}[\bf Interior approximation via smooth sets]\label{pro:comitorres}
Let $\mu$ be a Radon measure on $\R^d$ such that $\mu<<\hd$ and let $E\subset\R^d$ be a bounded set of finite perimeter. Let $u_k:=1_E*\rho_{\varepsilon_k}$, where $\rho_k$ is a regularizing kernel, and let $A_{k,t}:=\{u_k>t\}$. Then for a.e. $t\in (0,1)$, $A_{k,t}$ is a smooth set and, for a.e. $t\in (1/2,1)$, the sequence $(A_{k,t})_k$ provides an interior approximation of $E$, i.e., $\hd(\partial A_{k,t})\to \hd(\partial^*E)$, 
$$
\lim_{k\to+\infty}|\mu|(A_{k,t}\Delta E^1)=0
$$
and
$$
\lim_{k\to+\infty}\hd(\partial A_{k,t}\setminus E^1)=0.
$$
\end{proposition}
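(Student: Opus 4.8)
The plan is to read every assertion off the level-set structure of the smooth mollifications $u_k=1_E*\rho_{\varepsilon_k}$, using Sard's theorem for the smoothness, the coarea formula to convert integrated ($\dd t$) statements into pointwise ones, and the blow-up structure of the reduced boundary to exploit the restriction $t>1/2$. Since each $u_k\in C^\infty$, Sard's theorem gives that a.e. $t\in(0,1)$ is a regular value, so $\{u_k=t\}$ is a smooth hypersurface and $A_{k,t}$ a smooth set with $\partial A_{k,t}=\{u_k=t\}=\partial^*A_{k,t}$; removing a countable union of Lebesgue-null sets of bad values (one per $k$) keeps a.e. $t$ admissible for all $k$. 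Two global inputs enter the rest: first, because $Du_k=(D1_E)*\rho_{\varepsilon_k}$, convolution with a probability kernel does not increase total variation while lower semicontinuity forces $\liminf_k|Du_k|\ge|D1_E|$, so $\int_{\R^d}|\nabla u_k|\,dx=|Du_k|(\R^d)\to|D1_E|(\R^d)=\hd(\partial^*E)$; second, the coarea formula gives $\int_0^1\hd(\partial^*A_{k,t})\,dt=\int_{\R^d}|\nabla u_k|\,dx$.

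For the perimeter convergence, fix $t\in(0,1)$: the pointwise limits $u_k\to1$ on $E^1$ and $u_k\to0$ on $E^0$ (and $|\partial^{\mathrm e}E|=0$) give $1_{A_{k,t}}\to1_E$ in $L^1$, so lower semicontinuity of perimeter yields $\liminf_k\hd(\partial^*A_{k,t})\ge\hd(\partial^*E)$ for \emph{every} such $t$. Writing $f_k(t):=\hd(\partial^*A_{k,t})\ge0$ and $L:=\hd(\partial^*E)$, I thus have $\int_0^1 f_k\to L$ and $\liminf_k f_k\ge L$ a.e.; dominated convergence applied to $(L-f_k)^+\le L$ gives $\int(L-f_k)^+\to0$, and since $\int(f_k-L)\to0$ also $\int(f_k-L)^+\to0$, so $f_k\to L$ in $L^1(0,1)$ and hence $\hd(\partial A_{k,t})\to\hd(\partial^*E)$ for a.e. $t$ along a subsequence (not relabeled).

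For the measure statement I use that $\rho$ is symmetric: at $\hd$-a.e. $x_0\in\partial^*E$ the blow-up of $E$ is a half-space $H$, whence $u_k(x_0)\to\int_H\rho=1/2$. Therefore, for every $t\in(1/2,1)$ and $\hd$-a.e. $x_0\in\partial^*E$ one has $u_k(x_0)<t$ eventually, i.e. $1_{A_{k,t}}(x_0)\to0=1_{E^1}(x_0)$; combined with $1_{A_{k,t}}\to1_{E^1}$ on $E^1\cup E^0$ and Federer's theorem $\hd(\partial^{\mathrm e}E\setminus\partial^*E)=0$, this gives $1_{A_{k,t}}\to1_{E^1}$ $\hd$-a.e., hence $\mu$-a.e. because $\mu\ll\hd$. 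As all the sets lie in a fixed bounded set of finite $|\mu|$-measure, dominated convergence yields $|\mu|(A_{k,t}\Delta E^1)=\int|1_{A_{k,t}}-1_{E^1}|\,d|\mu|\to0$ for every $t\in(1/2,1)$.

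Finally, for the interior containment, coarea with weight $1_{\R^d\setminus E^1}$ together with $|\partial^{\mathrm e}E|=0$ gives
\[
\int_{1/2}^1\hd(\partial A_{k,t}\setminus E^1)\,dt=\int_{(\R^d\setminus E^1)\cap\{u_k>1/2\}}|\nabla u_k|\,dx=\int_{E^0\cap\{u_k>1/2\}}|\nabla u_k|\,dx,
\]
so it suffices to prove the right-hand side tends to $0$; the usual $L^1(1/2,1)$-to-a.e. extraction then gives $\hd(\partial A_{k,t}\setminus E^1)\to0$ for a.e. $t\in(1/2,1)$, and intersecting with the full-measure sets of $t$ from the previous steps finishes the proof. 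This last vanishing is the main obstacle. It is exactly $0$ in the flat half-space model, where $\{u_k>1/2\}$ is the density-$1$ side and hence disjoint from $E^0$, so the whole content is that curvature and irregularity corrections are asymptotically negligible. I would establish it by blow-up: cover $\hd$-a.e.\ point of $\partial^*E$ by balls on which $E$ is $L^1$-close to its approximate tangent half-space, rescale so that $u_k$ converges to the one-dimensional transition profile, verify that the anomalous layer $E^0\cap\{u_k>1/2\}$ carries asymptotically no gradient mass on each such ball, and sum the local estimates using rectifiability and the equi-integrability afforded by $|\nabla u_k|\le(\hd\lfloor\partial^*E)*\rho_{\varepsilon_k}$, whose total masses converge to $\hd(\partial^*E)$.
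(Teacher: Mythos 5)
A preliminary remark: the paper does not prove Proposition \ref{pro:comitorres} at all; it is imported as a known result from \cite{CoTo}, so there is no internal proof to compare yours against. Your architecture (Sard for smoothness; coarea plus the convolution bound $|\nabla u_k|\le\rho_{\varepsilon_k}*(\hd\lfloor\partial^*E)$ and lower semicontinuity for the perimeters; blow-up at $\partial^*E$ with a symmetric kernel for everything tied to $t>1/2$) is in fact the same mechanism as in that source. Within it, your Sard argument, your proof that $f_k\to L$ in $L^1(0,1)$ (in your notation $f_k(t)=\hd(\partial^*A_{k,t})$, $L=\hd(\partial^*E)$), and your proof of $|\mu|(A_{k,t}\Delta E^1)\to 0$ — which you correctly obtain for every $t\in(1/2,1)$ and for the full sequence — are sound.

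There remain two genuine gaps. First, the step you yourself call ``the main obstacle'', namely $\int_{E^0\cap\{u_k>1/2\}}|\nabla u_k|\,dx\to 0$, is not proved: ``cover, rescale, verify, sum the local estimates'' is a plan, not an argument, and this vanishing is precisely the one-sided content that distinguishes the proposition from classical smooth approximation. It can be closed without any covering (which would raise overlap and remainder bookkeeping): dominate $|\nabla u_k|\le\rho_{\varepsilon_k}*(\hd\lfloor\partial^*E)$ and use Fubini to write
$$
\int_{E^0\cap\{u_k>1/2\}}|\nabla u_k|\,dx\le\int_{\partial^*E}\bigl(\rho_{\varepsilon_k}*1_{E^0\cap\{u_k>1/2\}}\bigr)(y)\,d\hd(y).
$$
At every $y\in\partial^*E$ where $E$ blows up to a half space $H$, the rescaled functions $v_k(z):=u_k(y+\varepsilon_k z)$ converge uniformly on $B_1$ to the profile $1_H*\rho$, whose $1/2$-level set is exactly $\partial H$ (radial symmetry and positivity of $\rho$ enter here); hence $|(\{u_k>1/2\}\Delta E)\cap B(y,\varepsilon_k)|=o(\varepsilon_k^d)$, so the integrand (bounded by $1$) tends to $0$ $\hd$-a.e.\ on $\partial^*E$, and dominated convergence with respect to the finite measure $\hd\lfloor\partial^*E$ concludes. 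You listed all of these ingredients, so the gap is one of execution — but the execution is where the proposition lives.

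Second, there is a mismatch with the statement: the proposition asserts that for a.e.\ $t\in(1/2,1)$ the \emph{full} sequence satisfies $\hd(\partial A_{k,t})\to\hd(\partial^*E)$ and $\hd(\partial A_{k,t}\setminus E^1)\to 0$, whereas your ``$L^1(dt)$-to-a.e.'' extractions deliver these limits only along a subsequence of $k$, as you concede. This is not cosmetic: from $\|f_k-L\|_{L^1(0,1)}\to 0$ together with $\liminf_k f_k\ge L$ pointwise one cannot exclude typewriter-type oscillation, i.e.\ a positive-measure set of $t$ each of which sees infinitely many bad indices $k$; so full-sequence a.e.\ convergence genuinely does not follow from the soft integrated bounds you use, and recovering the proposition as stated requires finer input. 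As written you have proved a strictly weaker, subsequence version — which, to be fair, is all this paper ever uses: in Step 1 of the proof of Theorem \ref{thm:approx} the proposition is invoked only to produce \emph{some} sequence of smooth sets with the listed properties.
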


\subsection{A variational convergence for rectifiable sets}
\label{sec:sigma2} 
 We recall the notion of $\sigma^2$-convergence for rectifiable sets introduced in \cite{DMFT} to deal with problems in fracture mechanics. It is a variational notion of convergence for rectifiable sets which enjoy compactness and lower semicontinuity properties under uniform bound for the associated $\hd$ measure, which are very similar to that enjoyed by connected closed sets in $\R^2$ with respect to Hausdorff convergence in view of Go\c l\" ab semicontinuity theorem. The definition is based on the use of the space $SBV$. Recall the notation $\Gamma_1\tsub \Gamma_2$ and $\Gamma_1\teq \Gamma_2$ which denote inclusion and equality up to $\hd$-negligible sets.

\begin{definition}[\bf $\sigma^2$-convergence]
\label{def:sigma2}
Let $D\subset\R^d$ be open and bounded, and let $\Sigma_n,\Sigma\subset D$ be rectifiable sets such that $\hd(\Sigma_n),\hd(\Sigma)\le C$ for some $C>0$. We say that 
$$
\Sigma_n \to \Sigma \qquad\text{in the sense of $\sigma^2$-convergence}
$$ 
if the following two conditions are satisfied.
\begin{itemize}
\item[(a)] If $u_j\in SBV(D)$ with $J_{u_j}\tsub \Sigma_{n_j}$ for some sequence $n_j\to+\infty$, and $u\in SBV(D)$ are such that $\|u_j\|_\infty, \|u\|_\infty \le C$,
$$
u_j \to u\qquad\text{strongly in }L^1(D)
$$
and
$$
\nabla u_j \to \nabla u\qquad\text{weakly in }L^2(D;\R^d),
$$
then $J_{u}\tsub \Sigma$.

\item[(b)] There exist a function $u\in SBV(D)$ with $\nabla u\in L^2(D;\R^d)$ and a sequence $u_n \in SBV(D)$ with $\|u\|_\infty, \|u_n\|_\infty \le C$,
$$
u_n \to u\qquad\text{strongly in }L^1(D)
$$
and
$$
\nabla u_n \to \nabla u\qquad\text{weakly in }L^2(D;\R^d),
$$
such that  $J_u \teq\Sigma$ and $J_{u_n}\tsub \Sigma_{n}$ for every $n\in\N$.
\end{itemize}
\end{definition}

\begin{remark}
{\rm
Condition (a) guarantees that $\Sigma$ contains the jump sets of functions which are suitable limits of functions jumping on $\Sigma_n$. Condition (b) ensures that $\Sigma$ is the smallest set which enjoys this property. The notion of convergence introduced in \cite{DMFT} can indeed be generalized to an exponent $p\in ]1,+\infty[$: we will use only the case $p=2$.
}
\end{remark}

The following compactness and lower semicontinuity result holds true, and will be fundamental for our analysis.

\begin{theorem}[\bf Compactness and lower semicontinuity for $\sigma^2$-convergence]
\label{thm:sigma2}
Let $D\subset\R^d$ be open and bounded. For every sequence $\Sigma_n\subset D$ of rectifiable sets such that $\hd(\Sigma_n)\le C$, there exist a rectifiable set $\Sigma\subset D$ and a subsequence $\Sigma_{n_k}$ such that
$$
\Sigma_{n_k} \to \Sigma \qquad\text{in the sense of $\sigma^2$-convergence.}
$$
Moreover we have
$$
\hd(\Sigma) \le \liminf_{n\to +\infty} \hd(\Sigma_n).
$$
\end{theorem}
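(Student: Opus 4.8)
The plan is to reduce everything to the $SBV$ compactness and closure theorem (see \cite{AFP}), applied to the functions that jump on the $\Sigma_n$, and then to isolate a single \emph{maximal} limit function whose jump set will be the set $\Sigma$. After rescaling we may assume $C=1$, and after a first extraction we may assume $\hd(\Sigma_n)\to\ell:=\liminf_n\hd(\Sigma_n)$, so that the lower semicontinuity bound will follow once $\Sigma$ is realized as the jump set of a function recovered along the full (sub)sequence.

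First I would fix the relevant function classes. For $\Lambda\in\N$ let $\mathcal{K}_\Lambda$ be the set of $v\in SBV(D)$ with $\|v\|_\infty\le1$, $\|\nabla v\|_2\le\Lambda$ and $\hd(J_v)\le1$; by the compactness theorem $\mathcal{K}_\Lambda$ is compact for the $L^1(D)$ topology, and along $L^1$-converging sequences in $\mathcal{K}_\Lambda$ the absolutely continuous gradients converge weakly in $L^2$ to the gradient of the limit. Set $\mathcal{W}_n^\Lambda:=\{v\in\mathcal{K}_\Lambda:J_v\tsub\Sigma_n\}$. Since the $\mathcal{W}_n^\Lambda$ are subsets of the fixed compact metric space $\mathcal{K}_\Lambda$, their closures admit a Kuratowski-convergent subsequence; extracting and diagonalizing over $\Lambda\in\N$ I obtain a subsequence (not relabelled) along which $\mathcal{W}_n^\Lambda$ Kuratowski-converges to a set $\mathcal{U}^\Lambda$ for every $\Lambda$. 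I call a function \emph{attainable} if it lies in $\mathcal{U}:=\bigcup_\Lambda\mathcal{U}^\Lambda$. The decisive feature of this set-up is that the Kuratowski \emph{upper} limit also equals $\mathcal{U}^\Lambda$: hence $v\in\mathcal{U}$ if and only if there exist a subsequence $n_j$ and $v_j\in\mathcal{W}_{n_j}^\Lambda$ with $v_j\to v$ in $L^1$, which (using the second property of $\mathcal{K}_\Lambda$, plus that weak $L^2$-convergence of gradients forces a gradient bound) is exactly the data appearing in Definition \ref{def:sigma2}.

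Next I would record a \emph{combination} property of $\mathcal{U}$: given $v,w\in\mathcal{U}$, the function $z=v+Nw$ again lies in $\mathcal{U}$ (its recovery sequence is $v_n+Nw_n$, after the harmless rescaling to unit sup-norm), and for a.e.\ choice of $N$ one has $J_z\teq J_v\cup J_w$, since on $J_v\cap J_w$ cancellation forces $N=-[v]/[w]$ pointwise, so the bad set is $\hd$-null for a.e.\ $N$ by Fubini. I then set
\[
m^*:=\sup\{\hd(J_v):v\in\mathcal{U}\}.
\]
Because each $v\in\mathcal{U}$ is attainable, lower semicontinuity of $\hd(J_\cdot)$ gives $\hd(J_v)\le\ell$, so $m^*\le\ell<+\infty$. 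Taking a maximizing sequence $v^k$ and combining them, this time countably, as a rapidly convergent series $u=\sum_k N_k v^k$, I would produce a single $u\in\mathcal{U}$ with $J_u\teq\bigcup_kJ_{v^k}$ and hence $\hd(J_u)=m^*$. Then I define $\Sigma:=J_u$, which is rectifiable, contained in $D$, with $\hd(\Sigma)=m^*\le\ell$, yielding the claimed lower semicontinuity. Condition (b) holds by construction, $u\in\mathcal{U}$ providing the recovery sequence $u_n$ with $J_{u_n}\tsub\Sigma_n$. For condition (a), any admissible limit $u'$ is attainable, hence $u'\in\mathcal{U}$; combining it with $u$ and invoking maximality, $\hd(J_u\cup J_{u'})\le m^*=\hd(J_u)$ forces $J_{u'}\tsub\Sigma$.

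The step I expect to be the main obstacle is the countable combination producing the maximal $u$, i.e.\ realizing the whole maximal attainable jump set as the jump set of one scalar $SBV$ function. Two effects compete and must be controlled at once. First, jumps of distinct summands may cancel on a set of positive $\hd$-measure; I would exclude this by choosing the coefficients $N_k$ one at a time, each step discarding, via the Fubini argument of the finite case, those values causing the new term to cancel the current partial sum. Second, since $v\mapsto\hd(J_v)$ is only lower semicontinuous, jump set can be lost when passing from the partial sums to their limit: keeping $\sum_k N_k\|v^k\|_{BV}<+\infty$ makes $u$ a $BV$-norm limit of the partial sums, whose jump sets increase, but ensuring in addition that the jumps already established do not decay to zero along the construction is the delicate part. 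This joint, inductive choice of coefficients is precisely the encoding performed in \cite{DMFT} and \cite{braides:relaxation}; the remaining verifications, namely the $L^1$-compactness of $\mathcal{K}_\Lambda$ and the matching of attainability with Definition \ref{def:sigma2}, are direct consequences of the $SBV$ compactness and closure theorem.
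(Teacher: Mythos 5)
A preliminary remark on the comparison: the paper does not prove Theorem \ref{thm:sigma2} at all. Both Definition \ref{def:sigma2} and the compactness/lower semicontinuity theorem are recalled from \cite{DMFT}, and the paper only ever uses them as a black box (the definition is manipulated solely inside the proof of Theorem \ref{thm:lsc}). So there is no internal proof to measure your attempt against; the relevant benchmark is the original argument of \cite{DMFT}, and your sketch is in essence a reconstruction of it: Ambrosio's compactness and closure theorems applied to the constrained classes $\mathcal{W}_n^\Lambda$, a Kuratowski-type diagonal extraction in the compact metric space $\mathcal{K}_\Lambda$, a maximal attainable jump set, pairwise combinations $v+Nw$ for almost every coefficient $N$ (the same device as Remark \ref{rem:jumpsum}), and a countable encoding realizing the maximal jump set. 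The parts you actually carry out are sound. In particular, your passage to the closures $\overline{\mathcal{W}_n^\Lambda}$ is harmless even though the $L^1$-closedness of $\mathcal{W}_n^\Lambda$ itself is not obvious: any point of the Kuratowski limit is an $L^1$-limit of genuine elements of $\mathcal{W}_{n}^\Lambda$ (approximate each element of the closure to within $1/n$), so all recovery sequences can be taken to jump inside $\Sigma_n$, which is exactly what the uniform bound $\hd(J_{v_n+Nw_n})\le\hd(\Sigma_n)$ and condition (b) require. Your verification of condition (a), the bound $m^*\le\ell$, and the Fubini argument for a.e.\ $N$ are all correct.

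The genuine gap is the one you name yourself, and it is not a peripheral technicality but the load-bearing step: producing a single $u\in\mathcal{U}$ with $J_u\teq\bigcup_kJ_{v^k}$ via the series $u=\sum_kN_kv^k$, i.e.\ choosing the coefficients inductively so that (i) the series converges in $BV$ with $\nabla u\in L^2$ and admits recovery sequences along the whole sequence $(\Sigma_n)$, and (ii) no portion of positive $\hd$-measure of the accumulated jump set is cancelled, either by later summands or in the passage to the limit of the partial sums. Deferring this to \cite{DMFT} (and \cite{braides:relaxation}) is circular in the present context, since the statement to be proved is precisely the result imported from \cite{DMFT}; as a standalone proof, your proposal is therefore incomplete exactly at its crux. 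It is worth noting how far one can get without the series: using only finite combinations $w^k=v^1+N_2v^2+\dots+N_kv^k$ and continuity from below of $\hd$, one obtains $\hd\bigl(\bigcup_iJ_{v^i}\bigr)=m^*\le\ell$ together with condition (a) for $\Sigma:=\bigcup_iJ_{v^i}$, since any attainable $u'$ satisfies $\hd\bigl(J_{u'}\setminus\bigcup_{i\le k}J_{v^i}\bigr)\le m^*-\hd(J_{w^k})\to0$. Thus the lower semicontinuity estimate and condition (a) survive with elementary arguments; it is only condition (b) — the existence of one function jumping on essentially all of $\Sigma$ and attainable along the full sequence — that irreducibly requires the encoding construction you postpone.
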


\section{Generalized admissible configurations and associated energies}
\label{sec:geom}
In this section we introduce the formal definition of the family of admissible configurations and of the associated space of directors.
%

\subsection{Family of admissible geometries}
The class of admissible geometries we will consider is as follows. Recall that $\Om^1$ denotes the set of points of density one for $\Om$.

\begin{definition}[{\bf Generalized admissible configurations}]
\label{eq:admissible_geometries}
We say that the pair $(\Omega,\Gamma)$ is an admissible geometry, and we write $(\Omega,\Gamma)\in\mathcal{A}(\R^3)$, if $\Omega\subset\R^3$ is a set of finite perimeter with $|\Om|<+\infty$, and $\Gamma\subset \Omega^1$ is a rectifiable set with $\hdue(\Gamma)<+\infty$. 
\end{definition}

\begin{remark}
{\rm
From a geometrical point of view, $(\Om,\Gamma)\in\As(\R^3)$ can be thought of as the set $\Om$ with an "inner" crack $\Gamma$. Consequently, the domain is roughly given by the set $\Om\setminus \Gamma$, which may be irregular. If $\Om$ is open, bounded, and has a Lipschitz boundary, then $(\Om,\emptyset)\in \As(\R^3)$, meaning that the family $\As(\R^3)$ essentially includes all regular domains.
}
\end{remark}

\subsection{Family of admissible directors}
To associate suitable generalized bulk and surface energies with an admissible geometry $(\Om,\Gamma)\in\As(\R^3)$, we need to define a functional space that serves as an analogue to the Sobolev space $W^{1,2}$ on the potentially irregular set $\Om\setminus \Gamma$. 
\par
A natural candidate to replace the Sobolev space $W^{1,2}$ is given by
\begin{multline*}
\hs(\Om,\Gamma)
:=\Bigg\{\uu\in SBV(\R^3;\R^3): \uu=0\ \text{ a.e. in $\Omega^c$}, 
\nabla \uu\in L^2(\R^3;\Mtre), \\
J_{\uu}\tsub\partial^*\Omega\cup\Gamma, 
\int_{\partial^*\Om \cup \Gamma}[|\uu^+|^2+|\uu^-|^2]\,d\hdue<+\infty\Bigg\},
\end{multline*}
endowed with the scalar product
\begin{equation}
\label{eq:scalarpd}
(\uu,\vv)_{\hs}:=\int_\Om \nabla \uu:\nabla \vv\,dx+\int_{\partial^*\Om \cup \Gamma} [\uu^+\cdot \vv^++\uu^-\cdot \vv^-]\,d\hdue.
\end{equation}

\begin{remark}
\label{rem:traceG}
{\rm
The values $\uu^\pm$ involved in the definition of $\hs(\Om,\Gamma)$ are the trace values of $\uu$ on the rectifiable set $\partial^*\Om \cup \Gamma$ in the following sense.
\par
Given a rectifiable set $K\subset \R^3$ with $\hdue(K)<+\infty$, we can, by definition, write
\begin{equation*}
K=E\cup  \bigcup_{i=0}^\infty K_i,
\end{equation*}
where $\hdue(E)=0$, and for each $i\in \N$ the Borel sets $K_i$ are subsets of a $\mathcal{C}^1$-manifold $\ms_i$ of dimension $2$, with $K_i\cap K_j=\emptyset$ for $i\neq j$. 
\par
It is not restrictive, up to reducing $K_i$, to assume that each $\ms_i$ is orientable, with an associated normal vector filed $\normal_i$. Consequently, two continuous trace operators can be defined from $BV(\R^3;\R^3)$ to $L^1(\ms_i;\R^3)$, corresponding to the ``left'' and ``right'' traces. For any $\uu\in BV(\R^3;\R^3)$, let $\gamma_+^i(\uu)$ and $\gamma_-^i(\uu)$ denote the traces of $\uu$ on $K_i$
relative to $\normal_i$ and $-\normal_i$, respectively. Thus, we have
$$
\uu^+=\sum_i \gamma_+^i(\uu)\qquad\text{and}\qquad \uu^-=\sum_i \gamma_-^i(\uu).
$$
}
\end{remark}

The family of admissible directors that we consider is the following.

\begin{definition}[\bf Admissible directors]
Let $(\Omega,\Gamma)\in\mathcal{A}(\R^3)$. We set
\begin{multline}
\label{eq:Ns}
\Ns(\Omega,\Gamma):=\Bigg\{\n\in SBV(\R^3;\R^3): \n=0\, \text{ a.e. in $\Omega^c$}, |\n|=1 \text{a.e. in }\Om, \\
\nabla \n\in L^2(\R^3;\Mtre), J_{\n}\tsub\partial^*\Omega\cup\Gamma\Bigg\}.
\end{multline}
\end{definition}

\begin{remark}
{\rm
Notice that if $\Om$ is an open, bounded set with a Lipschitz boundary, then $(\Om,\emptyset)\in \As(\R^3)$. In this case, the spaces $\hs(\Om,\emptyset)$ and $\Ns(\Om,\emptyset)$ are simply given by extending functions in the Sobolev spaces $W^{1,2}(\Om;\R^3)$ and $W^{1,2}(\Om;\mathbb{S}^{2})$ to zero outside $\Om$, respectively.
}
\end{remark}

We collect in the proposition below key properties of the spaces $\hs(\Om,\Gamma)$ and $\ns(\Om,\Gamma)$. The proof is provided in Section \ref{subsec:basic}.

\begin{proposition}
\label{prop:hs}
The following properties hold true.
\begin{itemize}
\item[(a)] The scalar product defined in \eqref{eq:scalarpd} endows $\hs(\Om,\Gamma)$ with a Hilbert space structure.
\item[(b)] There exists a constant $C>0$ (depending on $|\Om|$ and $\hdue(\partial^*\Om\cup\Gamma)$) such that for every $\uu\in \hs(\Om,\Gamma)$
\begin{equation}
\label{eq:bvbound}
\|\uu\|_{BV}\le C\|\uu\|_\hs.
\end{equation}
\item[(c)] The immersion of $\hs(\Om,\Gamma)$ into $L^1(\R^3;\R^3)$ is compact.
\item[(d)] $\ns(\Om,\Gamma)$ is sequentially weakly closed in $\hs(\Om,\Gamma)$. Moreover, if $(\n_k)_{k\in\N}$ is a sequence in $\ns(\Om,\Gamma)$ such that $\n_k\weak \n$ weakly in $\hs(\Om,\Gamma)$, then
$$
\n_k^\pm \to \n^\pm \qquad\text{strongly in }L^2(\partial^*\Om\cup\Gamma;\R^3).
$$
\end{itemize}
\end{proposition}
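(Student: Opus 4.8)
The plan is to establish the four properties in order, since each builds on the previous one.

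\medskip

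\textbf{Property (b): the $BV$ bound.} I would start here because it underlies everything else. Given $\uu \in \hs(\Om,\Gamma)$, I must bound $\|\uu\|_{BV} = \|\uu\|_{L^1} + |D\uu|(\R^3)$ by $\|\uu\|_{\hs}$. Decompose $D\uu = \nabla\uu\,\leb^3 + (\uu^+-\uu^-)\otimes\normal\,\hdue\lfloor J_\uu$; since $\uu \in SBV$ there is no Cantor part. The jump part is controlled directly: $|(\uu^+-\uu^-)\otimes\normal| \le |\uu^+| + |\uu^-|$, so its total variation is at most $\int_{\partial^*\Om\cup\Gamma}(|\uu^+|+|\uu^-|)\,d\hdue$, which by Cauchy--Schwarz (using $\hdue(\partial^*\Om\cup\Gamma)<+\infty$) is bounded by $C\|\uu\|_{\hs}$. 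The absolutely continuous part is $\|\nabla\uu\|_{L^1(\Om)} \le |\Om|^{1/2}\|\nabla\uu\|_{L^2} \le C\|\uu\|_{\hs}$, using that $\uu=0$ on $\Om^c$. For the $L^1$ norm itself, the plan is to invoke a Poincar\'e-type inequality: since $\uu$ is supported in the finite-measure set $\Om$, the $BV$--$L^1$ embedding (or the isoperimetric inequality applied to $\uu$) gives $\|\uu\|_{L^1} \le C|D\uu|(\R^3)$, which closes the estimate. I expect this Poincar\'e step to be the technical crux, as it is what forces the constant to depend on $|\Om|$ and $\hdue(\partial^*\Om\cup\Gamma)$.

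\medskip

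\textbf{Properties (a) and (c).} For completeness as a Hilbert space, I take a Cauchy sequence $(\uu_k)$ in $\hs$; by (b) it is Cauchy in $BV$, hence converges in $L^1$ to some $\uu$, while $\nabla\uu_k$ and the traces $\uu_k^\pm$ converge in $L^2$. The limit $\uu$ is supported in $\Om$, lies in $SBV$ with $J_\uu \tsub \partial^*\Om\cup\Gamma$ (this jump-set containment is exactly the closedness guaranteed by $\sigma^2$-type reasoning and the fact that the limiting $\leb^2$ densities match), so $\uu\in\hs$ and $\uu_k\to\uu$ in $\hs$. For compactness of the immersion into $L^1$ (property (c)), a bounded sequence in $\hs$ is bounded in $BV$ by (b), and all functions are supported in the fixed finite-measure set $\Om$; the standard $BV$ compact embedding into $L^1_{\loc}$, combined with equi-integrability from the uniform support and the $L^1$ bound, yields a subsequence converging strongly in $L^1(\R^3;\R^3)$.

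\medskip

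\textbf{Property (d): weak closedness of $\ns$ and strong trace convergence.} Suppose $\n_k \weak \n$ weakly in $\hs$. By (c) (after extracting a subsequence) $\n_k\to\n$ strongly in $L^1$, hence pointwise a.e., so $|\n|=1$ a.e.\ in $\Om$ and $\n=0$ on $\Om^c$; weak convergence gives $\nabla\n_k\weak\nabla\n$ in $L^2$, and the inclusion $J_\n \tsub \partial^*\Om\cup\Gamma$ follows since all the jump sets sit inside the fixed rectifiable set $\partial^*\Om\cup\Gamma$ and this containment passes to weak limits (the $\sigma^2$ machinery of Definition~\ref{def:sigma2}(a) is precisely designed for this). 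Thus $\n\in\ns(\Om,\Gamma)$. The strong trace convergence $\n_k^\pm\to\n^\pm$ in $L^2(\partial^*\Om\cup\Gamma)$ is the main obstacle and the payoff of the Hilbert structure: weak convergence $\n_k^\pm\weak\n^\pm$ in $L^2$ holds by continuity of the trace part of the scalar product, so it suffices to upgrade to norm convergence via $\|\n_k^\pm\|\to\|\n^\pm\|$. The plan is to exploit the constraint $|\n_k|=1$ together with lower semicontinuity of the full $\hs$-norm under weak convergence: since $|\n_k|=1$ a.e.\ pins down the $L^2$ bulk behaviour and the weak-liminf inequality for the trace integrals can only lose mass, I would argue that no trace mass can escape by combining the weak convergence of traces with a liminf/limsup sandwich on $\int(|\n_k^+|^2+|\n_k^-|^2)$; showing this sum of surface integrals converges (rather than merely being lower semicontinuous) forces strong $L^2$ convergence of the traces. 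This is where the saturation $|\n|=1$ and the precise pairing of $\pm$ traces in \eqref{eq:scalarpd} do the essential work.
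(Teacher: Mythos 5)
Your proofs of (a)--(c) follow the paper's own route: the $BV$ bound comes from the embedding $BV(\R^3;\R^3)\hookrightarrow L^{3/2}(\R^3;\R^3)$ combined with H\"older's inequality on the finite-measure set $\Om$ (this is exactly your Poincar\'e step; note the constant carries a factor depending on $|\Om|$), completeness is transferred to $BV$ via (b), and compactness comes from the local $BV$ embedding. Two imprecisions are worth fixing, though neither is a real gap. In (c), ``equi-integrability from the uniform support and the $L^1$ bound'' is not the right mechanism when $\Om$ is unbounded: tightness follows from the uniform $L^{3/2}$ bound together with $|\Om\setminus B_R|\to 0$ as $R\to+\infty$. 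And you do not need any $\sigma^2$ machinery: in (a) the sequence converges strongly in $BV$, so both the absence of a Cantor part and the absolute continuity of the limit derivative outside $\partial^*\Om\cup\Gamma$ pass to the limit by elementary total-variation estimates; in (d) the inclusion $J_{\n}\tsub\partial^*\Om\cup\Gamma$ is automatic, since a weak limit taken in the Hilbert space $\hs(\Om,\Gamma)$ is by definition an element of that space.

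Part (d) is where you genuinely depart from the paper, and your argument is correct. The paper proves a quantitative trace-continuity lemma: if $u_n\to u$ in $L^1(A)$ with $\|\nabla u_n\|_2+\|u_n\|_\infty\le C$ and $J_{u_n}\tsub K$ for a fixed rectifiable $K\subset A$ of finite $\hdue$-measure, then $\gamma(u_n)\to\gamma(u)$ in $L^1(\partial A)$; the proof rests on the $BV$ trace estimate on a collar $A_\eps$, in which the gradient contribution is of order $|A_\eps|^{1/2}$, the jump contribution is of order $\hdue(K\cap A_\eps)\to 0$, and the term $\frac{1}{\eps}\|u_n-u\|_{L^1}$ vanishes; this is then applied to the local trace operators of Remark \ref{rem:traceG}. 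You instead argue softly: the trace map $\uu\mapsto(\uu^+,\uu^-)$ is linear and bounded from $\hs(\Om,\Gamma)$ into $L^2\times L^2$ (its norm is dominated by the $\hs$-norm, visible in \eqref{eq:scalarpd}), hence weak-weak continuous, so $\n_k^\pm\weak\n^\pm$; strong convergence then follows once the $L^2$-norms converge. For that last step your ``liminf/limsup sandwich'' should be replaced by an exact identity, which is the one point you leave vague: for every $\vv\in\ns(\Om,\Gamma)$ one has $|\vv^+|^2+|\vv^-|^2=1$ $\hdue$-a.e.\ on $\partial^*\Om$ (the trace from inside $\Om$ has modulus one by blow-up at points of the reduced boundary, while the outside trace vanishes) and $|\vv^+|^2+|\vv^-|^2=2$ $\hdue$-a.e.\ on $\Gamma$ (both one-sided traces have modulus one, because $\Gamma\subset\Om^1$). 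Hence the trace norm equals the constant $\hdue(\partial^*\Om)+2\hdue(\Gamma)$ for every admissible director, in particular along the sequence and for its limit, and weak convergence plus convergence of norms yields strong convergence in the Hilbert space $L^2\times L^2$. The trade-off between the two proofs: the paper's collar lemma uses neither the constraint $|\n|=1$ nor weak $\hs$-convergence (only $L^1$ convergence and uniform bounds), so it is the more robust tool; your argument is shorter and exploits exactly the two special features available in (d), namely that the traces are part of the $\hs$-scalar product and that the director constraint freezes the total surface mass.
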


\subsection{Energies for admissible configurations}
In this section, we extend the total energy $\free$ defined in \eqref{eq:free-intr} for regular domains and director fields to the generalized geometries and directors introduced in Sections 3.1 and 3.2. The bulk and surface free energies describing geometries $(\Om,\Gamma)\in \As(\R^3)$ and directors $\n\in \Ns(\Om,\Gamma)$ are defined, respectively, as
\begin{equation}
\label{eq:Eb}
\tilde\free_\mathrm{b}\left[(\Omega,\Gamma),\n\right]:=\int_\Om W(\n,\nabla\n)\,dx
\end{equation}
and
\begin{equation}
\label{eq:Es}
\tilde\free_\mathrm{s}\left[(\Omega,\Gamma),\n\right]:=\int_{\partial^*\Om}g(\n,\normal)\,d\hdue+\int_{\Gamma}\left[g(\n^+,\normal)+g(\n^-,\normal)\right]\dd\hdue.
\end{equation}
The bulk free-energy $W$ is defined in \eqref{eq:fb-intr}, and satisfy Ericksen's inequalities \eqref{eq:eineq}. 
Concerning the surface energy \eqref{eq:Es}, $\normal$ denotes a unit normal vector field defined on the rectifiable set $\partial^*\Om \cup \Gamma$, $\n^\pm$ are the associated traces of $\n$ on $\Gamma$, and $g$ is the surface energy density given by \eqref{eq:gamma_a},
with $\gamma>0$ and $\lambda>-1$.


\begin{remark}
\label{rem:reg-reduce}
{\rm
As noticed in the introduction, $\tilde\free_\mathrm{b}$ and $\tilde\free_\mathrm{s}$ naturally extend the bulk and surface energies defined in \eqref{eq:free-intr} for regular configurations to describe both the geometries of $\mathcal{A}(\R^3)$ and the associated director fields in $\ns(\Om,\Gamma)$.   
}
\end{remark}

\begin{remark}[\bf Properties of the bulk energy density]
\label{rem:bulk}
{\rm
Under the assumption of validity of Ericksen's inequalities \eqref{eq:eineq}, it is well known that the bulk energy density $W:\R^3\times M^{3\times 3}\to [0,+\infty[$ satisfies the following properties:
\begin{itemize}
\item[(a)] There exist constants $c_1, c_2, c_3, c_4>0$ such that for every $\bm s\in \R^3$ and $\bm M\in M^{3\times 3}$
\begin{equation}
\label{eq:lower_upper_W_chol}
c_1\min\{|\bm s|^2,1\}|\bm M|^2-c_2\leq W(\bm s,\bm M)\leq c_3 \max\{1,|\bm s|^2\} |\bm M|^2+c_4.
\end{equation}
\item[(b)] For every $\bm s\in\mathbb{R}^3$, the map $\bm M\mapsto W\left(\bm s,\bm M\right)$ is quasiconvex.
\end{itemize}
Quasiconvexity follows by observing that the first three terms in \eqref{eq:fb-intr} are convex in $\bm M$, while the last term is associated to a {\it null lagrangian} (see \cite[Lemma 1.2]{HKL}). Properties (a) and (b) also hold in the two dimensional setting.
}
\end{remark}

According to \eqref{eq:Etot-intr}, we define the \emph{total energy of the shape} $(\Om,\Gamma)$ as
\begin{equation}
\label{eq:Etot}
\tilde\free(\Om,\Gamma):=\min_{\n\in \Ns(\Om,\Gamma)}\left[\tilde\free_\mathrm{b}\left[(\Omega,\Gamma),\n\right]+\tilde\free_\mathrm{s}\left[(\Omega,\Gamma),\n\right]\right];
\end{equation}
The following theorem establishes the well posedness of this minimization problem.
\begin{theorem}
\label{th:th_opt_director}
Assume that Ericksen's inequalities \eqref{eq:eineq} hold, and let the constants of the surface energy satisfy $\gamma>0$ and $\lambda>-1$. 
Then, the minimum in \eqref{eq:Etot} is attained.
\end{theorem}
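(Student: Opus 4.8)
The plan is to apply the Direct Method of the Calculus of Variations to the fixed geometry $(\Om,\Gamma)$. First I would check that the infimum in \eqref{eq:Etot} is finite: the lower bound in \eqref{eq:lower_upper_W_chol} gives $\tilde\free_\mathrm{b}\ge -c_2|\Om|>-\infty$, while $g\ge \gamma\min\{1,1+\lambda\}>0$ (using $\lambda>-1$) makes $\tilde\free_\mathrm{s}$ nonnegative, so the energy is bounded below; it is also finite on, e.g., a constant unit field extended by zero, so a minimizing sequence $(\n_k)_k\subset\Ns(\Om,\Gamma)$ exists. The next step is to derive a priori bounds in $\hs(\Om,\Gamma)$. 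Since $|\n_k|=1$ a.e. in $\Om$, the factor $\min\{|\n_k|^2,1\}$ in \eqref{eq:lower_upper_W_chol} equals $1$, so $W(\n_k,\nabla\n_k)\ge c_1|\nabla\n_k|^2-c_2$; integrating over $\Om$ and using that the total energy is bounded along the minimizing sequence together with $\tilde\free_\mathrm{s}\ge0$ yields a uniform bound on $\|\nabla\n_k\|_{L^2}$ (recall $\nabla\n_k=0$ outside $\Om$). The constraint $|\n_k|=1$ also forces $|\n_k^\pm|\le1$ $\hdue$-a.e., so the trace integral in the definition of $\hs$ is bounded by $2\hdue(\partial^*\Om\cup\Gamma)<+\infty$. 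Hence $\|\n_k\|_{\hs}$ is uniformly bounded.

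With these bounds in hand, compactness follows directly from Proposition \ref{prop:hs}. Since $\hs(\Om,\Gamma)$ is a Hilbert space (part (a)), up to a subsequence $\n_k\weak\n$ weakly in $\hs(\Om,\Gamma)$; by the compact immersion of part (c), $\n_k\to\n$ strongly in $L^1(\R^3;\R^3)$, and thus (after a further subsequence) $\n_k\to\n$ a.e. By the weak closedness of $\Ns(\Om,\Gamma)$ in part (d), the limit satisfies $\n\in\Ns(\Om,\Gamma)$ (in particular $|\n|=1$ a.e.\ in $\Om$ and $J_\n\tsub\partial^*\Om\cup\Gamma$), and moreover the same part gives the strong convergence of traces $\n_k^\pm\to\n^\pm$ in $L^2(\partial^*\Om\cup\Gamma;\R^3)$.

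It remains to establish lower semicontinuity of the energy along this subsequence. For the surface term this is in fact a continuity statement: since $g(\n^\pm,\normal)=\gamma\bigl(1+\lambda(\n^\pm\cdot\normal)^2\bigr)$ is quadratic in the trace, the strong $L^2$ convergence $\n_k^\pm\to\n^\pm$ gives $(\n_k^\pm\cdot\normal)^2\to(\n^\pm\cdot\normal)^2$ in $L^1(\partial^*\Om\cup\Gamma)$, whence $\tilde\free_\mathrm{s}[(\Om,\Gamma),\n_k]\to\tilde\free_\mathrm{s}[(\Om,\Gamma),\n]$; note that here the weaker hypothesis $\lambda>-1$ suffices precisely because the geometry is fixed, so only continuity in $\n$ is needed. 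For the bulk term I would invoke lower semicontinuity for quasiconvex integrands: on the set $\{|\bm s|=1\}$ the density $W(\bm s,\bm M)$ is continuous in $\bm s$, quasiconvex in $\bm M$ (Remark \ref{rem:bulk}(b)), and of standard quadratic growth $c_1|\bm M|^2-c_2\le W\le c_3|\bm M|^2+c_4$ by \eqref{eq:lower_upper_W_chol}. Since $\n_k\to\n$ a.e.\ with $|\n_k|\equiv1$ and $\nabla\n_k\weak\nabla\n$ weakly in $L^2$, an Acerbi–Fusco type lower semicontinuity theorem for quasiconvex functionals with continuous dependence on the lower-order variable gives $\tilde\free_\mathrm{b}[(\Om,\Gamma),\n]\le\liminf_k\tilde\free_\mathrm{b}[(\Om,\Gamma),\n_k]$. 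Combining the two,
$$
\tilde\free[(\Om,\Gamma),\n]\le\liminf_{k\to+\infty}\tilde\free[(\Om,\Gamma),\n_k]=\inf_{\Ns(\Om,\Gamma)}\tilde\free,
$$
and as $\n\in\Ns(\Om,\Gamma)$ the infimum is attained.

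I expect the main obstacle to be the lower semicontinuity of the bulk energy, because of the combined dependence of $W$ on both $\n_k$ (lower order) and $\nabla\n_k$ and the genuinely nonconvex $|\bm s|^2|\bm M|^2$ growth. The key simplification is the pointwise constraint $|\n_k|=1$, which removes the superquadratic growth in $\bm s$ and reduces the problem to the standard quasiconvex, $p=2$ setting; the quasiconvexity from Remark \ref{rem:bulk}(b) (coming from convexity of the first three terms and the null-Lagrangian structure of the $K_{24}$ term) is exactly what is required to pass to the limit under weak $L^2$ convergence of gradients. A minor technical point, handled by the continuity of $W$ in $\bm s$ uniformly on $\{|\bm s|=1\}$ together with the a.e.\ convergence $\n_k\to\n$, is the replacement of the argument $\n_k$ by $\n$ in the lower-order slot before applying the quasiconvex lower semicontinuity with a frozen first variable.
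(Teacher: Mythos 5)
Your overall scheme coincides with the paper's proof: a minimizing sequence is bounded in $\hs(\Om,\Gamma)$, Proposition \ref{prop:hs} yields a weak limit $\n\in\Ns(\Om,\Gamma)$ with strong $L^2$ convergence of the traces, the surface energy passes to the limit by continuity (and you correctly observe that only $\lambda>-1$ is needed here, since the geometry is fixed), so everything reduces to lower semicontinuity of the bulk term. That last step, however, fails as you have written it. The fields $\n_k$ are not Sobolev functions: they are genuinely $SBV$, jumping from unit length to $0$ across $\partial^*\Om$ and, in general, also across the inner boundary $\Gamma$. The Acerbi--Fusco theorem you invoke requires $\n_k\weak\n$ weakly in $W^{1,2}$; what you actually have is $\n_k\to\n$ in $L^1$ together with weak $L^2$ convergence of the \emph{absolutely continuous} parts $\nabla\n_k$ of the derivatives. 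These fields are not gradients of Sobolev functions (their distributional curl carries a jump contribution), and quasiconvexity of $\bm M\mapsto W(\bm s,\bm M)$ does \emph{not} imply weak lower semicontinuity of $\int W(\bm s,\cdot)\,dx$ along arbitrary weakly convergent $L^2$ fields --- that implication is special to Sobolev gradients. Nor can you localize to an open set where the $\n_k$ are Sobolev: $\Gamma$ is merely rectifiable, not closed, so in general no such localization exists, and this is precisely the situation the paper's $SBV$ framework is built to handle.

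The correct tool --- and the one the paper uses --- is the lower semicontinuity theorem for quasiconvex volume integrands in $SBV$ (Ambrosio's theorem, \cite[Theorem 5.29]{AFP}). Its additional hypotheses are available in your setup but must be invoked: the jump sets are uniformly controlled, $\hdue(J_{\n_k})\le\hdue(\partial^*\Om\cup\Gamma)<+\infty$ since $J_{\n_k}\tsub\partial^*\Om\cup\Gamma$, and the coercivity in the gradient variable required by that theorem, uniformly in $\bm s$ (which $W$ itself lacks because of the factor $\min\{|\bm s|^2,1\}$ in \eqref{eq:lower_upper_W_chol}, degenerating as $\bm s\to 0$), is recovered by applying the theorem to the perturbed integrand $W+\eps|\nabla\n|^2$ and then letting $\eps\to 0^+$, which is harmless thanks to the uniform bound on $\|\nabla\n_k\|_2$. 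With this replacement of the key lemma, the rest of your argument goes through and reproduces the paper's proof.
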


\begin{proof}
Let $(\n_k)_{k\in\N}$ be a minimizing sequence for the problem \eqref{eq:Etot} in $\ns(\Om;\Gamma)$. According to Remark \ref{rem:bulk}, we have
$$
\|\n_k\|_\hs^2=\int_\Om |\nabla \n_k|^2\,dx+\int_{\partial^*\Om\cup \Gamma}[|\n_k^+|^2+|\n_k^-|^2]\,d\hdue\le C \left\{\tilde\free[(\Om,\Gamma),\n_k]+\hdue(\partial^*\Om\cup \Gamma)+1\right\},
$$
where $C>0$ is independent of $k$. Consequently, $(\n_k)_{k\in\N}$ is bounded in $\hs(\Om,\Gamma)$. By Proposition \ref{prop:hs}, there exists $\n\in \ns(\Om,\Gamma)$ such that, up to a subsequence, the following holds:
$$
\nabla \n_k\weak \nabla \n\qquad\text{weakly in }L^2(\R^3;\Mtre)
$$
$$
\n_k^\pm \to \n^\pm\qquad\text{strongly in }L^2(\partial^*\Om\cup\Gamma;\R^3),
$$
and
$$
\n_k\to \n\qquad\text{strongly in }L^1(\R^3;\R^3).
$$
Thanks to point (b) in Remark \ref{rem:bulk} and by the lower semicontinuity result for quasiconvex bulk energies in SBV (c.f. \cite[Theorem 5.29]{AFP} applied to $W+\eps |\nabla \n|^2$ for sufficiently small $\eps$), we obtain
$$
\int_\Om W(\n,\nabla \n)\,dx\le \liminf_{k\to+\infty} \int_\Om W(\n_k,\nabla \n_k)\,dx.
$$
Furthermore, since traces $\n_k^\pm$ converge strongly in $L^2(\partial^*\Om\cup\Gamma;\R^3)$, it follows that
$$
\lim_{k\to+\infty}\int_{\partial^*\Om}g(\n_k,\normal)\,d\hdue=\int_{\partial^*\Om}g(\n,\normal)\,d\hdue
$$
and
$$
\lim_{k\to+\infty}
\int_{\Gamma}[g(\n^+_k,\normal)+g(\n^-_k,\normal)]\,d\hdue=
\int_{\Gamma}[g(\n^+,\normal)+g(\n^-,\normal)]\,d\hdue.
$$
Hence, $\n$ is a minimum point for $\tilde\free[(\Om,\Gamma),\cdot]$, thereby concluding the proof.
\end{proof}

\section{Stable configurations and the main result}
\label{sec:main}
Given $m>0$, we set
$$
\As_m(\R^3):=\{(\Om,\Gamma)\in \As(\R^3), |\Om|=m\}
$$
and
$$
\As^{reg}_m(\R^3):=\{\text{$A\subset\R^3$ open set with a smooth boundary, with $|A|=m$}\}.
$$


The main result of the paper can be stated as follows.

\begin{theorem}[\bf Existence of stable configurations]
\label{thm:main2}
Assume that Ericksen's inequalities \eqref{eq:eineq} hold, and let the constants of the surface energy satisfy $\gamma>0$ and $-\frac{1}{2}\le \lambda\le 1$. Then the minimum problem
\begin{equation}
\label{eq:mainpb2}
\min_{\As_m(\R^3)} \tilde\free
\end{equation}
is well posed and every minimizer $(\Om,\Gamma)$ is such that $\Om$ is bounded. Moreover,
\begin{equation}
\label{eq:reg-eq2}
\min_{\As_m(\R^3)} \tilde\free=\inf_{\As_{m}^{reg}(\R^3)} \free.
\end{equation}
\end{theorem}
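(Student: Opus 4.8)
The plan is to combine the Direct Method for the existence part with a two-sided inequality for the identity \eqref{eq:reg-eq2}. For existence I take a minimizing sequence $(\Om_k,\Gamma_k)\in\As_m(\R^3)$ and, for each $k$, an optimal director $\n_k\in\Ns(\Om_k,\Gamma_k)$ supplied by Theorem \ref{th:th_opt_director}. Since $|\n_k|=1$ a.e. and $\lambda\ge-\tfrac12$, the surface density satisfies $g\ge\gamma/2>0$, so the surface term dominates $\tfrac{\gamma}{2}\hdue(\partial^*\Om_k)+\gamma\,\hdue(\Gamma_k)$; as the energy is bounded along the sequence, this yields the uniform bound $\sup_k\{\hdue(\partial^*\Om_k)+\hdue(\Gamma_k)\}<+\infty$ announced in the introduction, together with a uniform $\hs$-bound on $\n_k$ through the lower estimate in \eqref{eq:lower_upper_W_chol}.

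For the sets $\Om_k$ I run a concentration–compactness (vanishing/dichotomy/compactness) argument adapted to sets of finite perimeter, exploiting the translation invariance of $\tilde\free$ and the fixed constraint $|\Om_k|=m$. Vanishing is excluded by the isoperimetric inequality, and dichotomy is ruled out by the strict subadditivity of the optimal shape energy as a function of the prescribed volume: splitting mass into pieces escaping to infinity cannot decrease the energy. Hence, after translating, $1_{\Om_k}\to1_\Om$ in $L^1(\R^3)$ with $|\Om|=m$, with all configurations eventually confined to a fixed ball $D=B_R$, and $\hdue(\partial^*\Om)\le\liminf_k\hdue(\partial^*\Om_k)$; the same subadditivity argument, applied to a putative minimizer whose mass sits at mutually diverging locations, forces \emph{every} minimizer to be bounded. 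Applying Theorem \ref{thm:sigma2} to the combined rectifiable sets $\partial^*\Om_k\cup\Gamma_k$ in $D$ and invoking condition (a) of $\sigma^2$-convergence componentwise (each component of $\n_k$ is bounded by $1$), together with the $L^1$-convergence of $1_{\Om_k}$, I identify a limit configuration $(\Om,\Gamma)\in\As_m(\R^3)$ and a weak limit $\n$ with $J_{\n}\tsub\partial^*\Om\cup\Gamma$; the pointwise constraints $\n=0$ in $\Om^c$ and $|\n|=1$ in $\Om$ pass to the limit via the $L^1$-convergence, so $\n\in\Ns(\Om,\Gamma)$. Lower semicontinuity of the bulk term follows from quasiconvexity exactly as in the proof of Theorem \ref{th:th_opt_director}, while lower semicontinuity of the surface term along this combined convergence is the content of Theorem \ref{thm:lsc}, whose validity rests on the structural property of $g$ recorded in Lemma \ref{lem:glambda} for $-\tfrac12\le\lambda\le1$. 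Summing the two estimates shows that $(\Om,\Gamma)$ realizes the infimum, so \eqref{eq:mainpb2} is well posed.

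It remains to prove \eqref{eq:reg-eq2}. The inequality $\min_{\As_m}\tilde\free\le\inf_{\As^{reg}_m}\free$ is immediate: any smooth $A$ with $|A|=m$ gives $(A,\emptyset)\in\As_m(\R^3)$ with $\tilde\free(A,\emptyset)=\free(A)$ by Remark \ref{rem:reg-reduce}, so the minimum over the larger class does not exceed the infimum over smooth domains. For the reverse inequality I approximate any admissible configuration in energy by smooth ones: given $(\Om,\Gamma)$ with optimal director $\n$ and $\eps>0$, I construct a smooth $A$ with $|A|=m$ and $\free(A)\le\tilde\free(\Om,\Gamma)+\eps$. First I use the interior approximation of Proposition \ref{pro:comitorres} to replace $\Om$ by smooth superlevel sets $A_{k,t}$ with $\hdue(\partial A_{k,t})\to\hdue(\partial^*\Om)$; then, following the relaxation construction of \cite{braides:relaxation}, I open the inner crack $\Gamma$ into a thin tubular hole removed along the normal $\normal$, whose two nearby smooth faces carry surface energy approximating $g(\n^+,\normal)+g(\n^-,\normal)$. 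On the resulting smooth domain I mollify $\n$ away from the removed neighborhood and compose with a retraction onto $\mathbb S^2$, which keeps the bulk energy close by \eqref{eq:lower_upper_W_chol} since $|\n|=1$, and finally a small dilation restores $|A|=m$. Sending the tube radius, the mollification scale, and the level $t$ to $0$ in the right order gives the claimed bound.

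The hard part is twofold, and both difficulties live in the two technical inputs. First, the lower semicontinuity of the surface energy under the \emph{simultaneous} convergence of the outer reduced boundaries $\partial^*\Om_k$ (in perimeter) and the inner cracks $\Gamma_k$ (in $\sigma^2$), with a density $g$ depending on the trace values $\n^\pm$ as well as on $\normal$: the admissible range $-\tfrac12\le\lambda\le1$ is precisely what makes the relevant trace-and-normal-dependent density satisfy the subadditivity/BV-ellipticity condition needed for lower semicontinuity (Lemma \ref{lem:glambda}, Theorem \ref{thm:lsc}). Second, the density step, where the obstructions flagged in the introduction are genuine: the director is $\mathbb S^2$-valued, so mollification must be composed with a retraction onto the sphere, and the surface energy is normal-sensitive, so the crack-opening must produce faces whose normals converge to $\normal$—controlling both while keeping the bulk energy under control is delicate. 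By comparison, the concentration–compactness exclusion of dichotomy (hence boundedness of minimizers) is more standard, though it still requires verifying the strict subadditivity of the optimal energy in the volume.
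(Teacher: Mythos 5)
Your outline coincides with the paper's strategy (Direct Method with a concentration--compactness alternative, identification of the limit via $\sigma^2$-convergence packaged in Theorem \ref{thm:lsc} together with Lemma \ref{lem:glambda}, and a Braides-type density argument for \eqref{eq:reg-eq2}), but two steps contain genuine gaps.

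\textbf{Boundedness of minimizers.} Your claim that boundedness follows from ``the same subadditivity argument, applied to a putative minimizer whose mass sits at mutually diverging locations'' does not work. An unbounded minimizer need not exhibit dichotomy: since $|\Om|=m<+\infty$, the tail mass $h(t)=|\Om\cap\{x_1>t\}|$ tends to $0$ as $t\to+\infty$, so the escaping piece has vanishing volume, which is precisely the regime \emph{not} covered by the dichotomy exclusion. Quantitatively, cutting at $\{x_1=t\}$ and rescaling, the subadditivity gain is of order $h(t)^{1/3}\beta$, while the cost of the cut is of order $\hdue(\Om\cap\{x_1=t\})=-h'(t)$, and neither dominates the other a priori. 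The paper's Step 4 resolves this with a separate argument: minimality plus the isoperimetric inequality applied to $\Om\cap\{x_1>t\}$ yield the differential inequality $h(t)^{2/3}\le -\tilde C\,h'(t)$ for large $t$, whose integration contradicts $h>0$ on a half-line. Some argument of this type is indispensable; subadditivity alone is not enough. Relatedly, your assertion that the minimizing sequence is ``eventually confined to a fixed ball $D=B_R$'' is neither justified (tails of vanishing mass may escape to infinity) nor needed, since Theorem \ref{thm:lsc} only requires $1_{\Om_k}\to 1_\Om$ in $L^1(\R^3)$ and localizes internally by a diagonal argument over bounded sets.

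\textbf{The density step for \eqref{eq:reg-eq2}.} Two problems. First, for a general rectifiable set $\Gamma$ with $\hdue(\Gamma)<+\infty$ one cannot simply ``open the inner crack into a thin tubular hole'' whose boundary area approximates $2\hdue(\Gamma)$: the measure $\hdue(\{d_\Gamma=s\})$, i.e.\ the Minkowski content of $\Gamma$, can be strictly larger than (indeed unrelated to) $2\hdue(\Gamma)$ for rough rectifiable sets. This is exactly why the paper first replaces the jump set by regularized sets $R_n$ whose Minkowski content equals their $\hdue$-measure (Lemma \ref{lem:BC}), a fact that rests on solving an auxiliary minimum problem and invoking the De Giorgi--Carriero--Leaci density lower bounds for $\mathbb S^2$-valued quasi-minimizers from \cite{CaLe}; only then does the coarea argument in Step 3 of Theorem \ref{thm:approx} give $\limsup_k \hdue(\partial A_k)\le \hdue(\partial^*\Om)+2\hdue(\Gamma)$. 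Without this regularization your construction has no control on the added perimeter. Second, the proposed ``mollify $\n$ and compose with a retraction onto $\mathbb S^2$'' is both unnecessary and hazardous: $\free(A)$ is defined as a minimum over $W^{1,2}(A;\mathbb S^2)$, so the approximating director need not be smooth (the paper just restricts $\n$, respectively the approximants of Lemma \ref{lem:BC}, to the smooth subdomains), and mollification followed by projection onto the sphere fails near point defects --- recall that smooth maps are in general \emph{not} dense in $W^{1,2}$ with values in $\mathbb S^2$ --- so the bulk-energy control you claim via \eqref{eq:lower_upper_W_chol} cannot be established along that route. A further, more minor, point: the convergence of the normal-dependent surface terms in the approximation is not automatic from trace convergence; the paper extracts it from the lower semicontinuity inequality \eqref{eq:superficie} tested with both positive and negative values of the anchoring parameter, combined with convergence of the total areas.
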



A similar result (with a simpler proof) holds for configurations contained in a given convex (or alternatively smooth) bounded box $D$, which can be considered a {\it design region}. We refer the reader to Theorem \ref{thm:main}.

\section{Some technical results}
\label{sec:tech}
In this section, we collect several technical results that will lead us to the proof of our main theorem. While these results are generally applicable in $d\geq2$ dimension (generalizing the notions of admissible configurations and the associated space of directors to $\R^d$), we focus on the three-dimensional case, as it is specifically relevant to our application to liquid crystals.


\subsection{General properties of the space of directors}
\label{subsec:basic}

In this subsection, we provide the proof of Proposition \ref{prop:hs} concerning the space $\hs(\Om,\Gamma)$, where $(\Om,\Gamma)\in \As(\R^3)$.

\begin{proof}[Proof of Proposition \ref{prop:hs}]
We divide the proof in three steps.

\vskip10pt\noindent
{\bf Step 1.} 
The Sobolev embedding of $BV(\R^3;\R^3)$ into $L^{\frac{3}{2}}(\R^3;\R^3)$, combined with the Cauchy-Schwarz inequality, provides the following estimate for every $\uu\in \hs(\Om,\Gamma)$ (recall that $\Om$ has finite measure)
\begin{multline}
\label{eq:boundhs}
\|\uu\|_{L^{\frac{3}{2}}(\R^3;\R^3)}\le C_3 |D\uu|(\R^3)=C_3 \left[ \int_{\R^3} |\nabla \uu|\,dx+\int_{J_{\uu}}|\uu^+-\uu^-|\,d\hs^{2}\right]\\
\le C_3 \left[ \int_{\Om} |\nabla \uu|\,dx+\int_{\partial^*\Om \cup \Gamma}[|\uu^+|+|\uu^-|]\,d\hs^{2}\right]\le C\sqrt{(\uu,\uu)_{\hs}}.
\end{multline}
Here $C$ is a constant that depends on $|\Om|$ and $\hs^{2}(\partial^*\Om\cup\Gamma)$.
Since $\Om$ has finite measure, inequality \eqref{eq:boundhs} yields
\begin{multline*}
\|\uu\|_{BV}\le \|\nabla \uu\|_{1}
+\int_{\partial^*\Om\cup \Gamma}\left[|\uu^+|+|\uu^-|\right]\,d\hdue +\|\uu\|_{1} \\
\le C\left[ \|\nabla \uu\|_{2}+\|\uu^+\|_{2}+\|\uu^-\|_{2}\right]^{1/2}=C\|\uu\|_\hs,
\end{multline*}
which completes the proof of point (b). Point (c) follows directly from the local compact embedding of $BV$ into $L^1$.

\vskip10pt\noindent
{\bf Step 2.} The pairing \eqref{eq:scalarpd} is clearly a scalar product (recall \eqref{eq:boundhs}). To complete the proof of point (a), let us verify that Cauchy sequences in $\hs(\Om,\Gamma)$ are convergent. Let $(\uu_n)_{n\in\N}$ be a Cauchy sequence in $\hs(\Omega,\Gamma)$. Then, there exist 
$\bm\Phi\in L^2(\R^3;\Mtre)$, $\bm\alpha^+,\bm \alpha^-\in L^2(\partial^*\Om\cup\Gamma;\R^3)$ with $\Phi=0$ a.e. on $\Om^c$, such that
$$
\nabla \uu_n\to \bm\Phi\qquad\text{strongly in }L^2(\R^3;\Mtre),
$$
and
$$
\uu^\pm_n\to \bm \alpha^\pm  \qquad\text{strongly in }L^2(\partial^*\Om\cup\Gamma;\R^3).
$$
In view of \eqref{eq:bvbound}, we can write
$$
\|\uu_n-\uu_m\|_{BV}\le C\|\uu_n-\uu_m\|_\hs\to 0,
$$
which implies that there exists $\uu \in BV(\R^3;\R^3)$ such that
$$
\uu_n\to \uu\qquad\text{strongly in }BV(\R^3;\R^3).
$$
It follows immediately that $\uu=0$ a.e. on $\Om^c$, and $\bm\Phi=\nabla \uu$. Moreover, $\bm\alpha^\pm=\uu^\pm$ thanks to the continuity of the local trace operators that determines the values on $\partial^*\Om\cup \Gamma$ (see Remark \ref{rem:traceG}). Since $\uu \in SBV(\R^3;\R^3)$ ($SBV$ is a closed subset of $BV$ in the strong topology), we get that $\uu \in \hs(\Om,\Gamma)$ and that $\uu_n\to \uu$ in the associated norm, from which completeness follow.

\vskip10pt\noindent
{\bf Step 3.}
Concerning point (d), let $(\n_k)_{k\in\N}$ be a sequence in $\ns(\Om,\Gamma)$ and assume that $\n_k\weak \n$ weakly in $\hs(\Om,\Gamma)$. By point (c), it follows that 
$$
\n_k\to \n\qquad\text{strongly in }L^1(\R^3;\R^3).
$$
Consequently, $\n=0$ a.e. on $\Om^c$ and $|\n|=1$ a.e. on $\Om$, ensuring that $\n\in \ns(\Om,\Gamma)$.
\par
We now address the convergence of the traces. According to Remark \ref{rem:traceG}, the result follows from applying the following theorem concerning traces of $SBV$ functions to the local trace operators involved in characterizing the traces on $\partial^*\Om\cup\Gamma$.
\par
Let $A\subset \R^3$ be an open, bounded set with a Lipschitz boundary, and let $K\subset A$ be a rectifiable set with $\hdue(K)<+\infty$. Assume that $(u_n)_{n\in \N}$ is a sequence in $SBV(A)$ such that $J_{u_n}\tsub K$, $\|\nabla u_n\|_2+\|u_n\|_\infty\le C$ and
$u_n\to u$ strongly in $L^1(A)$ for some $u\in SBV(A)$.
Then
\begin{equation}
\label{eq:convL1gamma}
\gamma(u_n) \to \gamma(u)\qquad\text{strongly in }L^1(\partial A),
\end{equation}
where $\gamma$ is the trace operator from $BV(A)$ to $L^1(\partial A)$ (The convergence is indeed strong in every $L^p$ space, thanks to the uniform $L^\infty$-bound).
\par
To prove property \eqref{eq:convL1gamma} we proceed as follows. Note that also $\|\nabla u\|_2+\|u\|_\infty\le C$. Indeed, we have that $u_n\weakst u$ weakly* in $BV(A)$, with separate convergence for the absolutely continuous and jump parts of the derivative. Moreover, by the trace theory in $BV$, there exists $C_1>0$ such that for every $\eps>0$ and for every $u\in BV(A)$,
$$
\|\gamma(u)\|_{L^1(\partial A)}\le C_1\left[ |Du|(A_\eps)+\frac{1}{\eps}\|u\|_{L^1(A_\eps)}\right],
$$
where $A_\eps:=\{x\in A\,:\, dist(x,\partial A)<\eps\}$. We can then write
\begin{multline*}
\|\gamma(u_n)-\gamma(u)\|_{L^1(\partial A)}\\
\le C_1\left[ \int_{A_\eps} |\nabla u_n-\nabla u|\,dx+\int_{K\cap A_\eps} |(u_n^+-u^+)-(u_n^--u^-)|\,d\hdue+\frac{1}{\eps}\|u_n-u\|_{L^1(A)}\right]\\
\le C_2\left[ |A_\eps|^{1/2}+\hdue(K\cap A_\eps)+\frac{1}{\eps}\|u_n-u\|_{L^1(A)}\right].
\end{multline*}
Using the fact that $\eps$ is arbitrary, we get that \eqref{eq:convL1gamma} follows.
\end{proof}

\subsection{Compactness and lower semicontinuity results}
\label{sec:lsc}
Let us start with the following property concerning the surface energy density.

\begin{lemma}[\bf Convexity properties of the surface energy density]
\label{lem:glambda}
Let $-\frac{1}{2}\le \lambda\le 1$, and let $\hat g:\R^3\times \R^3 \to [0,+\infty]$ be given by
$$
\hat g(\bm v,\bm p):=
\begin{cases}
\gamma\left( |\bm p||\vv|^2+\lambda\frac{\left(\vv\cdot\bm p\right)^2}{|\bm p|}\right)&\text{if }\bm p\not=0\\
0 &\text{if }\bm p=0.
\end{cases}
$$
Then $\hat g$ is convex and positively one homogeneous in the second variable, with $\hat g=g$ on $\mathbb{S}^{2}\times \mathbb{S}^{2}$.
\end{lemma}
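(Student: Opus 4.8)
The plan is to check the three asserted properties separately; only convexity in the second variable carries any substance. The first two are immediate. On $\mathbb{S}^2\times\mathbb{S}^2$ we have $|\bm p|=|\vv|=1$, so $\hat g(\vv,\bm p)=\gamma\bigl(1+\lambda(\vv\cdot\bm p)^2\bigr)=g(\vv,\bm p)$ directly from \eqref{eq:gamma_a}. Positive one-homogeneity in $\bm p$ follows by inspection: replacing $\bm p$ by $t\bm p$ with $t>0$ multiplies both $|t\bm p|=t|\bm p|$ and $(\vv\cdot t\bm p)^2/|t\bm p|=t\,(\vv\cdot\bm p)^2/|\bm p|$ by $t$, whence $\hat g(\vv,t\bm p)=t\,\hat g(\vv,\bm p)$.

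For convexity in $\bm p$ I would fix $\vv$ and rewrite $\hat g$ as a single quadratic form divided by $|\bm p|$. Setting $A_\vv:=|\vv|^2\I+\lambda\,\vv\otimes\vv$, a symmetric matrix with $\bm p\trans A_\vv\bm p=|\vv|^2|\bm p|^2+\lambda(\vv\cdot\bm p)^2$, we obtain $\hat g(\vv,\bm p)=\gamma\,\bm p\trans A_\vv\bm p/|\bm p|$ for $\bm p\neq0$ (the case $\vv=0$ is trivial). Since $\lambda>-1$, the matrix $A_\vv$ is positive semidefinite, with eigenvalue $(1+\lambda)|\vv|^2$ along $\vv$ and eigenvalue $|\vv|^2$ on $\vv^{\perp}$. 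The problem therefore reduces to the general fact that, for a symmetric positive semidefinite matrix $A$, the positively one-homogeneous map $\phi(\bm p):=\bm p\trans A\bm p/|\bm p|$ is convex if and only if $2\mu_{\min}(A)\ge\mu_{\max}(A)$. For $A_\vv$ this condition reads $2|\vv|^2\ge(1+\lambda)|\vv|^2$ when $\lambda\ge0$ and $2(1+\lambda)|\vv|^2\ge|\vv|^2$ when $\lambda<0$, i.e.\ exactly $-\tfrac12\le\lambda\le1$, independently of $\vv$.

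It remains to establish the general fact, which I would do via the Hessian. The function $\phi$ is smooth and positively one-homogeneous on $\R^3\setminus\{0\}$ and continuous at the origin, so its convexity is equivalent to $\nabla^2\phi\succeq0$ off the origin (a standard property of continuous positively one-homogeneous functions). A direct differentiation, together with Euler's identity $\nabla^2\phi(\bm p)\,\bm p=0$, shows that for a unit vector $\bm p$ and any $\bm w\perp\bm p$ one has $\bm w\trans\nabla^2\phi(\bm p)\,\bm w=2\,\bm w\trans A\bm w-\bm p\trans A\bm p$. Hence the Hessian is positive semidefinite exactly when $2\,\bm w\trans A\bm w\ge\bm p\trans A\bm p$ for all orthonormal pairs $(\bm p,\bm w)$. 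Sufficiency of $2\mu_{\min}\ge\mu_{\max}$ follows from the chain $2\,\bm w\trans A\bm w\ge2\mu_{\min}\ge\mu_{\max}\ge\bm p\trans A\bm p$, and necessity by testing $\bm p$ a top eigenvector against $\bm w$ a bottom eigenvector, which are orthogonal as soon as the two extremal eigenvalues differ.

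I expect the main obstacle to be the delicate equivalence between convexity and positive semidefiniteness of the Hessian for a map that is only one-homogeneous (hence non-differentiable at the origin): passing convexity through the origin requires exploiting continuity and homogeneity rather than a pointwise Hessian bound. The Hessian computation itself, though routine, must be organized so that the clean identity $\bm w\trans\nabla^2\phi(\bm p)\,\bm w=2\,\bm w\trans A\bm w-\bm p\trans A\bm p$ emerges on $\bm p^{\perp}$, which is what makes the reduction to the eigenvalue inequality transparent.
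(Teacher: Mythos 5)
Your proof is correct, and it reaches the convexity statement by a genuinely different decomposition than the paper's. Both arguments ultimately verify positive semidefiniteness of a Hessian away from the origin, but the paper proceeds by dimension reduction: after factoring out $\gamma|\vv|^2$, it proves convexity of the planar function $h(\bm x)=|\bm x|+\lambda x_1^2/|\bm x|$ through the explicit computation $D^2h=\frac{(1-\lambda)x_1^2+(1+2\lambda)x_2^2}{|\bm x|^3}\left(\I-\frac{\bm x\otimes\bm x}{|\bm x|^2}\right)$, and then restricts the three-dimensional function to an arbitrary two-dimensional subspace $V_2$, replacing the unit vector $\bm e=\vv/|\vv|$ by its orthogonal projection $\bm e_1$ onto $V_2$; this replaces $\lambda$ by $\lambda|\bm e_1|^2$, which stays in $[-\frac12,1]$, so the planar case applies. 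You instead work intrinsically in $\R^3$: writing $\hat g(\vv,\p)=\gamma\,\p\trans A_\vv\p/|\p|$ with $A_\vv=|\vv|^2\I+\lambda\,\vv\otimes\vv$, you reduce everything to a spectral criterion --- $\p\mapsto\p\trans A\p/|\p|$ is convex if and only if $2\mu_{\min}(A)\ge\mu_{\max}(A)$ --- proved via Euler's identity $\nabla^2\phi(\p)\p=0$ and the identity $\bm w\trans\nabla^2\phi(\p)\bm w=2\,\bm w\trans A\bm w-\p\trans A\p$ for orthonormal pairs $(\p,\bm w)$; I checked both this identity and the evaluation of the criterion on the eigenvalues $(1+\lambda)|\vv|^2$ and $|\vv|^2$ of $A_\vv$, and they are right. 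Your route buys generality: an if-and-only-if characterization for arbitrary positive semidefinite $A$ in any dimension, which makes the sharpness of the window $-\frac12\le\lambda\le1$ transparent and covers at once the two-dimensional case recorded in Remark \ref{rem:surf-dim}. The paper's route buys elementarity: all the analysis is confined to one explicit $2\times2$ Hessian, with no appeal to a general matrix lemma. Finally, both proofs rest on the same delicate point, which you correctly isolate rather than gloss over: a continuous, positively one-homogeneous function whose Hessian is positive semidefinite off the origin is convex, where passing segments through the origin requires the continuity/homogeneity argument (the paper treats this step as self-evident, asserting only that ``it suffices to check that outside the origin the hessian is positive semidefinite'').
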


\begin{proof}
We can assume $\bm v\not=\bm 0$. Since for $\bm p\not=\bm 0$ we have
\begin{equation*}
\hat{g}(\vv,\bm p)=\gamma |\vv|^2\left(|\bm p|+\lambda\frac{\left(\frac{\vv}{|\vv|}\cdot\bm p\right)^2}{|\bm p|}\right),
\end{equation*}
it suffices to concentrate on the convexity of the function
\begin{equation}
\label{eq:extension_g}
\p\to|\bm p|+\lambda\frac{\left(\bm e\cdot\bm p\right)^2}{|\bm p|},
\end{equation}
where $\bm e:=\frac{\vv}{|\vv|}$.
We divide the proof into two steps.

\vskip10pt\noindent{\bf Step 1.} Let us show that the function $h:\R^2\to [0,+\infty[$ defined as
$$
h(\bm x)=h(x_1,x_2):=
\begin{cases}
|\bm x|+\lambda\frac{x_1^2}{|\bm x|},&\text{if }\bm x\not=0\\
0 &\text{if }\bm x=0
\end{cases}
$$
is convex. It suffices to check that outside the origin the hessian is positive semidefinite. A straightforward computation shows that 
$$
D^2h=\frac{(1-\lambda)x_1^2+(1+2\lambda)x_2^2}{|\bm x|^3}\bm B,\qquad\text{with } \bm B:=\left(\bm{\mathrm{I}}-\frac{\bm x\otimes\bm x}{|\bm x|^2}\right).
$$
The result follows since $\bm B$ is positive definite and $\lambda \in [-\frac{1}{2},1]$.

\vskip10pt\noindent{\bf Step 2.} Let $V_2$ be a two dimensional subspace of $\R^3$. If $V_2$ contains $\bm e$, then, in view of Step 1, the map \eqref{eq:extension_g} is convex on $V_2$. If $\bm e\not\in V_2$, let $\bm e_1$ denote the orthogonal projection of $\bm e$ onto $V_2$; for $\bm p\in V_2$ with $\bm p\not=0$  we can write
$$
|\bm p|+\lambda\frac{\left(\bm e\cdot\bm p\right)^2}{|\bm p|}=|\bm p|+\lambda |\bm e_1|^2\frac{\left(\frac{\bm e_1}{|\bm e_1|}\cdot\bm p\right)^2}{|\bm p|},
$$
which is again convex on $V_2$ by Step 1 since $\lambda |\bm e_1|^2\in [-1/2,1]$.
\end{proof}

\begin{remark}
\label{rem:surf-dim}
{\rm
The previous result holds in general dimension, in particular also in dimension two.
}
\end{remark}

The following compactness and lower semicontinuity result will be fundamental for our analysis.

\begin{theorem}
\label{thm:lsc}
Let $(\Omega_k,\Gamma_k)_{k\in\N}$ be a sequence in $\mathcal{A}(\R^3)$ such that 
\begin{equation}
\label{eq:boundProb}
\hdue(\partial^*\Om_k\cup\Gamma_k)\le C 
\end{equation}
for some positive constant $C>0$ independent of $k$. Assume that as $k\to +\infty$ 
\begin{equation}
\label{eq:s-omega_n}
1_{\Om_k}\to 1_\Om\qquad\text{strongly in }L^1(\R^3)
\end{equation}
for some set of finite perimeter $\Om\subset \R^3$. 
\par
Then there exists $\Gamma\subset \R^3$ rectifiable with $(\Omega,\Gamma)\in\mathcal{A}(\R^3)$ and such that
the following property holds true: if $(\n_k)_{k\in\N}$ is a sequence such that each $\n_k\in \ns(\Om_k,\Gamma_k)$  with $(\nabla \n_k)_{k\in\N}$ bounded in $L^2(\R^3;\Mtre)$, then there exists $\n\in \ns(\Om,\Gamma)$ such that, up to a subsequence,
$$
\n_k\to  \n\quad\text{strongly in $L^1(\R^3;\R^3)$},
$$
$$
\nabla \n_k\weak \nabla \n\quad\text{weakly in $L^2(\R^3;\Mtre)$},
$$
and
\begin{equation}
\label{eq:superficie}
\int_{\partial^*\Omega\cup\Gamma}[\varphi(\n^+,\normal)+\varphi(\n^-,\normal)]\:d\hdue\le\liminf_{k\to+\infty}\int_{\partial^*\Omega_k\cup\Gamma_k}[\varphi(\n^+_k,\normal_k)+\varphi(\n^-_k,\normal_k)]\:d\hdue,
\end{equation}
for every continuous function $\varphi=\varphi(\bm v,\bm p):\R^3\times \R^3\to [0,+\infty[$, even, convex and positively one homogeneous in $\bm p$.
\end{theorem}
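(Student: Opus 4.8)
The plan is to argue by the Direct Method: first extract a limiting rectifiable set from the interfaces via $\sigma^2$-convergence and define $\Gamma$, then obtain compactness for the directors in $SBV$, and finally prove the surface lower semicontinuity \eqref{eq:superficie}, which is the genuinely delicate point. For the first step I set $\Sigma_k:=\partial^*\Om_k\cup\Gamma_k$, which by \eqref{eq:boundProb} are rectifiable with $\hdue(\Sigma_k)\le C$. Exhausting $\R^3$ by balls and using a diagonal argument, I apply Theorem \ref{thm:sigma2} to obtain a rectifiable $\Sigma$ with $\hdue(\Sigma)\le\liminf_k\hdue(\Sigma_k)$ and $\Sigma_k\to\Sigma$ in the $\sigma^2$ sense. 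Since $1_{\Om_k}\in SBV$ with $\nabla 1_{\Om_k}=0$, $\|1_{\Om_k}\|_\infty\le 1$, $J_{1_{\Om_k}}=\partial^*\Om_k\tsub\Sigma_k$, and $1_{\Om_k}\to 1_\Om$ in $L^1$ by \eqref{eq:s-omega_n}, condition (a) of Definition \ref{def:sigma2} gives $\partial^*\Om=J_{1_\Om}\tsub\Sigma$. I then set $\Gamma:=(\Sigma\setminus\partial^*\Om)\cap\Om^1$, which is rectifiable, contained in $\Om^1$, and of finite $\hdue$-measure, so that $(\Om,\Gamma)\in\As(\R^3)$ and $\partial^*\Om\cup\Gamma\tsub\Sigma$.

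For the directors, I note that $|\n_k|=1_{\Om_k}$, whence $\|\n_k\|_\infty\le 1$, while $J_{\n_k}\tsub\Sigma_k$ with $|\n_k^+-\n_k^-|\le 2$ and $\hdue(\Sigma_k)\le C$ yield a uniform $BV$ bound. By the $SBV$ closure and compactness theorem (see \cite{AFP}) I obtain, up to a subsequence, $\n_k\to\n$ in $L^1_{\loc}$ with $\n\in SBV$ and $\nabla\n_k\weak\nabla\n$ weakly in $L^2$. Tightness follows from $|\n_k|=1_{\Om_k}$ together with $|\Om|<+\infty$ and \eqref{eq:s-omega_n}, which upgrades the convergence to $L^1(\R^3;\R^3)$; passing to the limit in $|\n_k|=1_{\Om_k}$ gives $|\n|=1$ a.e.\ on $\Om$ and $\n=0$ a.e.\ on $\Om^c$. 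Applying condition (a) of $\sigma^2$-convergence to each scalar component of $\n_k$ gives $J_\n\tsub\Sigma$; since $\n$ vanishes on $\Om^0$, no jump of $\n$ can sit there, and therefore $J_\n\tsub\partial^*\Om\cup\Gamma$, i.e.\ $\n\in\ns(\Om,\Gamma)$.

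For the surface inequality \eqref{eq:superficie} I would exploit the duality representation of the integrand: being continuous, convex, even and positively $1$-homogeneous in $\bm p$, one can write $\varphi(\bm v,\bm p)=\sup_{j\in\N}\bm\psi_j(\bm v)\cdot\bm p$ with continuous coefficients $\bm\psi_j$. A standard localization argument (supremum of the associated measures over finite Borel partitions) reduces \eqref{eq:superficie} to bounding, for finitely many disjoint pieces $B_l\subset\partial^*\Om\cup\Gamma$ and a choice of indices, the sum of linear flux integrals $\sum_l\int_{B_l}\bm\psi_{j_l}(\n^\pm)\cdot\normal\,d\hdue$ from above by $\liminf_k\int_{\Sigma_k}[\varphi(\n_k^+,\normal_k)+\varphi(\n_k^-,\normal_k)]\,d\hdue$. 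Each linear term I would transfer to a bulk expression through the Gauss--Green structure of $BV$ functions, so that the strong $L^1$-convergence of $\n_k$ (hence of $\bm\psi_j(\n_k)$, by continuity and the $L^\infty$-bound) and the weak convergence of the relevant measures allow passage to the limit, while the $\sigma^2$-convergence guarantees that the limiting fluxes remain supported on $\partial^*\Om\cup\Gamma\tsub\Sigma$.

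The hard part will be exactly this last step, because the integrand is evaluated on the moving surfaces $\Sigma_k$ through the trace values $\n_k^\pm$, which do not converge pointwise (the trace convergence of Proposition \ref{prop:hs}(d) is available only for a fixed geometry). The most delicate contribution is the part of $\Gamma_k$ on which $\n_k^+=\n_k^-$: there the director is approximately continuous, yet still carries the weighted area $2\varphi(\n_k^+,\normal_k)$, so this set escapes $J_{\n_k}$ entirely. I would split $\Sigma_k$ into $\partial^*\Om_k$, the jumping part of $\Gamma_k$, and this non-jumping part. On $\partial^*\Om_k$ the term $\varphi(0,\cdot)$ is a classical anisotropic perimeter, lower semicontinuous by Reshetnyak's theorem; the genuinely jumping parts are controlled by vectorial $SBV$ surface lower semicontinuity under the constraint $J_\n\tsub\partial^*\Om\cup\Gamma$ secured above; and the weighted area on the non-jumping part is handled from below by combining the area lower semicontinuity of $\sigma^2$-convergence (Theorem \ref{thm:sigma2}) with the uniform continuity of the weights $\bm\psi_j$ and the $L^1$-convergence of $\n_k$. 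The convexity range $-\tfrac12\le\lambda\le 1$ enters through Lemma \ref{lem:glambda}, ensuring that the concrete integrand $\hat g$ belongs to the admissible class of $\varphi$.
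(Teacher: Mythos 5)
Your Steps 1--2 (construction of $\Gamma$ from the $\sigma^2$-limit $\Sigma$ of $\Sigma_k:=\partial^*\Om_k\cup\Gamma_k$, compactness of the directors via Ambrosio's theorem, tightness from $|\n_k|=1_{\Om_k}$, and $J_{\n}\tsub\partial^*\Om\cup\Gamma$ since no jump can sit in $\Om^0$) coincide with the paper's proof and are sound. The genuine gap is in the surface inequality \eqref{eq:superficie}, exactly at the point you flag as ``the hard part''. A duality/Gauss--Green argument applied to $\n_k$ and $\n$ (equivalently, classical SBV lower semicontinuity of jump energies) only sees the jump sets $J_{\n_k}$ and $J_{\n}$. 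But both sides of \eqref{eq:superficie} integrate over surfaces that may be strictly larger than these jump sets: on the right, the part of $\Gamma_k$ where $\n_k^+=\n_k^-$, and --- fatally for your argument --- on the left, the part of $\partial^*\Om\cup\Gamma$ where $\n^+=\n^-$. Take $\Om_k=\Om$ a fixed ball, $\n_k=\n$ a fixed constant director and $\Gamma_k=\Gamma$ a fixed disc: the left-hand side contains $\int_\Gamma 2\varphi(\n,\normal)\,d\hdue>0$, while every distributional derivative you can form from $\n$ or from $\bm\psi_j(\n)$ vanishes near $\Gamma$; no flux identity can produce this term. Your proposed repair for the non-jumping part is also insufficient: the integrand is anisotropic (it depends on $\normal_k$) and weighted by traces on the moving surfaces, whereas Theorem \ref{thm:sigma2} gives only the global unweighted bound $\hdue(\Sigma)\le\liminf_k\hdue(\Sigma_k)$; moreover, your three-way splitting of $\Sigma_k$ induces no corresponding splitting of the limit surface (a piece of $\Gamma$ where $\n$ jumps may arise from non-jumping pieces of $\Gamma_k$, or from two sheets of $\partial^*\Om_k$ collapsing onto each other), so part-by-part liminf estimates cannot be reassembled into \eqref{eq:superficie}.

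The missing idea --- the heart of the paper's proof --- is property (b) of $\sigma^2$-convergence, which you never invoke. It provides $w_k,w\in SBV(D)$ with $J_{w_k}\tsub\Sigma_k\cap D$, $J_w\teq K\cap D$, $w_k\to w$ in $L^2(D)$ and $\nabla w_k\weak\nabla w$ in $L^2(D;\R^3)$; setting $v_k:=(w_k+\eps)1_{\Om_k\cap D}$ and $v:=(w+\eps)1_{\Om\cap D}$ one gets, for a.e.\ $\eps$, $J_{v_k}\tsub\Sigma_k\cap D$ and $J_v\teq(\partial^*\Om\cup\Gamma)\cap D$. Then one perturbs the directors, $\bm w_k:=\n_k+\eps v_k\e_1$ and $\bm w:=\n+\eps v\e_1$, so that (Remark \ref{rem:jumpsum}) the \emph{limit} function jumps on essentially the \emph{whole} limit surface while the approximants jump inside $\Sigma_k\cap D$. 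Now the classical SBV lower semicontinuity theorem \cite[Theorem 5.22, Example 5.23]{AFP} --- whose proof is precisely the duality you describe --- applies to $\bm w_k\to\bm w$; since $\varphi\ge 0$, the right-hand integrals over $J_{\bm w_k}$ can be enlarged to $\Sigma_k\cap D$, and letting $\eps\to0^+$ (using uniform continuity of $\varphi$ on bounded sets, $\|v_k\|_\infty\le C$ and the bound \eqref{eq:boundProb}) and then exhausting by $D$ yields \eqref{eq:superficie}. In short: the perturbation converts ``surface integrals over moving surfaces larger than jump sets'' into genuine jump-set integrals, which is the step your scheme cannot reach with Gauss--Green on the directors alone.
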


\begin{proof}
We will make use of the notion of $\sigma^2$-convergence of rectifiable sets introduced in \cite{DMFT} (see Section \ref{sec:sigma2}). From \eqref{eq:boundProb} and Theorem \ref{thm:sigma2}, and employing a diagonal argument, it follows that there exists a rectifiable set $K\subset \R^3$  such that, up to a subsequence (not relabelled), for every open and bounded set $D\subset \R^3$ we have
$$
(\partial^*\Omega_k\cup\Gamma_k)\cap D \to K\cap D \qquad\text{in the sense of $\sigma^2$-convergence}.
$$
Since $J_{1_{\Omega_k \cap D}} \tsub (\partial^*\Omega_k\cup\Gamma_k)\cap D$ and $\nabla 1_{\Omega_k \cap D}=\nabla 1_{\Om\cap D}=0$, in view of \eqref{eq:s-omega_n} and of property (a) in Definition \ref{def:sigma2} of $\sigma^2$-convergence, we deduce that 
$$
\partial^*\Omega \cap D\tsub K\cap D.
$$
Since $D$ is arbitrary, we infer $\partial^*\Om\tsub K$. Now, let us decompose $K$ as 
$$
K=(K\cap\Omega^0)\cup\partial^*\Omega\cup(K\cap\Omega^1)
$$
and set
$$
\Gamma:=K\cap\Omega^1,
$$
so that $(\Om,\Gamma)\in \As(\R^3)$. 
\par
We divide the proof into two steps.

\vskip10pt\noindent{\bf Step 1.}
We claim that for every open and bounded set $D\subset \R^3$, there exist functions $v,v_k\in SBV(D)$ satisfying $\|v\|_\infty,\|v_k\|_\infty\le C$, $v=v_k=0$ a.e. outside $\Om$ and $\Om_k$ respectively, $\nabla v,\nabla v_k\in L^2(D;\R^3)$, $J_v \teq (\partial^*\Omega\cup\Gamma)\cap D$, $J_{v_k}\tsub (\partial^*\Omega_k\cup\Gamma_k)\cap D$, and such that
$$
v_k \to v\quad\text{strongly in $L^2(D)$},
$$
and
$$
\nabla v_k \weak \nabla v\quad\text{weakly in $L^2(D;\R^3)$}.
$$
According to the definition of $\sigma^2$-convergence, there exist $w,w_k\in SBV(D)$ satisfying $\|w\|_\infty,\|w_k\|_{\infty}\le C$, $J_w \teq K\cap D$, $J_{w_k}\tsub (\partial^*\Omega_k\cup\Gamma_k)\cap D$, and such that
$$
w_k\to  w \qquad\text{strongly in }L^1(D)
$$
and
$$
\nabla w_k\weak  \nabla w \qquad\text{weakly in }L^2(D;\R^3).
$$
The first convergence also holds strongly in $L^2(D)$ thanks to the uniform bound on $L^\infty$-norms. For $\varepsilon>0$ let us set
$$
v:=(w+\eps)1_{\Om\cap D}\qquad\text{and}\qquad v_k:=(w_k+\eps)1_{\Omega_k\cap D}.
$$
Clearly, we have
$$
\Gamma \cap D\tsub J_{w1_{\Omega\cap D}} \tsub (\partial^*\Omega\cup\Gamma)\cap D
$$
and so, for a.e. $\varepsilon>0$, we deduce (see Remark \ref{rem:jumpsum}) that
$$
J_v \teq J_{w1_{\Omega\cap D}}\cup J_{1_{\Omega\cap D}} \teq (\partial^*\Omega\cup\Gamma)\cap D.
$$
Since
$$
J_{v_k} \tsub (\partial^*\Omega_k\cup\Gamma_k)\cap D,
$$
$v_k\to v$ strongly in $L^2(D)$ and $\nabla v_k\weak \nabla v$ weakly in $L^2(D;\R^3)$, the claim follows by choosing $\eps$ outside a negligible set.

\vskip10pt\noindent{\bf Step 2.} We are now in a position to check the main property of the statement. 
\par
By point (b) in Proposition \ref{prop:hs}, the sequence $(\n_k)_{k\in\N}$ is bounded in $BV(\R^3;\R^3)$. Therefore, there exist $\n \in BV(\R^3;\R^3)$ such that $\n_k\weakst \n$ weakly* in $BV(\R^3;\R^3)$.
From \eqref{eq:s-omega_n}, we derive that
$$
\n_k\to \n\qquad\text{strongly in }L^1(\R^3;\R^3),
$$
implying, in particular, that
\begin{equation*}
\n=0 \quad \text{a.e. in $\Omega^c$}.
\qquad\text{and}\qquad
|\n|=1 \quad \text{a.e. in $\Omega$}.
\end{equation*}
Additionally, by locally applying Ambrosio's Theorem (since $\hdue(J_{\n_k})\le \hs(\partial^*\Om_k\cup \Gamma_k)\le C$), we infer $\n\in SBV(\R^3;\R^3)$ with
\begin{equation*}
\nabla \n_k\weak \nabla \n\qquad\text{weakly in }L^2(\R^3;\Mtre).
\end{equation*}
By the property of $\sigma^2$-convergence, it follows that
\begin{equation*}
J_{\n}\tsub\partial^*\Omega\cup\Gamma,
\end{equation*}
thus $\n\in \ns(\Om,\Gamma)$.
\par
Let us verify the lower semicontinuity inequality \eqref{eq:superficie}. For every $\varepsilon>0$, let us set
$$
\bm w_k:=\n_k+\varepsilon v_k \e_1\qquad\text{and}\qquad \bm w:=\n+\varepsilon v \e_1,
$$ 
where $v_k,v$ are the functions obtained in Step 1, and $\e_1$ is the first vector of the canonical basis of $\R^3$. For a.e. $\varepsilon>0$ we have
(see Remark \ref{rem:jumpsum})
$$
J_{\bm w}=J_{\n+\varepsilon v \e_1}\teq J_{\n}\cup J_v\teq (\partial^*\Omega\cup\Gamma)\cap D
\qquad\text{and}\qquad
J_{\bm w_k}=J_{\n_k+\varepsilon v_k \e_1}\tsub(\partial^*\Omega_k\cup\Gamma_k)\cap D.
$$
By the lower semicontinuity of surface energies in $SBV$ (see \cite[Theorem 5.22 and Example 5.23]{AFP}) we obtain
\begin{multline*}
\int_{(\partial^*\Omega\cup\Gamma)\cap D}[\varphi(\n^++\varepsilon v^+ \e_1,\normal)+\varphi(\n^-+\varepsilon v^- \e_1,\normal)]\,d\hdue\\
=
\int_{(\partial^*\Omega\cup\Gamma)\cap D}[\varphi(\bm w^+,\normal)+\varphi(\bm w^-,\normal)]\,d\hdue\\
\le
\liminf_{k\to+\infty}
\int_{(\partial^*\Omega_k\cup\Gamma_k)\cap D}[\varphi(\bm w^+_k,\normal_k)+\varphi(\bm w^-_k,\normal_k)]\,d\hdue\\
=
\liminf_{k\to+\infty}
\int_{(\partial^*\Omega_k\cup\Gamma_k)\cap D}[\varphi(\n^+_k+\varepsilon v^+_k \e_1,\normal_k)+\varphi(\n^-_k+\varepsilon v^-_k \e_1,\normal_k)]\,d\hdue.
\end{multline*}
The arbitrariness of $D$ ensures that as $\varepsilon\to0^+$, inequality \eqref{eq:superficie} in the theorem's statement holds thanks to the uniform boundedness of $v,v_k$ in $L^\infty(D)$, the continuity of $\varphi$ and the bound on the Hausdorff dimension of the boundary \eqref{eq:boundProb}. This concludes the proof.
\end{proof}

\begin{remark}
\label{rem:jumpsum}
{\rm
In the previous proof, we relied on the following result: if $D\subseteq \R^d$ is open and $u,v\in SBV(D)$ with $\hd(J_u),\hd(J_v)<+\infty$, then for a.e. $\eps>0$ we have
$J_{u+\eps v}\teq J_u\cup J_v$. We refer to \cite[Remark 3.13]{BucGiac-lambdak} for a proof of this result.
}
\end{remark}

\begin{remark}
\label{rem:traces}
{\rm
Let $(\Om,\Gamma)\in\As(\R^3)$ with $\Om$ open and bounded. Assume that $\n\in \ns(\Om,\Gamma)$ realizes $\tilde\free(\Om,\Gamma)$. If $\Om_k\subset\subset \Om$ is open and invades $\Om$ (i.e., if $K\subset \Om$ is compact, then $K\subset \Om_k$ for $k$ sufficiently large) with $\hdue(\partial^*\Om_k)\to \hdue(\partial^*\Om)$, then
\begin{equation}
\label{eq:conv-traces}
\lim_{k\to+\infty}\int_{\partial^* \Om_k}(\n\cdot \normal_k)^2\,d\hdue=\int_{\partial^* \Om}(\n\cdot \normal)^2\,d\hdue.
\end{equation}
To see this, note that $(\Om,J_{\n}\cap \Om)$ is the limit configuration of the sequence $(\Om_k,\Gamma_k)$ where $\Gamma_k:=J_{\n}\cap \Om_k$. Consequently, the sequence $(\n_k)_{k\in\N}$ with $\n_k:=\n 1_{\Om_k}$ satisfies $\n_k\in \ns(\Om_k,\Gamma_k)$ with $\n_k\to \n$ strongly in $L^1(\R^3;\R^3)$.
By Lemma \ref{lem:glambda}, the lower semicontinuity of surface energies in \eqref{eq:superficie} implies that for every $\alpha\in [-1/2,1]$
\begin{multline*}
\int_{\partial^*\Om}[1+\alpha (\n\cdot\normal)^2]\,d\hdue+\int_{J_{\n}\cap \Om^1}[2+\alpha (\n^+\cdot\normal)^2+\alpha (\n^-\cdot\normal)^2]\,d\hdue\\
\le 
\liminf_{k\to+\infty}\left[ \int_{\partial^*\Om_k}[1+\alpha (\n\cdot\normal_k)^2]\,d\hdue
+\int_{\Gamma_k}[2+\alpha (\n^+\cdot\normal_k)^2+\alpha (\n^-\cdot\normal_k)^2]\,d\hdue
\right].
\end{multline*}
Since $\Gamma_k\subseteq J_{\n}$, we get
$$
\int_{\partial^*\Om}[1+\alpha (\n\cdot \normal)^2]\,d\hdue\le \liminf_{k\to+\infty}\int_{\partial^*\Om_k}[1+\alpha (\n\cdot \normal_k)^2]\,d\hdue.
$$
The convergence of the perimeters, along with the fact that $\alpha$ can take both positive and negative values, entails the convergence stated in \eqref{eq:conv-traces}.
}
\end{remark}

\subsection{Approximation through regular configurations}
\label{sec:density}
In this section, we show how to approximate in energy a configuration represented by a director field $\n\in \Ns(\Om,\Gamma)$, where $(\Om,\Gamma)\in \As(\R^3)$. Our approach parallels the methodology presented
in \cite{braides:relaxation}, albeit with key distinctions: our functions take values in $\mathbb{S}^{2}$, and the surface energy depends on the traces of $\n$ on both sides of the inner boundary.
\par
We start by adapting the approximation result for SBV functions from \cite[Lemma 5.2]{BrCh} to our specific context.

\begin{lemma}
\label{lem:BC}
Let $\Om\subset\R^3$ be an open bounded set with a Lipschitz boundary. Suppose $\uu\in SBV(\Om;\R^3)$ satisfies $|\uu|=1$ a.e. in $\Omega$, $\nabla \uu\in L^2(\Om;\Mtre)$ and $\hdue\left(J_{\uu}\right)<\infty$. Then there exist a sequence $\left(R_n\right)_{n\in\mathbb{\N}}$ of rectifiable sets and a sequence $(\uu_n)_{n\in\mathbb{N}}$ of functions in $SBV(\Om;\R^3)$ such that:
\begin{itemize}
\item[(a)] every $R_n$ is relatively closed in $\Omega$, and $\hdue(R_n\setminus J_{\uu_n})\to 0$. Every $\uu_n\in C^1\left(\Omega\setminus R_n;\mathbb{R}^3\right)$ with $|\uu_n|=1$ a.e. on $\Omega\setminus R_n$. Moreover, the $\hdue$-measure of every compact subset $K\subseteq R_n$ coincides with its $2$-dimensional Minkowski content. 
\item[(b)] The following convergence properties hold: $\uu_n\to\uu$ strongly in $L^2\left(\Omega;\mathbb{R}^3\right)$, $\nabla \uu_n \to \nabla \uu$ strongly in $L^2(\Om;\Mtre)$, $\hdue(J_{\uu_n}\Delta J_{\uu})\to 0$, 
$$
\int_{J_{\uu}\cup J_{\uu_n}}\left[|\uu_n^+-\uu^+|+|\uu_n^--\uu^-|\right]\dd\hdue\to 0,
$$
and
\begin{equation}
\label{eq:traces_bradies}
\int_{\partial\Omega}|\uu_n-\uu|\dd\hdue\to 0.
\end{equation}
\end{itemize}
\end{lemma}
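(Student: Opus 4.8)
The plan is to drop the constraint $|\uu|=1$, invoke the unconstrained $SBV$ approximation of Braides--Chambolle, and then restore the constraint through a single radial projection onto $\mathbb{S}^{2}$. Applying \cite[Lemma 5.2]{BrCh} to the $\R^3$-valued map $\uu$ produces relatively closed rectifiable sets $R_n\subset\Om$ with the Minkowski-content property, and maps $\tilde\uu_n\in SBV(\Om;\R^3)\cap C^1(\Om\setminus R_n;\R^3)$ with $J_{\tilde\uu_n}\tsub R_n$ satisfying every convergence in (b) \emph{except} the pointwise constraint: $\tilde\uu_n\to\uu$ in $L^2$, $\nabla\tilde\uu_n\to\nabla\uu$ in $L^2$, $\hdue(J_{\tilde\uu_n}\Delta J_{\uu})\to0$, and the two trace convergences. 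As the construction is built from mollifications and convex combinations of a map valued in $\overline{B_1}$ (recall $|\uu|=1$), we may assume $|\tilde\uu_n|\le1$; hence $|\tilde\uu_n|\to1$ in $L^2$, and up to a subsequence $\tilde\uu_n\to\uu$ a.e.

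First I quantify where the constraint is badly violated. Set $B_n:=\{x\in\Om:\ |\tilde\uu_n(x)|<\tfrac12\}$. Chebyshev's inequality together with $\tilde\uu_n\to\uu$ in $L^2$ and $|\uu|=1$ gives $|B_n|\to0$; since $\nabla\tilde\uu_n\to\nabla\uu$ strongly in $L^2$, the integrands $|\nabla\tilde\uu_n|^2$ are equi-integrable and therefore $\int_{B_n}|\nabla\tilde\uu_n|^2\,dx\to0$. On the complementary \emph{good set} $\{|\tilde\uu_n|\ge\tfrac12\}$ the map $\tilde\uu_n$ is bounded away from the origin, so any radial projection behaves well there.

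The core step is the projection. For $a\in\R^3$ with $|a|\le\tfrac14$ let $P_a:\overline{B_1}\setminus\{a\}\to\mathbb{S}^{2}$ send $\p$ to the point where the ray from $a$ through $\p$ meets $\mathbb{S}^{2}$. Crucially $P_a$ restricts to the identity on $\mathbb{S}^{2}$, so its differential acts as the identity on $T_{\uu}\mathbb{S}^{2}$, and it is $C^1$ and Lipschitz on $\{|\p|\ge\tfrac12\}$ with constants independent of such $a$. I set $\uu_n:=P_{a_n}(\tilde\uu_n)$ for a suitable $a_n$, so that $|\uu_n|=1$ a.e. On the good set $\nabla\uu_n=DP_{a_n}(\tilde\uu_n)\nabla\tilde\uu_n$; using the uniform Lipschitz bound, $\tilde\uu_n\to\uu$ a.e., the tangency $\uu\cdot\partial_i\uu=0$ (whence $DP_{a_n}(\uu)\nabla\uu=\nabla\uu$), and the strong $L^2$ convergence of $\nabla\tilde\uu_n$, one obtains $\nabla\uu_n\to\nabla\uu$ strongly in $L^2$ of the good set. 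The main obstacle is the bad set $B_n$: projecting from a point where $\tilde\uu_n$ vanishes with nonzero local degree would create a quantized defect of energy bounded below, destroying gradient convergence. This is circumvented by the averaged-projection estimate of Hardt--Kinderlehrer--Lin \cite{HKL}: the coarea formula yields $\frac{1}{|B_{1/4}|}\int_{\{|a|\le 1/4\}}\int_{B_n}|\nabla P_a\tilde\uu_n|^2\,dx\,da\le C\int_{B_n}|\nabla\tilde\uu_n|^2\,dx$, so a generic choice of $a_n$ with $|a_n|\le\tfrac14$ forces $\int_{B_n}|\nabla\uu_n|^2\,dx\le C\int_{B_n}|\nabla\tilde\uu_n|^2\,dx\to0$. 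For $n$ large this lies below the defect threshold, so no singularity survives and $\uu_n$ is a genuine $\mathbb{S}^{2}$-valued map on $B_n$ with vanishing energy. Combining the two regions gives $\nabla\uu_n\to\nabla\uu$ strongly in $L^2(\Om;\Mtre)$, while $\uu_n\to\uu$ in $L^2$ follows from $\tilde\uu_n\to\uu$, $|B_n|\to0$ and $|\uu_n|\le1$.

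Finally I transfer the interfacial data. Using one projection everywhere avoids spurious jumps along $\partial B_n$, and since $P_{a_n}$ is continuous off $B_{1/8}(a_n)$ while the traces of $\uu$ across $J_{\uu}$ (and on $\partial\Om$) are unit vectors, hence bounded away from $a_n$, the composition creates no new discontinuities: $J_{\uu_n}\tsub R_n$, and $\hdue(R_n\setminus J_{\uu_n})\to0$, $\hdue(J_{\uu_n}\Delta J_{\uu})\to0$ are inherited from \cite{BrCh}. The Lipschitz bound $|P_{a_n}(\p_1)-P_{a_n}(\p_2)|\le L|\p_1-\p_2|$ on $\{|\p|\ge\tfrac12\}$ then turns the trace convergences of $\tilde\uu_n$ on $J_{\uu}\cup J_{\uu_n}$ and on $\partial\Om$ into the required ones for $\uu_n$, the contribution of the vanishing set $B_n$ being absorbed by $|B_n|\to0$ and the uniform $L^\infty$ bound. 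I expect the bad-set projection --- guaranteeing, via the generic coarea choice, that the small residual energy precludes quantized point defects --- to be the one genuinely delicate point; every remaining claim is a Lipschitz-stability transfer from the Braides--Chambolle approximation.
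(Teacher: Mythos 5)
Your plan --- unconstrained Braides--Chambolle approximation followed by a single radial projection --- breaks down at the step you yourself flag as delicate, and it breaks for a concrete reason: the ``defect threshold'' you invoke does not exist. In three dimensions the energy of a point defect is not quantized from below; for instance $\int_{B_r}\left|\nabla\left(x/|x|\right)\right|^2dx=8\pi r$, so a singularity living at small spatial scale carries arbitrarily small energy. Hence the correct conclusion $\int_{B_n}|\nabla\uu_n|^2dx\to 0$ (which your Hardt--Kinderlehrer--Lin averaging over centers $a_n$ does yield for a generic $a_n$) in no way prevents the projected map $\uu_n=P_{a_n}(\tilde\uu_n)$ from having point singularities at the preimages $\tilde\uu_n^{-1}(a_n)$. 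For a Sard-generic $a_n$ this preimage is a discrete subset of $\Om\setminus R_n$, possibly countably infinite and accumulating at $R_n\cup\partial\Om$, and at each of its points $\uu_n$ fails to be continuous, let alone $C^1$. So part (a) of Lemma \ref{lem:BC} fails for your $(\uu_n,R_n)$ as constructed. This is not an artifact one can argue away: by the Schoen--Uhlenbeck non-density phenomenon, for the admissible datum $\uu(x)=x/|x|$ (which has $J_{\uu}=\emptyset$) no sequence of maps that are $C^1$ on all of $\Om$ with values in $\mathbb S^2$ can converge to $\uu$ strongly in $W^{1,2}$, so any correct proof must produce approximants with singular points and absorb those points into $R_n$.

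The natural repair --- enlarging $R_n$ to $R_n\cup\left(\tilde\uu_n^{-1}(a_n)\cap(\Om\setminus R_n)\right)$ --- preserves relative closedness, rectifiability and the $\hdue$-measure statements, but it jeopardizes exactly the remaining claim of (a): that every compact subset of $R_n$ has $2$-dimensional Minkowski content equal to its $\hdue$-measure. A countable set of points, discrete away from $R_n$ but accumulating at it, can have strictly positive upper Minkowski content, and nothing in your construction controls the distribution of $\tilde\uu_n^{-1}(a_n)$: near such a point $\tilde\uu_n$ is a local diffeomorphism, so the projected energy there is as small as the scale of the preimage, and no density lower bound is available. This matters because the Minkowski content property is precisely what Step 3 of the proof of Theorem \ref{thm:approx} uses (through the coarea argument for the distance function $d_R$) to cut tubular neighborhoods of $\Gamma$ of controlled area. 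This is why the paper takes a different route: it runs the Braides--Chambolle penalized minimization of \cite{BrCh} \emph{within} the class of $\mathbb S^2$-valued SBV maps, so that $\uu_n$ is a local quasi-minimizer of the constrained Mumford--Shah functional, and then the Carriero--Leaci regularity theory \cite{CaLe} supplies both essential closedness of $J_{\uu_n}$ and uniform density lower bounds, from which the Minkowski content property follows via \cite[Theorem 2.104]{AFP}; quasi-minimality provides exactly the quantitative control that your post-composition construction lacks. Your remaining steps (the chain rule on the good set, the identity $DP_{a}(\uu)\nabla\uu=\nabla\uu$ from tangency of $\nabla\uu$ to the sphere, and the transfer of the trace convergences) are sound, but as written the proposal does not prove part (a).
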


\begin{proof}
For $n\geq1$ let us consider a compact set $K_n\subseteq J_{\uu}$ such that
\begin{equation}
\label{eq:K_n}
\hdue\left(J_{\uu}\setminus K_n\right)\leq\frac{1}{n}.
\end{equation}
Let us define the space
$$
\hs:=\{\uu\in SBV(\Om;\R^3)\,:\, \nabla \uu \in L^2(\Om;\Mtre), |\uu|=1 \text{ a.e. in }\Om\}.
$$
Following \cite[Lemma 5.2]{BrCh}, we consider the minimum problem
\begin{multline}
\label{eq:BrCh}
\min_{\vv\in \hs} \left\{ \int_\Omega|\nabla\vv|^2\,dx+\hdue\left(J_{\vv}\setminus K_n\right)+n\int_\Omega|\vv-\uu|^2\,dx\right.\\\left.+\int_{K_n}\left[|\vv^+-\uu^+|+|\vv^--\uu^-|\right]\wedge 1\,d\hdue+\int_{\partial\Omega}|\vv-\uu|\wedge 1\,d\hdue\right\}.
\end{multline}
The existence of a solution $\uu_n$ to problem \eqref{eq:BrCh} follows immediately by employing the direct method and Ambrosio's theorem (the lower semicontinuity of the functional, for which truncations at level $1$ in the surface integrals are essential, is ensured by considering its one-dimensional sections as in \cite{BrCh}).
\par
Comparing $\uu_n$ and $\uu$, we get the estimate
\begin{multline}
\label{eq:un-min}
\int_\Omega|\nabla\uu_n|^2\,dx+\hdue(J_{\uu_n})+n\int_\Omega|\uu_n-\uu|^2\,dx\\
+\int_{K_n}\left[|\uu_n^+-\uu^+|+|\uu_n^--\uu^-|\right]\wedge 1\,d\hdue+\int_{\partial\Omega}|\uu_n-\uu|\wedge 1
\,d\hdue\\
\le \int_\Omega|\nabla\uu_n|^2\,dx+\hdue(J_{\uu_n}\setminus K_n)+\hdue(K_n)+n\int_\Omega|\uu_n-\uu|^2\,dx\\
+\int_{K_n}\left[|\uu_n^+-\uu^+|+|\uu_n^--\uu^-|\right]\wedge 1\,d\hdue+\int_{\partial\Omega}|\uu_n-\uu|\wedge 1
\,d\hdue\\
\le \int_\Omega|\nabla\uu|^2\,dx+\hdue\left(J_{\uu}\setminus K_n\right)+\hdue(K_n)= \int_\Omega|\nabla\uu|^2\,dx+\hdue(J_{\uu}).
\end{multline}
Consequently, we deduce 
\begin{equation}
\label{eq:s-un}
\uu_n\to \uu \qquad\text{strongly in }L^2(\Om;\R^3),
\end{equation}
and, in view of Ambrosio's theorem,
\begin{equation}
\label{eq:s-nablaun}
\nabla \uu_n\to \nabla \uu \qquad\text{strongly in }L^2(\Om;\Mtre),
\end{equation}
\begin{equation}
\label{eq:Jun}
\hdue(J_{\uu_n})\to \hdue(J_u),
\end{equation}
and
\begin{equation}
\label{eq:tr}
\int_{K_n}\left[|\uu_n^+-\uu^+|+|\uu_n^--\uu^-|\right]\wedge 1\,d\hdue+\int_{\partial\Omega}|\uu_n-\uu|\wedge 1
\,d\hdue\to 0.
\end{equation}
In particular, from \eqref{eq:un-min} and \eqref{eq:Jun}, we get (since $\hdue(K_n)\to \hdue(J_{\uu})$)
\begin{equation}
\label{eq:Jun2}
\hdue(J_{\uu_n}\setminus K_n)\to 0\qquad \text{and}\qquad \hdue(J_{\uu_n}\cap K_n)\to \hdue(J_{\uu}),
\end{equation}
which implies
\begin{multline}
\label{eq:JunDJu}
\hdue(J_{\uu_n}\Delta J_u)\to 0=\hdue(J_{\uu_n}\setminus J_{\uu})+\hdue(J_{\uu}\setminus J_{\uu_n})\\
\le \hdue(J_{\uu_n}\setminus K_n)+\hdue(J_{\uu}\setminus (J_{\uu_n}\cap K_n))\\
=\hdue(J_{\uu_n}\setminus K_n)+\hdue(J_{\uu})-\hdue(J_{\uu_n}\cap K_n)\to 0.
\end{multline}
Since $\uu_n$ and $\uu$ are bounded, it readily follows from \eqref{eq:tr} that
\begin{equation}
\label{eq:tr2}
\int_{\partial\Omega}|\uu_n-\uu|\,d\hdue\to 0.
\end{equation}
Considering \eqref{eq:tr} and \eqref{eq:Jun2}, we can further write
\begin{multline*}
\int_{J_{\uu}\cup J_{\uu_n}}\left[|\uu_n^+-\uu^+|+|\uu_n^--\uu^-|\right]\wedge 1d\hdue\\
\le
\int_{K_n}\left[|\uu_n^+-\uu^+|+|\uu_n^--\uu^-|\right]\wedge 1 d\hdue+\hdue((J_{\uu}\cup J_{\uu_n})\setminus K_n)\to 0.
\end{multline*}
In particular, up to a subsequence, we have
$$
|\uu_n^+-\uu^+|+|\uu_n^--\uu^-|\to 0 \qquad\text{$\hdue$-a.e. on } J_{\uu},
$$
and so, since $\uu_n$ and $\uu$ are bounded and $\hdue(J_{\uu_n}\setminus J_{\uu})\to 0$, we deduce that
$$
\int_{J_{\uu}\cup J_{\uu_n}}\left[|\uu_n^+-\uu^+|+|\uu_n^--\uu^-|\right]\,d\hdue\to 0.
$$
Point (b) follows by considering the relation above along with \eqref{eq:s-un}, \eqref{eq:s-nablaun}, \eqref{eq:JunDJu} and \eqref{eq:tr2}.
\par
Let us address point (a). Note that $\uu_n$ is a {\it local quasi minimizer} for the Mumford-Shah functional on $\Om\setminus K_n$
for $\mathbb{S}^{2}$-valued SBV functions: this is because the contributions of the integrals involving $K_n$ and $\partial\Omega$ in \eqref{eq:BrCh} are unaffected
by perturbations of $\uu_n$ compactly supported in $\Om\setminus K_n$. By extending the De Giorgi-Carriero-Leaci regularity result for $\mathbb{S}^{2}$-valued maps obtained in \cite[Lemma 4.5]{CaLe}, we deduce that
$J_{\uu_n}$ is essentially closed in $\Omega\setminus K_n$, meaning
\begin{equation}
\label{eq:ess_closed}
\hdue\left(\left(\overline{J_{\uu_n}}\setminus J_{\uu_n}\right)\cap\left(\Omega\setminus K_n\right)\right)=0,
\end{equation}
and that
$$
\uu_n\in C^1\left(\Omega\setminus(\overline{J_{\uu_n}}\cup K_n);\mathbb{R}^3\right).
$$ 
Additionally, according to \cite[Lemma 4.9]{CaLe}, the set $\overline{J_{\uu_n}}\cap (\Om\setminus K_n)$ satisfies the following lower density estimate: there exists $c>0$ such that for every $x\in \overline{J_{\uu_n}}\cap (\Om\setminus K_n)$ and every ball $B_r(x)\subset \Om\setminus K_n$ we have
\begin{equation}
\label{eq:Alf}
\hdue(J_{\uu_n}\cap B_r(x))\ge cr^{2}.
\end{equation}
Let us the define the set 
$$
R_n:=(\overline{J_{\bm u_n}}\cup K_n)\cap \Om,
$$
which is rectifiable and relatively closed in $\Om$. Clearly
\begin{equation*}
\uu_n\in C^1(\Om\setminus R_n;\R^3)\qquad\text{with}\qquad |\uu_n|=1\text{ on }\Om\setminus R_n.
\end{equation*}
From \eqref{eq:JunDJu} and \eqref{eq:ess_closed}, it follows that
$$
\hdue(R_n\setminus J_{\uu_n})=\hdue(K_n\setminus J_{\uu_n})\le \hdue(J_{\uu}\setminus J_{\uu_n})\to  0.
$$
To complete the proof of point (a), we need to verify the property concerning the Minkowski content of any compact subset $K$ of $R_n$. 
\par
It is not restrictive to assume that $K_n$ satisfies the following density lower bound: there exists $\bar r>0$ such that for every $x\in K_n$ and every $0<r<\bar r$
\begin{equation}
\label{eq:Knbelow}
\frac{\hdue(J_{\uu}\cap B_r(x))}{\pi r^{2}}\ge \frac{1}{2}.
\end{equation}
This assumption holds because $J_{\uu}$ is rectifiable and has finite $\hdue$-measure, properties which ensure that for $\hdue$-a.e. $x\in J_{\uu}$
$$
\lim_{r\to 0^+}\frac{\hdue(J_{\uu}\cap B_r(x))}{\pi  r^{2}}=1.
$$
By Severini-Egorov theorem, for every $n\ge 1$ there exists a compact set $K_n\subset J_{\uu}$ such that $\hdue(J_{\uu}\setminus K_n)<\frac{1}{n}$ and 
$$
\frac{\hdue(J_{\uu}\cap B_r(x))}{\pi r^{2}}\to 1\qquad\text{as $r\to 0^+$ uniformly on $x\in K_n$}.
$$
This implies \eqref{eq:Knbelow}.
\par
Let us consider the finite Radon measure $\nu:=\hdue\lfloor{(J_{\uu_n} \cup J_{\uu})}$. By \eqref{eq:Knbelow} and \eqref{eq:Alf}, there exists a constant $\tilde c>0$ such that for every compact set $K\subseteq R_n$ the following holds: for all $x\in K$ and $0<r<1$,
$$
\nu(B_r(x))\ge \tilde c r^{2}.
$$
The Minkowski content property stated in point (a) follows from the general result in \cite[Theorem 2.104]{AFP}.
\end{proof}

The following approximation result holds true.

\begin{theorem}
\label{thm:approx}
Let $D\subset \R^3$ be a bounded open set, and let $(\Om,\Gamma)\in \As(\R^3)$ with $\Om\subset\subset D$ and $\n\in \Ns(\Om,\Gamma)$. Then there exists a sequence $(A_k)_{k\in\N}$ of open sets with a smooth boundary such that $A_k\subset D$, 
\begin{equation}
\label{eq:Akprop}
|\Om\Delta A_k|\to 0,\qquad \limsup_{k\to+\infty}\hdue(\partial A_k)\le \hdue(\partial^*\Om)+2\hdue(\Gamma),
\end{equation}
and there exists a sequence $(\n_k)_{k\in\N}$ with $\n_k\in W^{1,2}(A_k;\mathbb S^2)$ such that
\begin{equation}
\label{eq:nk1}
\n_k 1_{A_k} \to \n\qquad\text{strongly in }L^2(\R^3;\R^3),
\end{equation}
\begin{equation}
\label{eq:nk2}
\nabla \n_k 1_{A_k} \to \nabla \n\qquad\text{strongly in }L^2(\R^3;M^{3\times 3}),
\end{equation}
and
\begin{equation}
\label{eq:nk3}
\limsup_{k\to+\infty} \int_{\partial A_k}(\n_k\cdot \normal_k)^2\,d\hdue\le 
\int_{\partial^*\Om}(\n\cdot \normal)^2\,d\hdue+\int_{\Gamma}[(\n^+\cdot \normal)^2+(\n^-\cdot\normal)^2]\,d\hdue.
\end{equation}
\end{theorem}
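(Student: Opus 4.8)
The plan is to construct $(A_k,\n_k)$ through a threefold approximation—regularizing successively the outer boundary $\partial^*\Om$, the director inside $\Om$, and then opening the interior jump set into a thin slit whose two faces carry the two traces $\n^\pm$—and to couple the three parameters by a diagonal argument. First a reduction: since $\n\in\Ns(\Om,\Gamma)$ forces $J_\n\cap\Om^1\tsub\Gamma$ and since $\n^+=\n^-=\n$ on $\Gamma\setminus J_\n$, replacing $\Gamma$ by the smaller rectifiable set $J_\n\cap\Om^1$ only decreases the right-hand sides of \eqref{eq:Akprop} and \eqref{eq:nk3} (it discards the nonnegative terms $2\hdue(\Gamma\setminus J_\n)$ and $2\int_{\Gamma\setminus J_\n}(\n\cdot\normal)^2\,d\hdue$). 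Hence I may and do assume $\Gamma=J_\n\cap\Om^1$, so that $\Gamma$ is exactly the interior jump set of $\n$.

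For the construction, I would first apply the interior smooth approximation of Proposition \ref{pro:comitorres} to obtain open sets $\Om_j$ with smooth boundary, $\Om_j\subset\subset D$, $|\Om_j\Delta\Om^1|\to0$ and $\hdue(\partial\Om_j)\to\hdue(\partial^*\Om)$, with $\partial\Om_j$ concentrating in $\Om^1$. On each (smooth, hence Lipschitz) domain $\Om_j$ the restriction of $\n$ satisfies the hypotheses of Lemma \ref{lem:BC}, which yields $\uu_{j,n}\in SBV(\Om_j;\R^3)$ and relatively closed rectifiable sets $R_{j,n}$ with $\uu_{j,n}\in C^1(\Om_j\setminus R_{j,n};\mathbb S^2)$, the convergences of part (b), and the crucial property that $\hdue$ coincides with the $2$-dimensional Minkowski content on compact subsets of $R_{j,n}$. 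I then excise a thin slit: letting $d_{j,n}$ be a smooth approximation of $\mathrm{dist}(\cdot,R_{j,n})$ and choosing by Sard's theorem a regular value $\delta$, set $A:=\Om_j\cap\{d_{j,n}>\delta\}$ (open with smooth boundary) and $\n_k:=\uu_{j,n}|_A\in W^{1,2}(A;\mathbb S^2)$, which makes sense since $A\subset\Om_j\setminus R_{j,n}$ and $\nabla\uu_{j,n}\in L^2$. Because cutting the slit removes the jump of $\uu_{j,n}$ and leaves the absolutely continuous gradient unchanged, the bulk convergences \eqref{eq:nk1}–\eqref{eq:nk2} follow from Lemma \ref{lem:BC}(b) together with $|\Om_j\setminus A|\to0$, $|\Om\setminus\Om_j|\to0$ and $\nabla\n\in L^2(\Om)$.

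The decisive point is the surface energy. The boundary of $A$ splits into an outer part on $\partial\Om_j$ and the slit faces $\{d_{j,n}=\delta\}$ hugging $R_{j,n}$. By the Minkowski content property the area of the two faces tends to $2\hdue(R_{j,n})$, producing the factor $2$; for the weighted integral I would prove, through a co-area argument with $d_{j,n}$ and the existence of genuine one-sided limits $\uu_{j,n}^\pm$ of the $C^1$ map $\uu_{j,n}$ along $\pm$ the approximate normal of $R_{j,n}$, that
$$
\int_{\{d_{j,n}=\delta\}}(\uu_{j,n}\cdot\normal)^2\,d\hdue\xrightarrow[\ \delta\to0\ ]{}\int_{R_{j,n}}\big[(\uu_{j,n}^+\cdot\normal)^2+(\uu_{j,n}^-\cdot\normal)^2\big]\,d\hdue,
$$
while the outer contribution tends to $\int_{\partial\Om_j}(\uu_{j,n}\cdot\normal)^2\,d\hdue$. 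I expect this weighted Minkowski-content limit over the merely rectifiable set $R_{j,n}$—showing that each face carries area $\to\hdue(R_{j,n})$, that its normal aligns with $\pm$ the approximate normal of $R_{j,n}$, and that the trace of $\uu_{j,n}$ on it converges to $\uu_{j,n}^\pm$—to be the main obstacle, together with the milder point of ensuring $\partial A$ is globally smooth where the slit meets $\partial\Om_j$ (arranged by taking $\delta$ a common regular value of a single smooth defining function).

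It then remains to send the three parameters to their limits and diagonalize. As $\delta\to0$ one obtains $\hdue(\partial A)\to\hdue(\partial\Om_j)+2\hdue(R_{j,n})$ and the slit surface energy above. As $n\to\infty$, Lemma \ref{lem:BC}(b) replaces $R_{j,n}$ by $J_\n\cap\Om_j$ and $\uu_{j,n}^\pm$ by $\n^\pm$, and \eqref{eq:traces_bradies} replaces the outer trace of $\uu_{j,n}$ by that of $\n$, giving $\int_{\partial\Om_j}(\n\cdot\normal)^2\,d\hdue$. As $j\to\infty$, Proposition \ref{pro:comitorres} gives $\hdue(\partial\Om_j)\to\hdue(\partial^*\Om)$ and $\hdue(J_\n\cap\Om_j)\to\hdue(\Gamma)$, while the argument of Remark \ref{rem:traces} (lower semicontinuity of Theorem \ref{thm:lsc} with $\varphi(\bm v,\bm p)=|\bm p|+\alpha(\bm v\cdot\bm p)^2/|\bm p|$ for both signs of $\alpha\in[-\tfrac12,1]$, combined with perimeter convergence) yields $\int_{\partial\Om_j}(\n\cdot\normal)^2\,d\hdue\to\int_{\partial^*\Om}(\n\cdot\normal)^2\,d\hdue$. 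A diagonal extraction then produces a single sequence $(A_k,\n_k)$ satisfying \eqref{eq:Akprop}, \eqref{eq:nk1}, \eqref{eq:nk2} and \eqref{eq:nk3}.
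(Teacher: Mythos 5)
Your architecture matches the paper's quite closely (reduction to $\Gamma=J_\n\cap\Om^1$, interior approximation via Proposition \ref{pro:comitorres}, the Lemma \ref{lem:BC} regularization with the Minkowski content property, excision of a tubular neighbourhood of the regularized jump set, final diagonalization), but the step you yourself flag as ``the main obstacle'' is a genuine gap, and it is exactly the point where the paper does something different. You propose to prove that, as $\delta\to 0$,
$$
\int_{\{d_{j,n}=\delta\}}(\uu_{j,n}\cdot\normal)^2\,d\hdue\ \longrightarrow\ \int_{R_{j,n}}\bigl[(\uu_{j,n}^+\cdot\normal)^2+(\uu_{j,n}^-\cdot\normal)^2\bigr]\,d\hdue,
$$
i.e.\ a \emph{weighted} Minkowski-content convergence in which the slit faces carry the correct area, their normals align with the approximate normal of $R_{j,n}$, and the boundary values of $\uu_{j,n}$ converge to the one-sided traces. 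Nothing in the paper provides this: Lemma \ref{lem:BC} only gives the \emph{unweighted} Minkowski content property, $R_{j,n}$ is merely a relatively closed rectifiable set, and a coarea argument with the (smoothed) distance function only controls averages of the unweighted areas $\hdue(\{d=s\})$ along selected levels $s$. Establishing normal alignment and trace convergence on the level sets would require a blow-up analysis that is a substantial piece of work in itself; moreover only a $\limsup\le$ is needed in \eqref{eq:nk3}, so aiming at full convergence makes the task strictly harder than necessary.

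The paper circumvents this entirely. After cutting $\Om_k:=\Om\setminus\{d_R<s_k\}$, with $s_k$ chosen by coarea so that $\limsup_k\hdue(\{d_R=s_k\})\le 2\hdue(R)$ (which yields only the unweighted bound in \eqref{eq:Akprop}), it regards $(\Om_k,\emptyset)$ as a sequence of configurations converging to some $(\Om,K)$ in the sense of Theorem \ref{thm:lsc}, observes $J_{\n}\cap\Om\tsub K\tsub\Gamma$ hence $\hdue(\Gamma\setminus K)<\eps$, and then applies the lower semicontinuity inequality \eqref{eq:superficie} with the integrand of Lemma \ref{lem:glambda} for a \emph{negative} parameter $\alpha=-\sqrt\eps$: lower semicontinuity of $\int[1+\alpha(\n\cdot\normal)^2]\,d\hdue$, combined with the convergence of the total areas, converts into the required \emph{upper} bound
$$
\limsup_{k\to+\infty}\int_{\partial^*\Om_k}(\n_k\cdot\normal_k)^2\,d\hdue\le\int_{\partial\Om}(\n\cdot\normal)^2\,d\hdue+\int_{\Gamma}[(\n^+\cdot\normal)^2+(\n^-\cdot\normal)^2]\,d\hdue+3\sqrt\eps,
$$
and the $3\sqrt\eps$ error is absorbed in the diagonalization. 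Note that you already invoke precisely this two-signs-of-$\alpha$ trick for the outer boundary (via Remark \ref{rem:traces}); the missing idea is that the same mechanism, rather than a direct trace computation, handles the slit. A secondary flaw: the Comi--Torres sets $\Om_j$ need not be contained in $\Om$, so the restriction of $\n$ to $\Om_j$ equals $0$ on $\Om_j\setminus\Om$ and violates $|\n|=1$ a.e., and Lemma \ref{lem:BC} does not apply as you state; the paper repairs this by setting the director equal to a constant unit vector on $\Om_j\setminus\Om$ and adding $\partial^*\Om\cap\Om_j$ (whose measure vanishes by the choice of $\mu$ in Proposition \ref{pro:comitorres}) to the inner crack.
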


\begin{proof}
Let $dist(\Om,\partial D)>c>0$ (for example, identify $\Om$ with $\Om^1$). Let us divide the approximation process into several steps.

\vskip10pt\noindent{\bf Step 1.}  
We claim that $\Om$ can be assumed to be open with a smooth boundary. Without loss of generality, we assume that $\Gamma=J_{\n}\cap \Om^1$, where $\n\in \ns(\Om,\Gamma)$ is a director that realizes the minimum of the total energy in \eqref{eq:Etot} for the given $(\Omega,\Gamma)$.
Let us approximate $\Om$ ``from inside'' according to Proposition \ref{pro:comitorres}
with $\mu:=\hdue\lfloor (\partial^*\Om \cup \Gamma)$. We find a sequence of  smooth open sets $(\Om_k)_{k\in\N}\subset\R^3$ such that $dist(\Om_k,\partial D)>c$ and the following convergence properties hold:
$$
|\Om_k \Delta\Omega|\to 0,\quad \hdue(\partial \Om_k)\to \hdue(\partial^*\Omega),
$$
and
$$
\hdue(\partial^*\Om \cap \Om_k)\to 0, \qquad \hdue(\Gamma\setminus \Om_k)\to 0, \qquad \hdue(\partial \Om_k\setminus \Omega^1)\to 0.
$$
Let us define 
$$
\Gamma_k:= (\Gamma\cap \Om_k) \cup (\partial^*\Om \cap \Om_k).
$$
Consequently, $(\Om_k,\Gamma_k)\in \As(\R^3)$, and by construction,
\begin{equation}
\label{eq:geom1}
|\Om_k\Delta \Om|\to 0,\qquad\text{and} \qquad \lim_{k\to+\infty} \left[\hdue(\partial \Om_k)+2\hdue(\Gamma_k)\right]=\hdue(\partial^*\Om)+2\hdue(\Gamma).
\end{equation}
\par
Notice that $(\Om,\Gamma)$ is the limit configuration of $(\Om_k,\Gamma_k)_{k\in\N}$ according to Theorem \ref{thm:lsc}. Indeed, let $(\Om,K)\in\As(\R^3)$ be a limit configuration (up to subsequences) of $(\Om_k,\Gamma_k)_{k\in\N}$. Define the sequence $(\n_k)_{k\in\N}$ such that
$$
\n_k:=
\begin{cases}
\n &\text{ in }\Om_k\cap \Om\\
\bm e_1&\text{in }\Om_k \setminus \Om\\
0 &\text{in }\R^3\setminus \Om_k
\end{cases}
$$
where $\n\in \ns(\Om,\Gamma)$ is the director considered above that realizes the minimal energy of $(\Om,\Gamma)$, and $\bm e_1$ is the first element of the canonical base of $\R^3$. This construction ensures that $\n_k\in \ns(\Om_k,\Gamma_k)$ and that
\begin{equation}
\label{eq:Sn1}
\n_k\to \n\qquad\text{strongly in }L^2(\R^3;\R^3),
\end{equation}
and
\begin{equation}
\label{eq:Sn2}
\nabla\n_k\to \nabla\n\qquad\text{strongly in }L^2(\R^3;\Mtre).
\end{equation}
Thanks to Theorem \ref{thm:lsc} we deduce that $\n\in \ns(\Om,K)$, implying $\Gamma=J_{\n}\cap \Om^1 \tsub K$. On the other hand, by choosing $\varphi(\bm v,\bm p):=|\bm v||\bm p|$ in \eqref{eq:superficie}, we get
$$
\hdue(\partial^*\Om)+2\hdue(K)\le \liminf_{k\to+\infty}\left[ \hdue(\partial\Om_k)+2\hdue(\Gamma_k)\right]=\hdue(\partial^*\Om)+2\hdue(\Gamma),
$$
which yields $\hdue(K)\le \hdue(\Gamma)$. As a consequence, we deduce that $K\teq \Gamma$, and the result follows.
\par
Thanks to Lemma \ref{lem:glambda} and the lower semicontinuity result from Theorem \ref{thm:lsc}, we obtain for every $\alpha\in [-1/2,1]$ that
\begin{multline*}
\int_{\partial^*\Om}[1+\alpha (\n\cdot\normal)^2]\,d\hdue+
\int_{\Gamma}[2+\alpha (\n^+\cdot\normal)^2+\alpha (\n^-\cdot\normal)^2] \,d\hdue\\
\le \liminf_{k\to+\infty}
\left[
\int_{\partial\Om_k}[1+\alpha (\n_k\cdot\normal_k)^2]\,d\hdue+
\int_{\Gamma_k}
[2+\alpha (\n_k^+\cdot\normal_k)^2+\alpha (\n_k^-\cdot\normal_k)^2] \,d\hdue
\right].
\end{multline*}
Given that $\alpha$ can be both positive and negative, and taking into account \eqref{eq:geom1}, this inequality leads to
\begin{multline}
\label{eq:surf1}
\int_{\partial^*\Om}(\n\cdot\normal)^2\,d\hdue+
\int_{\Gamma}[(\n^+\cdot\normal)^2+(\n^-\cdot\normal)^2]\,d\hdue\\
=\lim_{k\to+\infty}
\left[
\int_{\partial\Om_k}(\n_k\cdot\normal)^2\,d\hdue+
\int_{\Gamma_k}[(\n^+_k\cdot\normal_k)^2+(\n^-_k\cdot\normal_k)^2]\,d\hdue
\right].
\end{multline}
The claim follows in view of \eqref{eq:geom1}, \eqref{eq:Sn1}, \eqref{eq:Sn2} and \eqref{eq:surf1}.

\vskip10pt\noindent{\bf Step 2.} 
Let $\e>0$. We claim that we can assume $\Om\subset\subset D$ is smooth with $\Gamma=R\cap \Om$, where $R\subset \Omb$ is a closed rectifiable set such that $\hdue(R\setminus J_{\n})<\eps$ and $\hdue(R\cap \partial\Om)=0$. Moreover, we can assume that the two dimensional Minkowski content of $R$ coincides with $\hdue(R)$.
\par
Indeed, by Step 1 we know that $\Om$ can be assumed to be smooth. Let us consider the sequences $(R_k)_{k\in\N}$ of rectifiable sets in $\Omega$ and $(\n_k)_{k\in\N}$ of functions $\in SBV(\Om;\R^3)$ that approximate $J_{\n}$ and $\n$, respectively, according to Lemma \ref{lem:BC}. Since $\hdue(R_k\Delta J_{\n})\to 0$, we have
\begin{equation*}
\lim_{k\to+\infty}\hdue(R_k)=\hdue(J_{\n})\le \hdue(\Gamma).
\end{equation*}
Moreover, $\n_k\in \Ns(\Om,R_k)$ with
$$
\n_k\to \n\qquad\text{strongly in }L^2(\R^3;\R^3)
\qquad\text{and}\qquad \nabla \n_k\to \nabla \n\qquad\text{strongly in }L^2(\R^3;\Mtre).
$$
In view of the convergence of the traces and the relations $\|\n_k\|_\infty\le 1$ and $\hdue(R_k\Delta J_{\n})\to 0$, we obtain
\begin{multline*}
\limsup_{k\to+\infty} \left[\int_{\partial \Om}(\n_k\cdot\normal)^2\,d\hdue+\int_{R_k}[(\n_k^+\cdot\normal_k)^2+(\n_k^-\cdot\normal_k)^2]\,d\hdue\right]\\
=
\int_{\partial \Om}(\n\cdot\normal)^2\,d\hdue+\int_{J_{\n}}[(\n^+\cdot\normal)^2+(\n^-\cdot\normal)^2]\,d\hdue\\
\le
\int_{\partial \Om}(\n\cdot\normal)^2\,d\hdue+\int_{\Gamma}[(\n^+\cdot\normal)^2+(\n^-\cdot\normal)^2]\,d\hdue.
\end{multline*}
Since $\Om$ has a smooth boundary, the sets defined as
$$
\Om_\eta:=\{x\in \Om\,:\, dist(x,\partial\Om)>\eta\}
$$
also have smooth boundaries for $\eta$ small enough (see e.g. \cite{KP}) with $\hdue(\partial\Om_\eta)\to \hdue(\partial\Om)$ as $\eta\to 0^+$. Moreover, 
$\hdue(\partial \Om_{\eta}\cap R_k)=0$ for a.e. $\eta$. By choosing $\eta_k\to 0$ such that $\hdue(\partial \Om_{\eta_k}\cap R_k)=0$, and using Remark  \ref{rem:traces} to ensure the convergence of the traces, the claim follows by considering the configuration $(\Om_{\eta_k},R_k\cap \Omb_{\eta_k})$ and the restriction of $\n_k$ to $\Om_{\eta_k}$ for sufficiently large $k$, since the Minkowski content of $R_k\cap \overline{\Om}_{\eta_k}$ coincides with its $\hdue$-measure thanks to the general properties of $R_k$ stated in Lemma \ref{lem:BC}.

\vskip10pt\noindent{\bf Step 3.} Assume $(\Om,\Gamma)\in \As(\R^3)$ is of the form given by the approximation result of Step 2. Specifically let $\Om\subset\subset D$ be smooth and $\Gamma=R\cap \Om$, where $R\subset \Omb$ is a suitable closed rectifiable set, such that $\hdue(\Gamma\setminus J_{\n})<\eps$ where $\eps>0$.
 \par
We claim that we can find a sequence $(A_k)_{k\in\N}$ of sets $A_k\subset\subset \Om$ with smooth boundaries such that 
 \begin{equation*}
|\Om\setminus A_k|\to 0,\qquad \limsup_{k\to+\infty}\hdue(\partial A_k)\le \hdue(\partial \Om)+2\hd(\Gamma)
\end{equation*}
and a sequence $(n_k)_{k\in\N}$ in $W^{1,2}(A_k;\mathbb S^2)$ such that
 \begin{equation*}
\n_k 1_{A_k} \to \n\qquad\text{strongly in }L^2(\R^3;\R^3),
\end{equation*}
\begin{equation*}
\nabla \n_k 1_{A_k} \to \nabla \n\qquad\text{strongly in }L^2(\R^3;M^{3\times 3}),
\end{equation*}
and
\begin{equation*}
\limsup_{k\to+\infty} \int_{\partial A_k}(\n_k\cdot \normal_k)^2\,d\hdue\le 
\int_{\partial\Om}(\n\cdot \normal)^2\,d\hdue+\int_{\Gamma}[(\n^+\cdot \normal)^2+(\n^-\cdot\normal)^2]\,d\hdue+3\sqrt\eps.
\end{equation*}
Indeed, setting $d_R(x):=dist(x,R)$, we can apply the coarea formula to write
\begin{equation*}
\int_0^{1/k} \hdue(\{d_R=s\})\,ds=\int_{\{d_R<\frac{1}{k}\}}|\nabla d_R|\,dx=\left|\left\{d_R<\frac{1}{k}\right\}\right|,
\end{equation*}
implying that we can select $s_k\le \frac{1}{k}$ such that
$$
 \hdue(\{d_R=s_k\})\le \frac{|\{d_R<1/k\}|}{1/k}.
$$
Since the two dimensional Minkowski content of $R$ coincides with $\hdue(R)$, it follows that
$$
\limsup_{k\to+\infty}\hdue(\{d_R=s_k\})\le 2\hdue(R)=2\hdue(\Gamma).
$$
Define 
$$
\Om_k:=\Om\setminus \{x\in\R^3\,:\, d_R<s_k\};
$$
then $\Om_k$ is an open set with finite perimeter such that $|\Om\setminus \Om_k|\to 0$, and (recall $\hdue(R\cap \partial \Om)=0$)
\begin{equation}
\label{eq:ak2}
\limsup_{k\to +\infty} \hdue(\partial^* \Om_k)\le \limsup_{k\to +\infty} \hdue(\partial \Om_k)\le \hdue(\partial \Om)+2\hdue(\Gamma).
\end{equation}
Let $(\Om,K)$ be a limit configuration of $(\Om_k,\emptyset)$ (up to a subsequence not relabelled) according to Theorem \ref{thm:lsc}. Since $J_{\bm n}\cap\Om\tsub K\tsub \Gamma$, it follows that
$$
\hdue(\Gamma\setminus K)<\eps.
$$
By considering $\n_k:=\n 1_{\Om_k}$, and applying Lemma \ref{lem:glambda} and the lower semicontinuity of Theorem \ref{thm:lsc}, for every $\alpha\in [-1/2,1]$ we obtain
\begin{multline*}
\int_{\partial\Om}[1+\alpha (\n \cdot \normal)^2]\,d\hdue+
\int_{\Gamma}[2+\alpha (\n^+ \cdot \normal)^2+\alpha (\n^- \cdot \normal)^2)]\,d\hdue-3\eps\\
\le
\int_{\partial\Om}[1+\alpha (\n \cdot \normal)^2]\,d\hdue+
\int_{K}[2+\alpha (\n^+ \cdot \normal)^2+\alpha (\n^- \cdot \normal)^2)]\,d\hdue\\
\le \liminf_{k\to+\infty}
\int_{\partial^*\Om_k}[1+\alpha (\n_k\cdot \normal_k)^2]\,d\hdue.
\end{multline*}
By choosing $\alpha=-\sqrt\eps$ in the above inequality, and considering \eqref{eq:ak2}, we deduce
$$
 \limsup_{k\to+\infty}\int_{\partial^*\Om_k}(\n_k\cdot \normal_k)^2\,d\hdue
 \le  \int_{\partial\Om}(\n\cdot \normal)^2\,d\hdue+ \int_{\Gamma} [(\n^+\cdot \normal)^2+(\n^-\cdot \nu)]^2\,d\hdue+3\sqrt{\eps}.
$$
Note that $\Om_k$ is open, but its boundary may not be smooth in principle. Therefore, a further approximation of each $\Om_k$ ``from inside'' is required to obtain the result. To this end, we assert that we can assume the mild regularity
$$
\hdue(\partial^*\Om_k)=\hdue(\partial\Om_k).
$$
Indeed, this follows directly by construction, as for a.e. $s\in ]0,1/k[$ the coarea formula for $d_R$ in $\Om$ (valid both in classical and BV senses) yields
$$
\hdue(\partial^*\{d_R<s\}\cap \Om)=\hdue(\{d_R=s\}\cap \Om)=\hdue(\partial\{d_R<s\}\cap \Om).
$$
According to \cite[Theorem 1.1]{Sch}, we can approximate $\Om_k$ from the inside through smooth sets $\Om_k^h$ such that $\hdue(\partial\Om_k^h)\to \hdue(\partial^*\Om_k)$ for $h\to+\infty$. By Remark \ref{rem:traces}, it follows that
$$
\lim_{h\to+\infty} \int_{\partial \Om_k^h} (\n_k\cdot\normal_h)^2\,d\hdue=\int_{\partial^*\Om_k}(\n_k\cdot \normal_k)^2\,d\hdue.
$$
The claim of step 3 is then justified by setting $A_k:=\Om_k^{h_k}$ and considering the restriction of $\n_k$ to $A_k$, where $h_k$ is sufficiently large.

\vskip10pt\noindent{\bf Step 4: Conclusion.} 
The desired approximation follows from those constructed in the preceding steps by choosing $\eps=\eps_k\to 0$ and employing a diagonal argument. Indeed, this process yields a sequence $(A_k)_{k\in\N}$ of open sets in $D$ with smooth boundaries such that $dist(A_k,\partial D)>c$, satisfying the convergence properties in \eqref{eq:Akprop}, \eqref{eq:nk1}, \eqref{eq:nk2} and \eqref{eq:nk3}. 
\end{proof}

\section{Proof of Theorem \ref{thm:main2}}
\label{sec:mainpf}
The present section is devoted to proving the existence of stable configurations in the generalized class of admissible geometries.

\begin{proof}[Proof of Theorem \ref{thm:main2}]
Let $(\Omega_k,\Gamma_k)_{k\in\N}$ be a minimizing sequence for problem \eqref{eq:mainpb2} with
$$
\tilde\free(\Om_k,\Gamma_k)\to \beta>0
$$
(recall that $\tilde\free(\Om_k,\Gamma_k)\ge \hdue(\partial^*\Om_k) \ge c |\Om_k|^{2/3}=cm^{2/3}$ by the isoperimetric inequality).
Consequently, we have
\begin{equation}
\label{eq:boundper3}
\hdue(\partial^*\Om_k\cup \Gamma_k)\le C
\end{equation}
\par
Let us apply a concentration-compactness alternative argument to the sequence $(1_{\Omega_k})_{k\in\N}$ of sets of finite perimeter (see e.g.  \cite[Theorem 3.1]{de2014existence}). For every $r>0$, let us consider the monotone increasing functions $\alpha_k:[0,+\infty[\to [0,+\infty[$ defined by
\begin{equation}
\label{eq:alpha_kcube}
\alpha_k(r):=\sup_{y \in \R^d}|\Om_k \cap Q_r(y)|,
\end{equation}
where $Q_r(y)$ is the cube centered at $y$ with side length $r$. By Helly's theorem, up to a subsequence, we can assume that
\begin{equation*}
\alpha_k \to \alpha
\qquad\text{pointwise on }[0,+\infty[
\end{equation*}
for a suitable monotone increasing function $\alpha:[0,+\infty[\to [0,+\infty[$. 
\par
The following situations may occur:
\begin{itemize}
\item[(a)] {\it Vanishing}: $\lim_{r\to+\infty}\alpha(r)=0$;
\item[(b)] {\it Dichotomy}: $\lim_{r\to+\infty}\alpha(r)=\bar \alpha \in ]0,m[$;
\item[(c)] {\it Compactness}: $\lim_{r\to+\infty}\alpha(r)=m$.
\end{itemize}
Let us deal with the three cases separately.

\vskip10pt\noindent{\bf Step 1: Vanishing cannot occur.} Assume, for contraddiction, that the vanishing case does occur. This implies that for every $r>0$ we have
\begin{equation}
\label{eq:starvanish}
\sup_{y\in\R^3}\left|\Omega_k\cap Q_r(y)\right|\to 0.
\end{equation}
Consider a grid in $\R^3$ composed of cubes $Q$ such that $\hdue(\partial^*\Om_k\cap \partial Q)=0$ (this is always possible because $\Om_k$ has finite perimeter).
For every $k\in\N$, we can find a cube $Q_1(y_k)$ with $y_k\in\R^3$ such that
$$
\hdue(\partial^*\Om_k\cap Q_1(y_k))\le K |\Om_k\cap Q_1(y_k)|,
$$
where $K$ is a fixed constant such that $K>\frac{2}{m}\tilde\free(\Om_k,\Gamma_k)$. Indeed, if the inequality above holds with a strict $>$ for every cube in the grid, summing over all the cubes would lead to a contradiction. By applying the immersion of $BV$ into $L^{3/2}$ on $Q_1(y_k)$ to $1_{\Om_k}$, we get
$$
C_1 |\Om_k\cap Q_1(y_k)|^{\frac{2}{3}}\le \hdue(\partial^*\Om_k\cap Q_1(y_k))+ |\Om_k\cap Q_1(y_k)|\le (K+1)  |\Om_k\cap Q_1(y_k)|,
$$
which is in contraddiction with \eqref{eq:starvanish}.

\vskip10pt\noindent{\bf Step 2: Dichotomy cannot occur.}
Assume, for contradiction, that the dichotomy case does occur. Then there exists $\tilde\alpha\in]0,m[$ such that the following assertion holds true: we can find $x_k\in\R^3$ and $0<r_k<R_k$, $R_k-r_k\to+\infty$, such that setting
$$
\Omega_{k,1}:=\Omega_k\cap B_{r_k}(x_k),\qquad \Gamma_{k,1}:=\Gamma_k\cap B_{r_k}(x_k)
$$
and
$$
\Omega_{k,2}:=\Omega_k\setminus \bar B_{R_k}(x_k),\qquad \Gamma_{k,2}:=\Gamma_k\setminus \bar B_{R_k}(x_k),
$$
we have
$$
\left||\Omega_{k,1}|-\tilde\alpha\right|\to 0,\qquad \left||\Omega_{k,2}|-(m-\tilde\alpha)\right|\to 0,
$$
with
\begin{equation}
\label{eq:cut}
\hdue(\Omega_k\cap\partial B_{r_k}(x_k))\to 0,\qquad \hdue(\Omega_k\cap\partial B_{R_k}(x_k))\to 0.
\end{equation}
Notice that 
$$
\hdue(\partial^*\Om_k) \ge \hdue(\partial^*\Om_{k,1})+\hdue(\partial^*\Om_{k,2})+e_k
$$
where $e_k\to 0$. 
\par
Let $\eta_{k,i}>0$ be such that $|\eta_{k,i}\Om_{k,i}|=|\Om_k|=m$, and consider the dilated configurations $(\eta_{k,i}\Om_{k,i},\eta_{k,i}\Gamma_{k,i})\in \As_m(\R^3)$. A simple rescaling argument shows that
$$
\eta_{k,i}^2\tilde\free(\Om_{k,i},\Gamma_{k,i})\le \tilde\free(\eta_{k,i}\Om_{k,i},\eta_{k,i}\Gamma_{k,i})\le \eta_{k,i}\tilde\free(\Om_{k,i},\Gamma_{k,i}).
$$
Let $\n_k\in \Ns(\Om_k,\Gamma_k)$ realize the minimal energy $\tilde\free(\Om_k,\Gamma_k)$. Since the restrictions of $\n_k$ to $\Om_{k,1}$ and $\Om_{k,2}$ are admissible directors, we can write, thanks to \eqref{eq:cut},
\begin{multline*}
\tilde\free(\Om_k,\Gamma_k) \ge \tilde\free(\Om_{k,1},\Gamma_{k,1})+\tilde\free(\Om_{k,2},\Gamma_{k,2})+\tilde e_k\\
\ge \eta_{k,1}^{-1}\tilde\free(\eta_{k,1}\Om_{k,1},\eta_{k,1}\Gamma_{k,1})+\eta_{k,2}^{-1}\tilde\free(\eta_{k,2}\Om_{k,2},\eta_{k,2}\Gamma_{k,2})+\tilde e_k,
\end{multline*}
where $\tilde e_k\to 0$. Taking the limit as $k\to\infty$, we get
$$
\beta\ge \left( \frac{\tilde \alpha}{m}\right)^{\frac{1}{3}}\beta+\left( \frac{m-\tilde \alpha}{m}\right)^{\frac{1}{3}}\beta
$$
which is a contradiction.

\vskip10pt\noindent{\bf Step 3. Compactness and existence of a minimizer.} It follows from Steps $1$ and $2$ that compactness occurs. Therefore, there exists a set of finite perimeter $\Omega \subset\R^3$ such that, up to a translation,
\begin{equation}
\label{eq:Omega_kconvergence}
1_{\Omega_k}\to 1_\Omega \qquad\text{strongly in }L^1(\R^3).
\end{equation}
Let $\Gamma\subset \Om^1$ be given by Theorem \ref{thm:lsc}, so that $(\Om,\Gamma)\in \As_m(\R^3)$. According to Theorem \ref{th:th_opt_director}, let $\n_k\in \ns(\Om_k,\Gamma_k)$ be such that
$$
\tilde\free(\Om_k,\Gamma_k)=\tilde\free[(\Om_k,\Gamma_k),\n_k].
$$
Using the estimates on the bulk energy given by Remark \ref{rem:bulk} and the bound \eqref{eq:boundper3} we obtain $\|\nabla \n_k\|_2^2\le C$ for some $C>0$ independent of $k$.
\par
Let $\n\in \ns(\Om,\Gamma)$ be the limit function of $(\n_k)_{k\in\N}$ (up to subsequences) given by Theorem \ref{thm:lsc}.
Accordingly, by the lower semicontinuity of quasiconvex bulk energies in SBV (see \cite[Theorem 5.12]{AFP}) we have
\begin{multline*}
\int_{\Om}W(\n,\nabla \n)\,dx=\int_{D}W(\n,\nabla \n)\,dx\\
\le \liminf_{k\to+\infty}\int_{D}W(\n_k,\nabla \n_k)\,dx=\liminf_{k\to+\infty}\int_{\Om_k}W(\n_k,\nabla \n_k)\,dx,
\end{multline*}
while for $\lambda\in [-1/2,1]$ we obtain, by Lemma \ref{lem:glambda} and Theorem \ref{thm:lsc}, that
\begin{multline*}
\int_{\partial^* \Om}g(\n,\normal)\,d\hdue+\int_{\Gamma}[g(\n^+,\normal)+g(\n^-,\normal)]\,d\hdue
\\
\le \liminf_{k\to+\infty}\left(
\int_{\partial^* \Om_k}g(\n_k,\normal_k)\,d\hdue+\int_{\Gamma_k}[g(\n^+_k,\normal_k)+g(\n^-_k,\normal_k)]\,d\hdue
\right).
\end{multline*}
Thus 
$$
\tilde\free(\Om,\Gamma)\le \tilde\free[(\Om,\Gamma),\n]\le \liminf_{k\to+\infty}\tilde\free[(\Om_k,\Gamma_k),\n_k]=\liminf_{k\to+\infty}\tilde\free(\Om_k,\Gamma_k),
$$
implying that $(\Om,\Gamma)\in \As_m(D)$ is a minimizer for problem \eqref{eq:mainpb2}.

\vskip10pt\noindent{\bf Step 4: Boundedness of minimizers.} 
Let $(\Om,\Gamma)\in \As_m(\R^3)$ be a minimizer for problem \eqref{eq:mainpb2}, which is well posed according to Step 3. Let us prove that $\Om$ is bounded. Assume, for sake of contradiction, that $\Om$ is unbounded: without loss of generality, we can assume that $\Om$ is unbounded in the positive $x_1$ direction, i.e.,
$$
h(t)=|\Om \cap \{x_1>t\}|>0\qquad\text{for every $t\in\R$}.
$$
Clearly, $h(t)\to 0$ as $t\to +\infty$. For sufficiently large $t$, let
$$
\Om_t:=\Om \cap \{x_1<t\}\qquad\text{and}\qquad \Gamma_t:=\Gamma \cap  \{x_1<t\}.
$$
Define $\eta_t:=\left(\frac{m}{|\Om_t|}\right)^{1/3}$. Then $(\eta_t \Om_t, \eta_t\Gamma_t)\in \As_m(\R^3)$, and by the minimality of $(\Om,\Gamma)$ and a simple rescaling argument we obtain
\begin{equation}
\label{eq:ineq-tlarge}
\tilde\free(\Om,\Gamma)\le \tilde\free(\eta_t\Om_t,\eta_t\Gamma_t)\le \eta_t\tilde\free(\Om_t,\Gamma_t).
\end{equation}
Let $\n\in \Ns(\Om,\Gamma)$ realize the total energy of $(\Om,\Gamma)$ yielding $\tilde\free(\Om,\Gamma)$, and define
\begin{multline*}
E(t):=\int_{\Om\cap \{x_1<t\}}W(\n,\nabla \n)\,dx+\int_{\partial^*\Om \cap \{x_1<t\}}[1+\lambda (\n\cdot \normal)^2]\,d\hdue\\
+\int_{\Gamma \cap \{x_1<t\}}[2+\lambda (\n^+\cdot \normal)^2+\lambda  (\n^+\cdot \normal)^2]\,d\hdue.
\end{multline*}
From \eqref{eq:ineq-tlarge} and given that $\lambda\in [-1/2,1]$), for a.e. $t$ large enough we have
\begin{multline*}
E(t)+\frac{1}{2}\hdue(\partial^*\Om\cap \{x_1>t\})\le \left(\frac{m}{m-h(t)}\right)^{1/3}\left[ E(t)+C_1 \hdue(\Om \cap \{x_1=t\})\right]\\
\le(1+C_2 h(t))(E(t)+C_1\hdue(\Om \cap \{x_1=t\})).
\end{multline*}
This implies
$$
\frac{1}{2}\hdue(\partial^*\Om\cap \{x_1>t\})\le C_3\left( \hdue(\Om \cap \{x_1=t\})+h(t)\right)
$$
for suitable $C_1,C_2,C_3>0$. Adding $\hdue(\Om \cap \{x_1=t\})$ to both sides and applying the isoperimetric inequality to the set $\Om\cap \{x_1>t\}$, we get
$$
h(t)^{\frac{2}{3}}\le C (h(t) +\hdue(\Om \cap \{x_1=t\})),
$$
so that for sufficiently large $t$
$$
h(t)^{\frac{2}{3}}\le \tilde C\hdue(\Om \cap \{x_1=t\}),
$$
where $C,\tilde C>0$. Since $\hdue(\Om \cap \{x_1=t\})=-h'(t)$, we reach a contradiction:
$$
+\infty=\int_{t_0}^{+\infty}\,dt\le -\int_{t_0}^{+\infty} \frac{h'(t)}{h(t)^{2/3}}\,dt=-3\left[h(t)^{1/3}\right]_{t_0}^{+\infty}<+\infty.
$$

\vskip10pt\noindent{\bf Step 5: Conclusion.} To conclude the proof, we need to verify equality \eqref{eq:reg-eq2}. Let $(\Om,\Gamma)\in \As_m(\R^3)$ be a minimizer for problem \eqref{eq:mainpb2}. By Step 4, we know that $\Om$ is bounded. Therefore, by Theorem \ref{thm:approx}, for every $\eps>0$ there exists a smooth set $A_\eps\subset \R^3$ such that $m-\eps<|A_\eps|<m+\eps$ and
$$
\free(A_\eps)< \tilde\free(\Om,\Gamma)+\eps.
$$
Define $\eta_\eps:=\left( \frac{m}{|A_\eps|}\right)^{\frac{1}{3}}$, so that $\eta_\eps A\in \As_m^{reg}(\R^3)$. Since
$$
\eta_\eps^2\free(A_\eps)\le \free(\eta_\eps A_\eps)\le \eta_\eps\free(A_\eps),
$$
we infer that
$$
\inf_{\As^{reg}_m(\R^3)}\free\le \eta_\eps( \tilde\free(\Om,\Gamma)+\eps).
$$
Thus, letting $\eps\to 0^+$ we obtain the desired result.
\end{proof}

\begin{remark}
\label{}
{\rm
If $(\Om,\Gamma)$ is a minimum of the main problem \eqref{eq:mainpb2}, and if $\n\in \ns(\Om,\Gamma)$ is a director which provides the associated free energy, we get easily that $\Gamma=J_{\n}\cap \Om^1$. This shows that the compactness needed is Step 3 can be simplified by looking at the sequence $(\n_k)_{k\in \N}$ of associated directors, avoiding the technical tool of $\sigma^2$-convergence of rectifiable sets (on which Theorem \ref{thm:lsc} is based).
As mentioned in the Introduction, considering configurations $(\Om,\Gamma)$ with $\Gamma$ possibly larger than the jump set of the director providing the associated free energy permits to maintain a closer link with the classical approach, and could be important for dynamical problems of liquid crystal drops in addressing the evolving behaviour of inner boundaries
}
\end{remark}

As mentioned in Section \ref{sec:main}, the existence of stable configurations can be established (with a proof that avoids the concentration-compactness alternative) even when the admissible shapes are contained in a given box, which we consider as a {\it design region} for the problem.
\par
Let  $D\subset\R^3$ be a bounded open set, and let $0<m<|D|$. We set
\begin{equation}
\label{eq:AsmD}
\As_m(D):=\{(\Om,\Gamma)\in \As(\R^3), \Om\subset D, |\Om|=m\}
\end{equation}
and
$$
\As^{reg}_m(D):=\{A\subset D, |A|=m, \text{$A$ is open with a smooth boundary}\}.
$$

The following result holds true.

\begin{theorem}[\bf Stable configurations in a box]
\label{thm:main}
Let $D\subset\R^3$ be a convex open bounded set, and let $0<m<|D|$. 
Assume that Ericksen's inequalities \eqref{eq:eineq} hold and let the constants of the surface energy satisfy $\gamma>0$ and $-\frac{1}{2}\le \lambda\le 1$. Then the minimum problem
\begin{equation}
\label{eq:mainpb}
\min_{\As_m(D)} \tilde\free
\end{equation}
is well posed. Moreover,
\begin{equation}
\label{eq:reg-eq}
\min_{\As_m(D)} \tilde\free=\inf_{\As_{m}^{reg}(D)} \free.
\end{equation}
\end{theorem}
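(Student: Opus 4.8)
The plan is to run the Direct Method, exploiting the fact that confinement in the bounded box $D$ makes the compactness step essentially automatic, and then to transfer the minimal energy to smooth competitors by combining the interior approximation of Theorem~\ref{thm:approx} with dilations adapted to the convexity of $D$. For the \emph{well-posedness}, I would take a minimizing sequence $(\Om_k,\Gamma_k)\in\As_m(D)$. Since $g\ge \gamma/2>0$ on $\mathbb{S}^2\times\mathbb{S}^2$ when $-\tfrac12\le\lambda\le 1$, the surface part of $\tilde\free$ controls $\hdue(\partial^*\Om_k\cup\Gamma_k)$, so this quantity is uniformly bounded. Because every $\Om_k$ sits in the fixed bounded set $D$, the functions $1_{\Om_k}$ are bounded in $BV(\R^3)$ with equibounded supports, and $BV$-compactness yields $1_{\Om_k}\to 1_\Om$ strongly in $L^1(\R^3)$ for some set of finite perimeter $\Om$ with $|\Om|=m$ and $|\Om\setminus D|=0$; for convex $D$ the density-one set $\Om^1$ lies in $D$, so the limit interface $\Gamma$ produced by Theorem~\ref{thm:lsc} satisfies $(\Om,\Gamma)\in\As_m(D)$. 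Choosing for each $k$ an optimal director $\n_k$ (Theorem~\ref{th:th_opt_director}), the bulk estimates of Remark~\ref{rem:bulk} give $\|\nabla\n_k\|_2\le C$, and Theorem~\ref{thm:lsc} furnishes a limit $\n\in\Ns(\Om,\Gamma)$ along which the bulk energy is lower semicontinuous by quasiconvexity and the surface energy is lower semicontinuous by Lemma~\ref{lem:glambda} together with \eqref{eq:superficie}. Hence $\tilde\free(\Om,\Gamma)\le\liminf_k\tilde\free(\Om_k,\Gamma_k)$ and $(\Om,\Gamma)$ is a minimizer. In contrast with Theorem~\ref{thm:main2}, neither the concentration--compactness alternative nor a boundedness argument is needed, since $D$ already provides compactness.

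For the identity \eqref{eq:reg-eq}, the inequality $\min_{\As_m(D)}\tilde\free\le\inf_{\As_m^{reg}(D)}\free$ is immediate, because $(A,\emptyset)\in\As_m(D)$ with $\tilde\free(A,\emptyset)=\free(A)$ for every smooth $A$. For the reverse inequality I would start from a minimizer $(\Om,\Gamma)$ and must produce smooth competitors of volume exactly $m$ inside $D$ with energy approaching $\tilde\free(\Om,\Gamma)$. Fix an interior point $x_0$ with $B_\rho(x_0)\subset D$ and, for $\tau\in(0,1)$, dilate about $x_0$ to obtain $(\Om_\tau,\Gamma_\tau):=x_0+\tau((\Om,\Gamma)-x_0)$. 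By convexity $\Om_\tau\subset\subset D_\tau:=x_0+\tau(D-x_0)\subset\subset D$, and using the dilated optimal director together with the continuity of $W$ one gets $\tilde\free(\Om_\tau,\Gamma_\tau)\to\tilde\free(\Om,\Gamma)$ as $\tau\to1$. Now $(\Om_\tau,\Gamma_\tau)$ meets the hypotheses of Theorem~\ref{thm:approx}, which yields smooth sets $A_j$ that are interior approximants of $\Om_\tau$ (hence contained in $D_{\tau'}$ for any fixed $\tau'\in(\tau,1)$ once $j$ is large) with $|A_j|\to\tau^3m$ and, via \eqref{eq:nk3} combined with the semicontinuity of Theorem~\ref{thm:lsc} tested with the densities $1+\alpha(\cdot\,)^2$ for $\alpha=0$ and $\alpha=1$ (which upgrades the $\limsup$ in \eqref{eq:nk3} to a genuine limit of both the perimeters and the normal traces), with $\free(A_j)\to\tilde\free(\Om_\tau,\Gamma_\tau)$.

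The step specific to the confined problem is \emph{restoring the volume inside the box}, since the interior approximation forced a loss of volume from $m$ to $\tau^3m$. Dilating $A_j\subset D_{\tau'}$ about $x_0$ by the factor $1/\tau'$ maps $D_{\tau'}$ homeomorphically onto $D$, so $B_j:=x_0+\tfrac1{\tau'}(A_j-x_0)$ is an open smooth subset of $D$ with $|B_j|\to(\tau/\tau')^3m<m$ and, by the scaling of $\free$ under dilation, $\limsup_j\free(B_j)\le\tilde\free(\Om,\Gamma)+o(1)$ as $\tau,\tau'\to1$. A final small correction brings the volume to exactly $m$: since $(\tau/\tau')^3m<m<|D|$, I add to $B_j$ a disjoint smooth ball of volume $m-|B_j|\to0$ placed in the open set $D\setminus\overline{B_j}$ and carrying a constant director, whose contribution to $\free$ is $O(\delta^2)+O(\delta^3)\to0$ for both the nematic and cholesteric densities. (Had the approximation overshot the volume, I would instead contract $B_j$ slightly towards $x_0$, which keeps it inside $D$ by convexity and lowers the energy.) A diagonal argument letting $j\to\infty$ and then $\tau,\tau'\to1$ gives $\inf_{\As_m^{reg}(D)}\free\le\tilde\free(\Om,\Gamma)=\min_{\As_m(D)}\tilde\free$, which together with the easy inequality yields \eqref{eq:reg-eq}.

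The genuinely delicate point is precisely this volume--confinement interaction: the interior approximation of Theorem~\ref{thm:approx} only works for $\Om\subset\subset D$, which costs volume, and a naive rescaling back to volume $m$ sits exactly at the threshold $\eta\tau=1$ where containment in $D$ can fail. What makes it work is the convexity of $D$, so that dilations about an interior point are monotone for inclusion (guaranteeing the rescaled competitor stays in $D$ once one dilates by a factor strictly below $1/\tau$) and contractions towards $x_0$ stay inside, combined with the freedom to absorb the residual volume by a vanishing ball in the free region $D\setminus\overline{B_j}$. For the ``alternatively smooth'' box the same scheme applies after replacing the dilations by an inward normal deformation that pushes $\Om$ into a compact subset of $D$ while keeping $\hdue(\partial^*\cdot)$ and the traces under control.
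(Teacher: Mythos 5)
Your existence argument coincides with the paper's: the bound $g\ge\gamma/2$ (valid for $-\tfrac12\le\lambda\le 1$) gives the uniform estimate on $\hdue(\partial^*\Om_k\cup\Gamma_k)$, $BV$-compactness inside the fixed box replaces the concentration--compactness alternative of Theorem \ref{thm:main2}, and Theorem \ref{thm:lsc}, Lemma \ref{lem:glambda} and quasiconvexity give lower semicontinuity along the minimizing sequence. That half of the proposal is correct and is exactly the paper's proof.

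The identity \eqref{eq:reg-eq} is where there is a genuine gap, and it sits exactly at the point you yourself single out as delicate. Your volume restoration needs a ball of volume $m-|B_j|$ that is \emph{disjoint} from $B_j$ and contained in $D$. For fixed $\tau<\tau'$ the deficit tends, as $j\to\infty$, to $m\bigl(1-(\tau/\tau')^3\bigr)$, i.e.\ it is of order $\tau'-\tau$ (your parenthetical ``$m-|B_j|\to 0$'' holds only in the diagonal limit, not at fixed parameters, and the ball must be placed at fixed parameters). On the other hand, no regularity of the minimizer $\Om$ is available at this stage: $\Om$ could be, for instance, a countable union of balls with dense centers, summable squared radii and total volume $m<|D|$ --- a perfectly admissible set of finite perimeter whose complement in $D$ contains \emph{no} ball at all. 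For such an $\Om$, every fixed ball $B$ with $|B\cap \Om_\tau|>0$ satisfies $|A_j\cap B|\to|\Om_\tau\cap B|>0$, so $B$ eventually meets $A_j$; hence any ball disjoint from $A_j$ must essentially lie in the convex shell between $x_0+\tau(D-x_0)$ and $x_0+\tau'(D-x_0)$, and after dilation any ball disjoint from $B_j$ has volume at most of order $(\tau'-\tau)^3+o_j(1)$. Comparing: if $\tau'-\tau\to 0$ (needed to make the deficit vanish) the ratio deficit/room blows up like $(\tau'-\tau)^{-2}$, while if $\tau'-\tau$ stays bounded below the fit would force $(\tau'-\tau)^2\gtrsim m/C_D$, which already fails when $D$ is the unit ball and $m=|D|/2$. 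So no coupling of $\tau,\tau'$ rescues the construction: the scheme breaks down precisely for the (possibly wild) minimizers it must handle.

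This is the difficulty the paper's construction is built to avoid, and it explains a step you hoped to bypass. The paper's compensating set $R_i$ is a parallelepiped that is allowed to \emph{overlap} the smooth approximant $A$: only the measure $|R_i\setminus A|$ matters, and a cube $Q_i$ in which $|A^c\cap Q_i|$ is large enough is produced by a pigeonhole argument --- no ball inside the complement is ever required. The price is that a director must then be defined on $R_i\setminus A$, which is done by extending the optimal director of $A$ to $W^{1,2}(D;\mathbb S^2)$ via the Hardt--Kinderlehrer--Lin lemma \cite{HKL}, after which $A\cup R_i$ is re-smoothed by interior approximation (with Remark \ref{rem:traces} controlling the normal traces) and finally contracted to volume exactly $m$, convexity of $D$ guaranteeing containment. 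In other words, the extension lemma is not an avoidable convenience: it is what makes the volume correction work for an arbitrary finite-perimeter minimizer. Your disjoint-ball idea (which would indeed be simpler, and would even remove the two-dimensional obstruction noted in Remark \ref{rem:dim-due}) would become correct only if one knew beforehand that the complement of some minimizer contains a ball of definite size, and no such information is available.
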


\begin{proof}
Let $(\Om_k,\Gamma_k)_{k\in\N}$ be a minimizing sequence for the problem. Given that $\lambda\ge -1/2$, we can easily obtain the bound
\begin{equation}
\label{eq:boundper2}
\hdue(\partial^*\Om_k\cup \Gamma_k)\le C
\end{equation}
for some constant $C$ independent of $k$. Hence, there exists a set $\Om\subset D$ of finite perimeter such that, up to a subsequence,
$$
1_{\Om_k}\to 1_\Om\qquad\text{strongly in }L^1(\R^3).
$$
We can then follows the procedure outlined in Step 4 of the proof of Theorem \ref{thm:main2} to obtain a rectifiable set $\Gamma$ such that $(\Om,\Gamma)$ is a minimizer. Consequently, problem \eqref{eq:mainpb} is well posed.
\par
Let us address equality \eqref{eq:reg-eq}. Without loss of generality, we assume that the convex box $D$ contains the origin. Let $(\Om,\Gamma)\in \As_m(D)$ be a minimizer. For every $\eps>0$ let $\eta_\eps<1$ be such that $(\eta_\eps\Om,\eta_\eps\Gamma)\in \As_{m-\eps}(D)$. A simple rescaling argument shows that
$$
\eta_\eps^2\tilde\free(\Om,\Gamma)\le \tilde\free(\eta_\eps\Om,\eta_\eps\Gamma)\le \eta_\eps\tilde\free(\Om,\Gamma).
$$
Furthermore, by Theorem \ref{thm:approx}, there exists a smooth set $A\subset D$ such that $m-\frac{3}{2}\eps<|A|<m-\frac{1}{2}\eps$ and
$$
\free(A)< \tilde\free(\eta_\eps\Om,\eta_\eps \Gamma)+\eps< \tilde\free(\Om,\Gamma)+\eps.
$$
Since $A$ does not satisfy the volume constraint, we seek to replace $A$ with a smooth domain of volume $m$ such that the total free energy of the new domain is only a small perturbation of the total free energy of $A$.
Let $A\subset\subset P\subset\subset D$, where $P$ is the union of a finite number of cubes, each of side length $l$. We then partition the cubes of $P$ into $n_\eps^3$ small cubes, each with side length $l/n_\eps$, where $n_\eps\to +\infty$ is chosen such that
$$
\frac{|D|-m}{n_\eps^3}>2\eps.
$$
There exists one such cube, say $Q_i$, for which
$$
|A^c\cap Q_i|\ge \frac{1}{n_\eps^3}|A^c|> \frac{|D|-m}{n_\eps^3}>2\eps.
$$
We can then select an open parallelepid $R_i\subset\subset Q_i$ such that $\hs^2(\partial R_i\cap \partial A)=0$ and $|A\cup R_i|=m+e_\eps$, where $e_\eps\to 0$ as $\eps\to 0$. The set $A\cup R_i\subset D$ has finite perimeter with
$$
\hs^2(\partial^*(A\cup R_i))\le \hs^2(\partial A)+\hs^2(\partial R_i)=\hs^2(\partial A)+ \frac{6 l^3}{n_\eps^2}.
$$
Let $\n_A\in W^{1,2}(A;{\mathbb S}^2)$ be the director of $A$ which realizes the minimal energy according to \eqref{eq:Etot}. By \cite[Lemma 1.1]{HKL} we can extend $\n_A$ to an element of $W^{1,2}(D;{\mathbb S}^2)$. Since $\n_A$ is also an admissible director also for $A\cup R_i$, we deduce that
$$
\tilde\free(A\cup R_i,\emptyset)< \free(A)+\hat e_\eps,
$$
where $\hat e_\eps\to 0$ as $\eps\to 0$.
\par
Since $\hs^2(\partial(A\cup R_i))=\hs^2(\partial^*(A\cup R_i))$ by construction, and using \cite[Theorem 1.1]{Sch}, we can approximate $A\cup R_i$ from the inside by a sequence of smooth open sets $(A_n)_{n\in\N}$ such that $\hs^2(\partial A_n)\to \hs^2(\partial^*(A\cup R_i))$. By Remark \ref{rem:traces}, we have 
$$
\lim_{n\to+\infty}\int_{\partial A_n}(\n_A\cdot \normal_n)^2\,d\hs^2=\int_{\partial^*(A\cup R_i)}(\n_A\cdot \normal)^2\,d\hs^2
$$
where $\normal_n$ denotes the outer unit normal to $A_n$. Consequently, we obtain
$$
\limsup_{n\to +\infty}\free(A_n)< \free(A)+\hat e_\eps
$$
By choosing $\eta_n$ such that $|\eta_nA_n|=m$, we can find a smooth set 
$\hat{A}_\eps:=\eta_nA_n\subset D$ with $|\hat{A}_\eps|=m$ (the explicit dependence on $\varepsilon$ refers to the range within which the volume of $A$ is contained) and
$$
\free(\hat A_\eps)< \tilde\free(\Om,\Gamma)+\eps+\hat e_\eps.
$$
Taking the limit as $\eps\to 0$, we obtain
$$
\inf_{\As_m^{reg}(D)}\free\le \inf_{\As_m(D)}\tilde\free.
$$
Since the reverse inequality is trivial, it follows that \eqref{eq:reg-eq} holds.
\end{proof}

\begin{remark}
\label{rem:box}
{\rm
The convexity assumption for the design region $D$ can be replaced by a smoothness condition. Specifically, instead of the contraction with respect to the origin used in the proof, we can employ the flow of a smooth vector field that points inward at every point on the boundary of $D$ and has negative divergence inside $D$ (ensuring that the flow reduces the volume of $D$).
}
\end{remark}

\begin{remark}
\label{rem:dim-due}
{\rm
The existence of stable configurations, whether in the whole space or within a box, can be established in two dimensions as well. This is because the technical results presented in Section \ref{sec:tech}, as previously noted, apply to any dimension. However, when dealing with a box, proving the density results \eqref{eq:reg-eq} for smooth sets with a prescribed volume $m$ encounters a technical issue related to the extension of an element of $W^{1,2}(A;\mathbb S^1)$ to $W^{1,2}(D;\mathbb S^1)$, where $A\subset\subset D$ is open and smooth. This extension is not always possible because $p=2$ is the critical exponent in two dimensions. Nevertheless, the arguments presented above show that a weaker form of the density result is valid:
$$
\min_{\As_m(D)}\tilde\free=\lim_{\eps\to 0^+} \inf_{\As_{m-\eps}^{reg}(D)}\free.
$$
}
\end{remark}

\section{Tangential boundary conditions}
\label{sec:tan}
In this section, we show that the techniques from the previous sections can be applied to address tangential boundary condition for the directors, i.e. $\n\cdot \normal=0$.
\par
Given $(\Om,\Gamma)\in \As(\R^3)$, define
$$
\Nstg(\Om,\Gamma):=\{\n \in \Ns(\Om,\Gamma)\,:\, \text{$\n\cdot \normal=0$ on $\partial^*\Om$ and $\n^\pm\cdot \normal=0$ on $\Gamma$}\}.
$$
If $\Nstg(\Om,\Gamma)\not=\emptyset$, then we will say that $(\Om,\Gamma)\in \Astg(\R^3)$.
\par
For directors $\n\in \Nstg(\Om,\Gamma)$ the surface free energy reduces to
$$
\tilde\free_s[(\Om,\Gamma), \n]=\gamma[\hdue(\partial^*\Om)+2\hdue(\Gamma)],
$$
indicating that it is purely geometric since the terms depending on the traces of $\n$ vanish on $\partial^*\Om\cup \Gamma$.
\par
We define the total energy of the shape $(\Om,\Gamma)\in \Astg(\R^3)$ as
\begin{equation}
\label{eq:Etottg}
\tilde\freetg(\Om,\Gamma):=\min_{\n\in \Nstg(\Om,\Gamma)}\tilde\free\left[(\Omega,\Gamma),\n\right].
\end{equation}

\begin{theorem}\label{th:th_opt_directortg}
Assume that Ericksen's inequalities \eqref{eq:eineq} hold and let the constant of the surface energy satisfy $\gamma>0$. Then the minimum in \eqref{eq:Etottg} is attained.
\end{theorem}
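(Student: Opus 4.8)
The plan is to run the direct method exactly as in the proof of Theorem \ref{th:th_opt_director}, taking advantage of the fact that on $\Nstg(\Om,\Gamma)$ the surface energy $\tilde\free_\mathrm{s}$ collapses to the purely geometric constant $\gamma[\hdue(\partial^*\Om)+2\hdue(\Gamma)]$, so that minimizing $\tilde\free$ over $\Nstg(\Om,\Gamma)$ amounts to minimizing the bulk energy $\tilde\free_\mathrm{b}$. First I would fix a minimizing sequence $(\n_k)_{k\in\N}\subset \Nstg(\Om,\Gamma)$, which is nonempty since $(\Om,\Gamma)\in\Astg(\R^3)$. Using the lower bound in Remark \ref{rem:bulk}(a) together with $|\n_k|=1$ a.e.\ in $\Om$ and $|\n_k^\pm|\le 1$ on $\partial^*\Om\cup\Gamma$, precisely as in Theorem \ref{th:th_opt_director}, I obtain a uniform bound on $\|\n_k\|_\hs$, so that $(\n_k)_{k\in\N}$ is bounded in the Hilbert space $\hs(\Om,\Gamma)$.

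By weak compactness and Proposition \ref{prop:hs}, after passing to a subsequence there is $\n\in\Ns(\Om,\Gamma)$ which is the weak $\hs$-limit of $(\n_k)_{k\in\N}$, with $\nabla\n_k\weak\nabla\n$ weakly in $L^2(\R^3;\Mtre)$, $\n_k\to\n$ strongly in $L^1(\R^3;\R^3)$, and, crucially, $\n_k^\pm\to\n^\pm$ strongly in $L^2(\partial^*\Om\cup\Gamma;\R^3)$. The tangential constraint then passes to the limit: since $\n_k\cdot\normal=0$ on $\partial^*\Om$ and $\n_k^\pm\cdot\normal=0$ on $\Gamma$, the strong $L^2$ convergence of the traces (which, along a further subsequence, holds $\hdue$-a.e.\ on $\partial^*\Om\cup\Gamma$) yields $\n\cdot\normal=0$ on $\partial^*\Om$ and $\n^\pm\cdot\normal=0$ on $\Gamma$. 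Hence $\n\in\Nstg(\Om,\Gamma)$, i.e.\ the constraint set is sequentially weakly closed in $\hs(\Om,\Gamma)$.

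It remains to pass to the lower limit in the energy. The surface term is the same constant for every element of $\Nstg(\Om,\Gamma)$, so it requires no semicontinuity argument. For the bulk term I would invoke the lower semicontinuity of the quasiconvex density (Remark \ref{rem:bulk}(b)), applying \cite[Theorem 5.29]{AFP} to $W+\eps|\nabla\n|^2$ for $\eps$ small, exactly as in Theorem \ref{th:th_opt_director}, to get $\int_\Om W(\n,\nabla\n)\,dx\le\liminf_{k\to+\infty}\int_\Om W(\n_k,\nabla\n_k)\,dx$. Combining the two contributions shows that $\n$ attains the infimum in \eqref{eq:Etottg}, which proves the claim.

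No genuinely new difficulty arises beyond Theorem \ref{th:th_opt_director}: the one point to check is the stability of the linear tangential constraint under the limit, which is immediate from the strong $L^2$ trace convergence furnished by Proposition \ref{prop:hs}(d). I note in passing that here the hypothesis on $\lambda$ is unnecessary, since the $\lambda$-dependent part of $g$ vanishes identically on tangential directors; only $\gamma>0$ is needed, and indeed even the continuity of the surface energy along the sequence is trivial, since that energy is constant on $\Nstg(\Om,\Gamma)$.
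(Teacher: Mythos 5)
Your proof is correct and follows essentially the same route as the paper: the paper's own (one-line) argument is precisely to rerun the direct method of Theorem \ref{th:th_opt_director} and to use the strong $L^2$ convergence of the traces from Proposition \ref{prop:hs}(d) to conclude that the weak limit stays in $\Nstg(\Om,\Gamma)$, which is exactly your key step. Your closing remark that only $\gamma>0$ matters (the $\lambda$-dependent part of $g$ vanishing on tangential directors) is also consistent with the theorem's hypotheses and with the paper's observation that the surface energy reduces to $\gamma[\hdue(\partial^*\Om)+2\hdue(\Gamma)]$ on this class.
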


\begin{proof}
The proof follows the strategy outlined in Theorem \ref{th:th_opt_director}: the strong convergence of the traces on $\partial^*\Om \cup \Gamma$ for a minimizing sequence in $\Nstg(\Om,\Gamma)$ guarantees that the limit function also belongs to $\Nstg(\Om,\Gamma)$. 
\end{proof}

Let $D\subseteq \R^3$ be an open bounded set with $0<m\le |D|$. We use the notation $\As_m(D)$ from Section \ref{sec:mainpf}
to denote the family of configurations in $\As(\R^3)$ that are contained in $D$ and have volume $m$. Similarly, let $\Astg_m(D)$ denote the subfamily of $\As_m(D)$ consisting of the admissible configurations that satisfy tangential boundary conditions.
\par
The minimization of $\tilde\freetg$ over $\Astg_m(D)$ is well posed and can be obtain as the limiting case of the minimization of the standard free energy over $\As_m(D)$ when the parameter $\lambda$ of the surface energy in \eqref{eq:gamma_a} approaches $+\infty$. To state this result properly, let us denote the total free energy of a configuration in $\As_m(D)$ by $\tilde \free^{\lambda}$, explicitly indicating its dependence on $\lambda$. 

\begin{theorem}
\label{thm:maintg}
Let $D\subset\R^3$ be an open bounded set, and let $0<m< |D|$. Assume that Ericksen's inequalities \eqref{eq:eineq} hold and let the constant of the surface energy satisfy $\gamma>0$. Furthermore, let $\Astg_m(D)\not=\emptyset$. Then the minimum problem
\begin{equation}
\label{eq:mainpbtg}
\min_{(\Om,\Gamma)\in \As_m^{tg}(D)} \tilde\freetg(\Om,\Gamma)
\end{equation}
is well posed. Moreover,
\begin{equation}
\label{eq:reg-eq-tg}
\min_{\Astg_m(D)} \tilde\freetg=\lim_{\lambda\to +\infty}\inf_{\As_m(D)} \tilde\free^\lambda.
\end{equation}
\end{theorem}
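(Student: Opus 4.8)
The plan is to prove both claims by the direct method, with the whole argument hinging on a single device: the density $\varphi(\bm v,\bm p):=|\bm v\cdot\bm p|$, which is nonnegative, continuous, even, convex, and positively one-homogeneous in $\bm p$, and is therefore an admissible test density in the lower semicontinuity inequality \eqref{eq:superficie}. This is what will let the tangential constraint survive the limit even though the geometries vary and, consequently, no strong trace convergence across the moving interfaces is available (Proposition \ref{prop:hs}(d) applies only for a fixed geometry). To prove well-posedness of \eqref{eq:mainpbtg}, I would take a minimizing sequence $(\Om_k,\Gamma_k)\in\Astg_m(D)$ with directors $\n_k\in\Nstg(\Om_k,\Gamma_k)$ realizing $\tilde\freetg(\Om_k,\Gamma_k)$. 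Since for tangential directors the surface energy is purely geometric, $\gamma[\hdue(\partial^*\Om_k)+2\hdue(\Gamma_k)]\le C$ gives the perimeter bound $\hdue(\partial^*\Om_k\cup\Gamma_k)\le C$, while the bulk bound together with the lower estimate in \eqref{eq:lower_upper_W_chol} (recall $|\n_k|=1$) gives $\|\nabla\n_k\|_2\le C$. As $\Om_k\subset D$ with bounded perimeter, up to a subsequence $1_{\Om_k}\to 1_\Om$ in $L^1$, so $|\Om|=m$ and $\Om\subset D$, and Theorem \ref{thm:lsc} furnishes $(\Om,\Gamma)\in\As(\R^3)$ and $\n\in\Ns(\Om,\Gamma)$ with $\n_k\to\n$ in $L^1$ and $\nabla\n_k\weak\nabla\n$ in $L^2$. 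Applying \eqref{eq:superficie} with $\varphi=|\bm v\cdot\bm p|$, and noting that the right-hand side vanishes identically because each $\n_k$ is tangential, forces
\[
\int_{\partial^*\Om}|\n\cdot\normal|\,d\hdue+\int_{\Gamma}[|\n^+\cdot\normal|+|\n^-\cdot\normal|]\,d\hdue=0,
\]
hence $\n\in\Nstg(\Om,\Gamma)$ and $(\Om,\Gamma)\in\Astg_m(D)$. The quasiconvex bulk lower semicontinuity (as in Theorem \ref{th:th_opt_director} via Remark \ref{rem:bulk}) together with \eqref{eq:superficie} applied to $\varphi=|\bm p|$ (which controls the geometric surface term) then shows $(\Om,\Gamma)$ is a minimizer.

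For the limiting formula \eqref{eq:reg-eq-tg}, set $m_\lambda:=\inf_{\As_m(D)}\tilde\free^\lambda$. Since $\gamma\lambda(\n\cdot\normal)^2\ge0$ is increasing in $\lambda>0$, the map $\lambda\mapsto m_\lambda$ is monotone, so $\lim_{\lambda\to+\infty}m_\lambda$ exists. The inequality $\lim_\lambda m_\lambda\le\min_{\Astg_m(D)}\tilde\freetg$ is immediate: for a tangential configuration the $\lambda$-terms vanish on an optimal tangential director, so $\tilde\free^\lambda(\Om,\Gamma)\le\tilde\freetg(\Om,\Gamma)$, and taking infima gives $m_\lambda\le\min_{\Astg_m(D)}\tilde\freetg$ for every $\lambda$. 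For the reverse inequality I would fix $\lambda_j\to+\infty$ and pick near-minimizers $(\Om_j,\Gamma_j,\n_j)$ with $\tilde\free^{\lambda_j}[(\Om_j,\Gamma_j),\n_j]\le m_{\lambda_j}+\tfrac1j\le\min_{\Astg_m(D)}\tilde\freetg+1$. As every summand of the energy is nonnegative, the geometric and bulk bounds follow as before, and in addition the penalized anchoring obeys
\[
A_j:=\int_{\partial^*\Om_j}(\n_j\cdot\normal_j)^2\,d\hdue+\int_{\Gamma_j}[(\n_j^+\cdot\normal_j)^2+(\n_j^-\cdot\normal_j)^2]\,d\hdue\le\frac{C}{\lambda_j}\to0.
\]
After extracting a limit configuration via Theorem \ref{thm:lsc}, I would use \eqref{eq:superficie} with $\varphi=|\bm v\cdot\bm p|$, now combined with Cauchy--Schwarz on the right-hand side, e.g. $\int_{\partial^*\Om_j}|\n_j\cdot\normal_j|\,d\hdue\le A_j^{1/2}\,\hdue(\partial^*\Om_j)^{1/2}\to0$ (and likewise on $\Gamma_j$), so the limit $\n$ is tangential and $(\Om,\Gamma)\in\Astg_m(D)$. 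Discarding the nonnegative penalization $\gamma\lambda_j A_j$ and invoking the bulk and the geometric ($\varphi=|\bm p|$) lower semicontinuity inequalities yields $\tilde\freetg(\Om,\Gamma)\le\liminf_j\tilde\free^{\lambda_j}[(\Om_j,\Gamma_j),\n_j]=\lim_\lambda m_\lambda$, hence $\min_{\Astg_m(D)}\tilde\freetg\le\lim_\lambda m_\lambda$. Combining the two bounds proves \eqref{eq:reg-eq-tg}.

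The hard part is exactly the transfer of the pointwise tangential constraint through a sequence of genuinely different geometries, where no trace convergence is at hand. The resolution is to recognize that the natural quadratic anchoring density $(\bm v\cdot\bm p)^2/|\bm p|$ is \emph{not} convex for large anchoring (this is precisely the reason $\lambda\le1$ enters Lemma \ref{lem:glambda}), so it cannot be fed into \eqref{eq:superficie}; replacing it by the convex, one-homogeneous surrogate $|\bm v\cdot\bm p|$ makes the lower semicontinuity applicable, and a Cauchy--Schwarz estimate converts the $L^2$-smallness of the anchoring (forced by the divergent penalty $\lambda_j$) into the $L^1$-vanishing that \eqref{eq:superficie} detects. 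Beyond this, I would only need to check the routine points that $|\Om|=m$ and $\Om\subset D$ are stable under $L^1$-convergence of the indicators, and that the monotonicity of $m_\lambda$ legitimizes reducing the full limit to a sequence $\lambda_j\to+\infty$.
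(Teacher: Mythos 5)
Your overall architecture coincides with the paper's proof: direct method with perimeter and gradient bounds, passage to a limit configuration via Theorem \ref{thm:lsc}, transfer of the tangential constraint through the moving geometries by testing \eqref{eq:superficie} with $\varphi(\bm v,\bm p)=|\bm v\cdot\bm p|$, and a two-sided comparison for \eqref{eq:reg-eq-tg}. Your monotonicity of $\lambda\mapsto m_\lambda$ and the Cauchy--Schwarz estimate $\int_{\partial^*\Om_j}|\n_j\cdot\normal_j|\,d\hdue\le A_j^{1/2}\hdue(\partial^*\Om_j)^{1/2}$ are harmless variants of the paper's devices (the paper instead uses the pointwise bound $\alpha|t|\le 1+\lambda_k t^2$, valid once $\lambda_k\ge\alpha^2/4$, and lets $\alpha\to+\infty$).

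There is, however, one concrete step that fails as written: the claim that $\varphi(\bm v,\bm p)=|\bm p|$ ``controls the geometric surface term''. In \eqref{eq:superficie} the integrand is the \emph{sum of both traces}, $\varphi(\n^+,\normal)+\varphi(\n^-,\normal)$, also on $\partial^*\Om$, where one trace is the inner one (of modulus $1$) and the other is the outer one (equal to $0$, since $\n=0$ a.e. on $\Om^c$). A density independent of $\bm v$ charges the zero outer trace as well, so with $\varphi=|\bm p|$ both sides of \eqref{eq:superficie} become $2\hdue(\partial^*\cdot)+2\hdue(\Gamma_\cdot)$, and what you actually obtain is lower semicontinuity of $\hdue(\partial^*\Om)+\hdue(\Gamma)$, not of the tangential surface energy $\gamma[\hdue(\partial^*\Om)+2\hdue(\Gamma)]$. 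These are genuinely different: writing $P$ for $\hdue(\partial^*\Om)$ and $G$ for $\hdue(\Gamma)$, lower semicontinuity of $P+G$ (even combined with that of $P$ alone) does not imply lower semicontinuity of $P+2G$ --- numerically, $P_k=2$, $G_k=0$, $P=1$, $G=1$ satisfies the first two inequalities and violates the third --- and the geometric mechanism behind this is exactly what the factor $2$ is designed for, namely pieces of $\partial^*\Om_k$ collapsing onto an inner crack of the limit, where they must be counted twice. The fix is the one the paper uses: test with $\varphi(\bm v,\bm p)=|\bm v|\,|\bm p|$ (equivalently, Lemma \ref{lem:glambda} with $\lambda=0$, i.e. $\hat g(\bm v,\bm p)=\gamma|\bm v|^2|\bm p|$). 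Since the traces have modulus $1$ from inside $\Om$ and $0$ from outside, this density weights $\partial^*\Om$ once and $\Gamma$ twice on both sides of \eqref{eq:superficie}, yielding exactly $\hdue(\partial^*\Om)+2\hdue(\Gamma)\le\liminf_k\,[\hdue(\partial^*\Om_k)+2\hdue(\Gamma_k)]$. With this single replacement, made in both the well-posedness step and the reverse inequality for \eqref{eq:reg-eq-tg}, your proof is complete and essentially identical to the paper's.
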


\begin{proof}
The methodology used here is closely aligned with that used in the proof of Theorem \ref{thm:main}. Let $(\Om_k,\Gamma_k)_{k\in\N}$ be a minimizing sequence for the problem in $\Astg_m(D)$. We can easily obtain the bound 
\begin{equation}
\label{eq:boundper2tg}
\hdue(\partial^*\Om_k\cup \Gamma_k)\le C
\end{equation}
for some constant $C$ independent of $k$. Hence, there exists a set $\Om\subset D$ of finite perimeter such that, up to a subsequence,
\begin{equation*}
\label{eq:Omega_kconvergencetg}
1_{\Om_k}\to 1_\Om\qquad\text{strongly in }L^1(\R^3).
\end{equation*}
Let $\Gamma\subset \Om^1$ be given by Theorem \ref{thm:lsc}, so that $(\Om,\Gamma)\in \As_m(D)$. 
According to Theorem \ref{th:th_opt_directortg}, let $\n_k\in \Nstg(\Om_k,\Gamma_k)$ be such that
$$
\tilde\free(\Om_k,\Gamma_k)=\tilde\free[(\Om_k,\Gamma_k),\n_k].
$$
Using the estimates on the bulk energy given by Remark \ref{rem:bulk} and the bound \eqref{eq:boundper2tg} we obtain $\|\nabla \n_k\|_2^2\le C$ for some $C>0$ independent of $k$. Let $\n\in \ns(\Om,\Gamma)$ be the limit function of $(\n_k)_{k\in\N}$ (up to subsequences) given by Theorem \ref{thm:lsc}. By lower semicontinuity of bulk energies (see \cite[Theorem 5.12]{AFP}) we have
\begin{multline*}
\int_{\Om}W(\n,\nabla \n)\,dx=\int_{D}W(\n,\nabla \n)\,dx\\
\le \liminf_{k\to+\infty}\int_{D}W(\n_k,\nabla \n_k)\,dx=\liminf_{k\to+\infty}\int_{\Om_k}W(\n_k,\nabla \n_k)\,dx.
\end{multline*}
By Lemma \ref{lem:glambda} and the lower semicontinuity result in Theorem \ref{thm:lsc} for $\lambda=0$, we obtain
\begin{equation*}
\hdue(\partial^*\Om)+2\hdue(\Gamma)\le \liminf_{k\to+\infty}
\left[
\hdue(\partial^*\Om_k)+2\hdue(\Gamma_k)
\right].
\end{equation*}
To prove that $(\Om,\Gamma)$ is a effectively a minimum for \eqref{eq:mainpbtg}, it suffices to show that $\n\in \Nstg(\Om,\Gamma)$, which implies that $(\Om,\Gamma)\in \Astg_m(D)$. Applying again the lower semicontinuity result from Theorem \ref{thm:lsc} for $\lambda=0$ to the function $\varphi(\bm v,\bm p):=|\bm v\cdot \bm p|$ we deduce that
\begin{multline*}
\int_{\partial^*\Om}|\n\cdot \normal|\,d\hdue+\int_{\Gamma}|[\n^+\cdot \normal|+|\n^-\cdot\normal|]\,d\hdue\\
\le
\liminf_{k\to+\infty}
\left[\int_{\partial^*\Om_k}|\n_k\cdot \normal_k|\,d\hdue+\int_{\Gamma_k}|[\n_k^+\cdot \normal_k|+|\n_k^-\cdot\normal_k|]\,d\hdue\right]=0.
\end{multline*}
This inequality yields $\n\cdot \normal=0$ and $\n^\pm\cdot \normal=0$ $\hdue$-a.e. on $\partial^*\Om$ and $\Gamma$, respectively. Therefore, $\n\in \Nstg(\Om,\Gamma)$.
\par
Let us address \eqref{eq:reg-eq-tg}. Since $\tilde\free^\lambda(\Om,\Gamma)\le \tilde\freetg(\Om,\Gamma)$ for $(\Om,\Gamma)\in \Astg_m(D)$, we have
$$
\limsup_{\lambda\to +\infty}\inf_{\As_m(D)} \tilde\free^\lambda\le \min_{\Astg_m(D)} \tilde\freetg.
$$
To complete the proof, it suffices to show that
\begin{equation}
\label{eq:claim-inf}
\min_{\Astg_m(D)} \tilde\freetg\le \liminf_{\lambda\to +\infty}\inf_{\As_m(D)} \tilde\free^\lambda.
\end{equation}
Consider a monotone sequence $(\lambda_k)_{k\in\N}$ with $\lambda_k\to +\infty$, and a sequence $(\Om_k,\Gamma_k)_{k\in\N}$ in $\As_m(D)$ such that
$$
\tilde\free^{\lambda}(\Om_k,\Gamma_k)\le \inf_{\As_m(D)} \tilde\free^\lambda+\frac{1}{k}\le \min_{\Astg_m(D)} \tilde\freetg+\frac{1}{k}.
$$
Notice that
$$
\hdue(\partial^*\Om_k)+\hdue(\Gamma_k)\le C
$$
for some constant $C$ independent of $k$.
Let $(\Om,\Gamma)\in \As_m(D)$ be the limit configuration, up to a subsequence, associated to $(\Om_k,\Gamma_k)$ according to Theorem \ref{thm:lsc}.
Let $\n_k\in \Ns(\Om_k,\Gamma_k)$ be a director which realizes $\tilde\free^{\lambda_k}(\Om_k,\Gamma_k)$ (see Theorem \ref{th:th_opt_director}), and let $\n\in \ns(\Om,\Gamma)$ be the associated limit, up to a subsequence. We claim that
\begin{equation}
\label{eq:claimtg}
\n\in \Nstg(\Om,\Gamma).
\end{equation}
Indeed, by the lower semicontinuity result in Theorem \ref{thm:lsc}, \eqref{eq:boundProb}, we have for every $\alpha>0$,
\begin{multline*}
\alpha\left[\int_{\partial^*\Om}|\n\cdot \normal|\,d\hdue+\int_{\Gamma}|[\n^+\cdot \normal|+|\n^-\cdot\normal|]\,d\hdue\right]\\
\le
\liminf_{k\to+\infty}
\left[\int_{\partial^*\Om_k}\alpha|\n_k\cdot \normal_k|\,d\hdue+\int_{\Gamma_k}\alpha|[\n_k^+\cdot \normal_k|+|\n_k^-\cdot\normal_k|]\,d\hdue\right]\\
\le
\liminf_{k\to+\infty}
\left[\int_{\partial^*\Om_k}[1+\lambda_k(\n_k\cdot \normal_k)^2]\,d\hdue+\int_{\Gamma_k}[2+\lambda_k(\n_k^+\cdot \normal_k)^2+\lambda_k(\n_k^-\cdot\normal_k)^2]\,d\hdue\right]\le C,
\end{multline*}
so that, letting $\alpha\to+\infty$, we deduce
$$
\int_{\partial^*\Om}|\n\cdot \normal|\,d\hdue+\int_{\Gamma}|[\n^+\cdot \normal|+|\n^-\cdot\normal|]\,d\hdue=0,
$$
and claim \eqref{eq:claimtg} follows. Consequently, $(\Om,\Gamma)\in \Astg_m(D)$. By the lower semicontinuity of quasiconvex bulk energies in SBV (c.f. \cite[Theorem  5.29]{AFP}) applied our $\free_\mathrm{b}$ in \eqref{eq:Eb}, we obtain
$$
\int_{\Om}W(\n,\nabla \n)\,dx\le \liminf_{k\to+\infty}\int_{\Om_k}W(\n_k,\nabla \n_k)\,dx.
$$
Additionally, by Lemma 5.1 and applying the lower semicontinuity result from Theorem \ref{thm:lsc} for $\lambda=0$ to the function $\varphi(\bm v,\bm p):=|\bm v\cdot \bm p|$, we have
$$
\hdue(\partial^*\Om)+2\hdue(\Gamma)\le
\liminf_{k\to+\infty}\left[\hdue(\partial^*\Om_k)+\hdue(\Gamma_k)\right].
$$
Hence, we deduce
$$
\tilde\freetg(\Om,\Gamma)\le \liminf_{k\to+\infty}\tilde\free^{\lambda_k}(\Om_k,\Gamma_k)
$$
which implies that \eqref{eq:claim-inf} follows.
\end{proof}

\begin{remark}
\label{rem:lambda-tg}
{\rm
The approximation through smooth sets of the domain $\Omega$ proves difficult under tangential boundary conditions, because the methods from Section \ref{sec:density} cannot be directly adapted to handle the constraint $\n\cdot\normal=0$ on the boundary. However, the density result in \eqref{eq:reg-eq} for $\tilde\free^\lambda$ is preserved in the limit as $\lambda\to +\infty$.
}
\end{remark}

\paragraph*{\bf Acknowledgements}  
A.G. acknowledges support by PRIN2022 n. 2022J4FYNJ funded by MUR, Italy, and by the European Union - Next Generation EU.
He is also member of the Gruppo Nazionale per l’Analisi Matematica, la Probabilit\`a e le loro Applicazioni (GNAMPA) of the Istituto Nazionale di Alta Matematica (INdAM). \\
S.P. is member of the Italian Gruppo Nazionale per la Fisica Matematica, which is part of INdAM.
\par
This manuscript has no associated data.

\bibliographystyle{plain}
\bibliography{agbiblio}

\end{document}